\newcommand{\filename}{Mikhailov-JMAA2010-6.tex}
\newtheorem{lemma}{LEMMA}[section]
\newtheorem{theorem}[lemma]{THEOREM}
\newtheorem{definition}[lemma]{DEFINITION}
\newtheorem{corollary}[lemma]{COROLLARY}
\newtheorem{remark}[lemma]{REMARK}
\def\supp{\mathop{\mbox{\rm supp}}\nolimits}
\newcommand{\be}{\begin{equation}}
\newcommand{\ee}{\end{equation}}
\newcommand{\bes}{\begin{equation*}}
\newcommand{\ees}{\end{equation*}}
\newcommand{\bea}{\begin{eqnarray}}
\newcommand{\eea}{\end{eqnarray}}
\newcommand{\beas}{\begin{eqnarray*}}
\newcommand{\eeas}{\end{eqnarray*}}
 \newcommand{\nc}{\newcommand}
 \nc{\ha}{\frac{1}{2}}
 \nc{\tha}{\frac{3}{2}}
 \nc{\s}{\widetilde}
 \nc{\ov}{\overline}
 \nc{\pa}{\partial}
 \nc{\pO}{{\partial\Omega}}
\newcommand{\RE}{\text{Re}}
\nc{\C}{{\mathbb{C}}}
\newcommand{\D}{{\cal D}}
\newcommand{\E}{{\cal E}}
\newcommand{\F}{{\cal F}}
\newcommand{\J}{{\cal J}}
 \nc{\N}{{\mathbb{N}}}
\renewcommand{\P}{{\cal P}}
\newcommand{\R}{{\mathds R}}
\newcommand{\V}{{\cal V}}
\newcommand{\ve}{{\varepsilon}}
\makeatletter \@addtoreset{equation}{section}\makeatother
\renewcommand{\@oddhead}{\vbox{\hbox to\textwidth{\scriptsize %
 JMAA, {\bf 378}, 2011, 324-342\hfill S.E.Mikhailov
 }\hrule
 }}
 \renewcommand{\@evenhead}{\vbox{\hbox to\textwidth{\scriptsize %
 \filename\hfill\today
 }\hrule
 }}
 \numberwithin{equation}{section}
\begin{document}

\title 
{\bf Traces, Extensions, Co-normal Derivatives and Solution Regularity of Elliptic Systems with Smooth and Non-smooth Coefficients}

\author
{Sergey E. Mikhailov\\
     Brunel University London,
     Department of Mathematics,\\
     Uxbridge, UB8 3PH, UK}

\date{}

 \maketitle
 
\chapter{Traces, extensions and co-normal derivatives for elliptic systems on Lipschitz domains}

\begin{center}{Sergey E. Mikhailov\footnote{Corresponding author:
e-mail: {\sf sergey.mikhailov@brunel.ac.uk}, Phone: +44\,189\,267361,
     Fax: +44\,189\,5269732}\\
     Brunel University London,
     Department of Mathematics,\\
     Uxbridge, UB8 3PH, UK}
\end{center}

 \noindent{Published in: {\em J. Math. Analysis Appl.} {\bf 378}, 2011, 324-342 
 \hfill\phantom{*} 
}\\

{\bf Abstract}\\
For functions from the Sobolev space $H^s(\Omega)$, $\ha<s<\tha$,
definitions of non-unique generalized  and unique canonical
co-normal derivative are considered, which are related to possible
extensions of a partial differential operator and its right hand
side from the domain $\Omega$, where they are prescribed, to the
domain boundary, where they are not. Revision of  the boundary value problem settings, which makes them insensitive to the generalized co-normal derivative inherent non-uniqueness are given. It is shown, that the canonical co-normal derivatives, although defined on a more narrow function class than the generalized ones, are continuous extensions of the classical co-normal derivatives. Some new results about trace operator estimates and Sobolev spaces characterizations, are also presented.

\noindent{\bf Keywords}. Partial differential equation systems,  Sobolev spaces, Classical, generalized and canonical co-normal derivatives, Weak BVP settings.


\section{Introduction}

While considering a second order partial differential equation
for a function from the Sobolev space $H^{s}(\Omega)$,
$\ha<s<\tha$, with a  right-hand side from $H^{s-2}(\Omega)$, the {\em strong}  co-normal derivative of $u$ defined on  the boundary  in the trace sense, does not generally exist. Instead, a
{\em generalized}  co-normal derivative operator can be defined by the first Green identity. However this
definition is related to an extension of the PDE operator and its right
hand side from the domain $\Omega$, where they are prescribed, to
the domain boundary, where they are not. Since the extensions are
non-unique, the generalized co-normal derivative operator appears to be
non-unique and non-linear unless a
linear relation between the PDE solution and the extension of its right hand side
is enforced. This leads to the need of a revision of  the boundary value problem settings, which makes them insensitive to the co-normal derivative inherent non-uniqueness.
For functions $u$ from a subspace of $H^{s}(\Omega)$, $\ha<s<\tha$,
which can be mapped by the PDE operator into the space
$\widetilde{H}^{t}(\Omega)$, $t\ge -\ha$,  one can still define a {\em
canonical} co-normal derivative, which is unique, linear
in $u$ and coincides with the co-normal derivative in the trace
sense if the latter does exist.

These notions were developed, to some extent, in \cite{Liverpool2005UKBIM,
MikMMAS2006} for a PDE with an infinitely smooth coefficient on a
domain with an infinitely smooth boundary, and a right hand side
from ${H}^{s-2}(\Omega)$, $1\le s<\tha$, or extendable to
$\widetilde{H}^{t}(\Omega)$, $t\ge -1/2$. In \cite{MikhailovIMSE06}  the
analysis was generalized to the co-normal derivative operators
for some scalar PDE with a H\"older coefficient and right hand side
from ${H}^{s-2}(\Omega)$, $\ha<s<\tha$, on a bounded Lipschitz
domain $\Omega$.

In this paper updating \cite{MikhailovArXiv2009}, we extend the previous results on the co-normal derivatives to strongly elliptic second order PDE systems on bounded or unbounded Lipschitz domains with infinitely smooth  coefficients, with complete proofs. We also give the week BVP settings invariant to the generalized co-normal derivatives non-uniqueness.  To obtain these results, some new facts about trace operator estimates and Sobolev spaces characterizations are also proved in the paper.

The paper is arranged as follows.  Section~\ref{S2} provides a number of
auxiliary facts on Sobolev spaces, traces and extensions, some of which might
be new for Lipschitz domains. Particularly, we proved Lemma
\ref{tauunb} on two-side estimates of the trace operator, Lemma
\ref{LipExt} on boundedness of extension operators from boundary
to the domain for a wider range of spaces, Theorem \ref{McL3.40m-2} on characterization of the
Sobolev space $H_0^s(\Omega)=\s{H}^s(\Omega)$ on the (larger than
usual) interval $\ha<s<\tha$,  Theorem \ref{H_F=0} on
characterization of the space $H^t_{\pO}$, $t>-\tha$, Theorem~\ref{H0=H} on equivalence of $H_0^s(\Omega)$ and ${H}^s(\Omega)$ for $s\le\ha$, Theorem~\ref{Trasenonex} on non-existence of the trace operator, Lemma~\ref{ExtH} and  Theorem~\ref{ExtOper} on extension of
$H^s(\Omega)$ to $\s{H}^s(\Omega)$ for all $s<\ha$, $s\not=\ha-k$.

The results of Section~\ref{S2} are  applied
in Section \ref{S3sm} to
introduce and analyze the generalized and canonical co-normal derivative operators on bounded and unbounded Lipschitz
domains, associated with strongly elliptic systems of second order PDEs with infinitely smooth coefficients and right hand side
from ${H}^{s-2}(\Omega)$, $\ha<s<\tha$. The weak settings of Dirichlet, Neumann and mixed problems (revised versions for the latter two) are considered and it is shown that they are well posed in spite of the inherent non-uniqueness of the generalized co-normal derivatives. It is proved that the canonical co-normal derivative coincides with the classical (strong) one for the cases when they both do exist.

The results of Section~\ref{S3sm} are generalized to H\"older-Lipschitz coefficients in \cite{JCE-H}, see also \cite{MikhailovArXiv2009}.

%
\section{Sobolev spaces, trace operators and extensions}\label{S2}
\subsection{Notations}\label{S2.1}
Suppose $\Omega=\Omega^+$ is a bounded or unbounded open domain of
$\R^n$, which boundary $\pa \Omega$ is a simply connected, closed,
Lipschitz $(n-1)-$dimensional set. Let $\ov{\Omega}$ denote the closure of $\Omega$ and $\Omega^-=\R^n\backslash \ov{\Omega}$ its complement. In what follows  ${\cal
D}(\Omega)=C^\infty_{comp}(\Omega)$ denotes the space of Schwartz
test functions, and ${\cal D}^*(\Omega)$ denotes the space of
Schwartz distributions; $ H^s(\R^n)= H^s_2(\R^n)$, $
H^s(\pO)=H^s_2(\pO)$ are the Sobolev  (Bessel potential) spaces, where $s\in
\R$ is an arbitrary real number (see, e.g., \cite{LiMa1}).

 We denote by $\s{H}^s (\Omega)$ the closure of $\D(\Omega)$ in  $ H^s(\R^n)$, which can be characterized as
 $
\s{H}^s (\Omega)=\{g:\;g\in H^s  (\R^n),\; \supp \,g
\subset\ov{\Omega}\}
 $, see e.g. \cite[Theorem 3.29]{McLean2000}.
The space ${H}^s (\Omega)$ consists of restrictions on
$\Omega$ of distributions  from ${H}^s  (\R^n)$,
 ${H}^s (\Omega):=\{g|_{_{\Omega}}:\;g\in{H}^s (\R^n)\}$,
and $H_0^s(\Omega)$ is closure of ${\cal D}(\Omega)$ in
$H^s(\Omega)$. We recall that $H^s(\Omega)$ coincide with the
Sobolev--Slobodetski spaces $W^s_2(\Omega)$ for any non-negative $s$.
We denote ${H}^s_{loc}(\Omega):=\{g : \varphi g\in{H}^s(\Omega)\ \forall\varphi\in\D(\Omega)\}$. For infinite (unbounded) domains $\Omega$ we will use also the notation $H^{s}_{loc}(\ov\Omega):=\{g : \varphi g\in{H}^s(\Omega)\ \forall\varphi\in\D(\ov\Omega)\}$ (for bounded domains $H^{s}_{loc}(\ov\Omega)=H^{s}(\Omega)$).

Note that distributions from ${H}^s (\Omega)$ and $H_0^s(\Omega)$ are defined only in $\Omega$, while distributions from $\s{H}^s (\Omega)$ are defined in $\R^n$ and particularly on the boundary $\pO$. For  $s\ge 0$, we can identify $\s{H}^s (\Omega)$ with the subset of functions from ${H}^s (\Omega)$, whose extensions by zero outside $\Omega$ belong to ${H}^s (\R^n)$, cf. \cite[Theorem 3.33]{McLean2000}, i.e., identify functions $u\in \s{H}^s (\Omega)$ with their restrictions, $u|_\Omega\in {H}^s (\Omega)$. However generally we will distinguish distributions $u\in\s{H}^s (\Omega)$ and $u|_\Omega\in {H}^s (\Omega)$, especially for $s\le-\ha$.

We denote by ${H}^s_{_\pO}$ the subspace of ${H}^s (\R^n)$ (and of
$\s{H}^s (\Omega)$), which elements are supported on $\pO$, i.e.,
 $
{H}^s_{_\pO}:=\{g:\;g\in H^s  (\R^n),\; \supp \,g \subset{\pO}\}.
 $
To simplify notations for vector-valued functions, $u:\Omega\to\C^m$, we will often write $u\in H^s(\Omega)$ instead of $u\in H^s(\Omega)^m=H^s(\Omega;\C^m)$, etc.

As usual (see e.g. \cite{LiMa1, McLean2000}), for two  elements from dual complex Sobolev spaces the bilinear dual product $\langle \cdot,\cdot\rangle_\Omega$ associated with the sesquilinear inner product  $(\cdot,\cdot)_\Omega:=(\cdot,\cdot)_{L_2(\Omega)}$ in $L_2(\Omega)$ is defined as
 \be
\langle u,v\rangle_{\R^n}:=\int_{\R^n} [\F^{-1} u](\xi)[\F v](\xi)d\xi=:(\F{\bar{u}},{\F v})_{\R^n}=:(\bar{u}, v)_{\R^n},
\quad
u\in H^s(\R^n),\ v\in H^{-s}(\R^n),\label{dp}
 \ee
 \bea
\langle u,v\rangle_{\Omega}&:=&\langle u,V\rangle_{\R^n}=:(\bar{u}, v)_{\Omega}\ \text{if}\ u\in \s H^s(\R^n),\ v\in H^{-s}(\Omega),\ v=V|_\Omega \text{ with } V\in H^{-s}(\R^n),\qquad\nonumber\\
\langle u,v\rangle_{\Omega}&:=&\langle U,v\rangle_{\R^n}=:(\bar{u},{v})_{\Omega}\ \text{if}\ u\in H^s(\R^n),\ v\in\s H^{-s}(\Omega),\ u=U|_\Omega \text{ with } U\in H^{s}(\R^n)\label{dp3}
 \eea
for $s\in\R$, where $\bar g$ is the complex conjugate of $g$, while $\F$ and $\F^{-1}$ are the distributional  Fourier transform operator and its inverse, respectively, that for integrable functions take form
 $$
 \hat g(\xi)=[\F g](\xi):=\int_{\R^n}e^{-2\pi i x\cdot\xi}g(x)dx,\quad
 g(x)=[\F^{-1} \hat g](x):=\int_{\R^n}e^{2\pi i x\cdot\xi}\hat g(\xi)d\xi .
 $$
For  vector-valued elements $u\in H^s(\R^n)^m$, $v\in H^{-s}(\R^n)^m$, $s\in\R$,  definition  \eqref{dp} should be understood as
 \bes
\langle u,v\rangle_{\R^n}:=\int_{\R^n} {\hat u(\xi)}\cdot\hat v(\xi)d\xi=\int_{\R^n} {\hat u(\xi)}^\top\hat v(\xi)d\xi=:(\ov{\hat u},{\hat v})_{\R^n}=:(\bar{u},{v})_{\R^n},  
 \ees
where $\hat u\cdot\hat v=\hat u^\top\hat v=\sum_{k=1}^m\hat u_k\hat v_k$ is the scalar product of two vectors.

Let
$\mathcal{J}^s$ be the Bessel potential operator defined as
$$[\J^sg](x)=\F^{-1}_{\xi\to x}\{(1+|\xi|^2)^{s/2}\hat g(\xi)\}.$$
The inner product  in $H^s(\Omega)$, $s\in\R$,  is defined as follows,
\bea
 \label{ipRn}
&&\hspace{-2em}(u,v)_{H^s(\R^n)}:=(\J^s u, \J^s v)_{\R^n}=
\int_{\R^n}(1+\xi^2)^s\ov{\hat u(\xi)}{\hat v}(\xi) d\xi
=
\left\langle {\ov u},\J^{2s}{v}\right\rangle_{\R^n},\quad u,v\in H^s(\R^n),
\qquad\qquad
 \\
 \nonumber
&&\hspace{-2em}(u,v)_{H^s(\Omega)}:=\left((I-P)U,(I-P)V\right)_{H^s(\R^n)},\quad
u=U|_\Omega,\ v=V|_\Omega,\quad U,V\in H^s(\R^n).\qquad\qquad
\eea
Here $P: H^s(\R^n)\to \s H^s(\R^n\backslash \bar \Omega)$ is the orthogonal projector, see e.g. \cite[p. 77]{McLean2000}.

For a general Lipschitz domain $\Omega$, let $\{\omega_j\}_{j=1}^J\subset\R^n$  be a finite open cover of $\pO$ and $\{\varphi_j(x)\in\D(\omega_j)\}_{j=1}^J$ be a partition of unity subordinate to it, $\sum_{j=1}^J\varphi_j(x)=1$ for any $x\in\pO$. For any $j$ there exists a half-space domain $\Omega_j$ such that  $\omega_j\bigcap\Omega_j=\omega_j\bigcap\Omega$ and $\Omega_j$ can be linearly transformed  by a rigid translation $\kappa_j$ to a Lipschitz hypograph $\tilde\Omega_j=\{x'\in\R^{n-1} : x_n>\zeta_j(x')\}$, where $\zeta_j$ are some uniformly
Lipschitz functions. Let also $ \varkappa_j: \R^n \to \R^n$ be the  Lipschitz-smooth invertible functions (evidently related to $\zeta_j$ and $\kappa_j$) such that $\R^n_+\ni x\mapsto \varkappa_j(x)\in\Omega_j$, while $D_j(x')$ are the Jacobians of the corresponding boundary mappings $\R^{n-1}\ni x'\mapsto \varkappa_j(x')\in\pO_j$ and $D_j\in L_\infty(\R^{n-1})$.


Similar to \cite[page 85]{Necas1967} we introduce the following definition.
\begin{definition}\label{DOktoO}
Let $\Omega_k$, $\Omega$ be Lipschitz domains. We say that $\Omega_k\to \Omega$ as $k\to\infty$ if $\pO_k$ are represented using the same system of covering charts $\omega_j$ as $\pO$ for all sufficiently large $k$, and
\be\label{LipLim}
\lim_{k\to\infty}|\zeta_{jk}-\zeta_j|_{C^{0,1}(\bar\omega_j)}=0,
 \ee
where $\zeta_{jk}$ and $\zeta_j$ are the corresponding Lipschitz functions for the boundary representation.
\end{definition}

\subsection{Sobolev spaces characterization, traces and extensions}\label{S2.2}
To introduce generalized co-normal derivatives in
Section~\ref{S3sm}, we will need several facts about traces and extensions in
Sobolev spaces on Lipschitz domain. First we give the following usual definition of the trace operator.

\begin{definition}\label{TraceDef} An operator
$\gamma^{+}: H^s(\Omega^+)\to H^{\sigma}(\pO)$ is a trace operator if for each $u\in H^s(\Omega)$ and for any sequence $\phi_k\in\D(\ov\Omega)$ converging to $u$ in $H^s(\Omega)$, the sequence of the boundary values  $\phi_k|_\pO$ converges to $\gamma^+u$ in $H^\sigma(\pO)$. The trace operator $\gamma^{-}: H^s(\Omega^-)\to H^{\sigma}(\pO)$ is defined similarly. If $\gamma^+u=\gamma^-u$ we denote them as $\gamma u$.
\end{definition}
 We have the following well-known trace theorem
\cite[Lemma 3.6]{Costabel1988}.
\begin{theorem}\label{TrTh}
If $\ha<s<\tha$, then the trace operators
 \be \label{gammacont}
 \gamma:
H^s(\R^n)\to H^{s-\ha}(\pO)\quad \mbox{and}\quad \gamma^{\pm}: H^s(\Omega^\pm)\to
H^{s-\ha}(\pO),
\ee
are continuous for any Lipschitz domain $\Omega$.
\end{theorem}
Let $\gamma^*: H^{\ha-s}(\pO)\to H^{-s}(\R^n)$ denote the operator adjoined to the trace operator,
 $$
\langle \gamma^*v, w\rangle=\langle v,\gamma w\rangle\quad\forall\ w\in H^{s}(\R^n),
\quad v\in H^{\frac{1}{2}-s}(\partial\Omega).
 $$
Now we can prove two-side estimates for the trace operator and its adjoined, which particularly imply a statement about the trace operator unboundedness (cf. \cite[Chapter 1, Theorem 9.5]{LiMa1} for  the unboundedness statements in domains with infinitely smooth boundary).

\begin{lemma}\label{tauunb}
Let $\Omega$ be a  Lipschitz domain and $\ha<s\le 1$. Then
 \be
  C'\sqrt{C_s}\|v\|_{H^{\ha-s}(\pO)}\le \|\gamma^*v\|_{H^{-s}(\R^n)}\le C''\sqrt{C_s}\|v\|_{H^{\ha-s}(\pO)}\quad \forall v\in H^{\ha-s}(\pO) \label{gsest}
 \ee
 and thus
 \be
  C'\sqrt{C_s}\le\|\gamma\|_{H^s(\R^n)\to H^{s-\ha}(\pO)}=\|\gamma^*\|_{H^{\ha-s}(\R^{n-1})\to H^{-s}(\R^n)}\le C''\sqrt{C_s},
 \label{ges}
 \ee
where
 $$C_s:=\int_{-\infty}^\infty(1+\eta^2)^{-s}d\eta,$$
$C'$ and $C''$ are positive constants independent of $s$ and $v$.
The norm of the  trace operator
$\gamma : {H^s(\R^n)\to H^{s-\ha}(\pO)}$ tends to infinity as
$s\searrow\ha$ since $C_s\to\infty$,  while the operator $\gamma: H^\ha(\R^n)\to
L_2(\pO)$, if it does exist, is unbounded.
\end{lemma}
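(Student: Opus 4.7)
The plan is to establish the two-sided estimate \eqref{gsest} for $\gamma^*$ by reducing to an explicit half-space Fourier computation, then read off \eqref{ges} from the definition of the adjoint, and finally deduce the unboundedness of $\gamma:H^\ha(\R^n)\to L_2(\pO)$ by duality from the blow-up $C_s\to\infty$ as $s\searrow\ha$.

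First I would flatten the boundary: cover $\pO$ by finitely many charts $U_j$ in which $\pO\cap U_j$ is a Lipschitz graph, choose a subordinate $C^\infty$ partition of unity $\{\phi_j\}$, and pull back by bi-Lipschitz maps $\Phi_j$ to the model half-space $\R^n_+$. Standard estimates show that multiplication by the $\phi_j$ and the bi-Lipschitz pull-backs act boundedly in both directions on $H^s(\R^n)$ and on $H^{s-\ha}(\pO)$ with constants depending only on the chart data and not on $s$ in the bounded range $(\ha,1]$. This reduces \eqref{gsest} to the corresponding estimate for the flat trace $\gamma_0:H^s(\R^n)\to H^{s-\ha}(\R^{n-1})$.

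For the model problem the calculation is explicit. For $v\in H^{\ha-s}(\R^{n-1})$, one has $\gamma_0^* v = v(x')\otimes\delta_0(x_n)$, whose Fourier transform is the $\xi_n$-independent function $\hat v(\xi')$. Hence
$$
\|\gamma_0^* v\|_{H^{-s}(\R^n)}^2
=\int_{\R^{n-1}}|\hat v(\xi')|^2\int_{-\infty}^\infty(1+|\xi'|^2+\xi_n^2)^{-s}\,d\xi_n\,d\xi',
$$
and the substitution $\xi_n=(1+|\xi'|^2)^{1/2}\eta$ turns the inner integral into $(1+|\xi'|^2)^{\ha-s}C_s$, yielding the exact identity $\|\gamma_0^* v\|_{H^{-s}(\R^n)}^2=C_s\,\|v\|_{H^{\ha-s}(\R^{n-1})}^2$. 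Transferred back through the charts this gives \eqref{gsest} with $C',C''$ independent of $s$. The equality of operator norms in \eqref{ges} is then immediate from the definition of the adjoint.

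For the unboundedness of $\gamma:H^\ha(\R^n)\to L_2(\pO)$ I would argue by contradiction and duality. If such a bounded extension $T$ existed with norm $M$, then $T^*=\gamma^*:L_2(\pO)\to H^{-\ha}(\R^n)$ would be bounded with norm at most $M$. Then for any fixed nonzero $w\in L_2(\pO)$ and every $s>\ha$,
$$
C'\sqrt{C_s}\,\|w\|_{H^{\ha-s}(\pO)}\le\|\gamma^* w\|_{H^{-s}(\R^n)}\le\|\gamma^* w\|_{H^{-\ha}(\R^n)}\le M\|w\|_{L_2(\pO)},
$$
and as $s\searrow\ha$ the right-hand side stays bounded while $C_s\to\infty$ and $\|w\|_{H^{\ha-s}(\pO)}\to\|w\|_{L_2(\pO)}>0$, a contradiction. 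The main obstacle throughout is the $s$-uniformity of the chart and straightening constants: bi-Lipschitz changes of variables and smooth cutoffs preserve $H^s$-norms only up to constants that a priori depend on $s$, so one must check (via the Slobodetski integral representation or a real-interpolation argument) that these constants stay bounded on the compact range $s\in[\ha,1]$.
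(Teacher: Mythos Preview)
Your proposal is correct and follows essentially the same route as the paper: the exact half-space identity $\|\gamma_0^*v\|_{H^{-s}(\R^n)}^2=C_s\|v\|_{H^{\ha-s}(\R^{n-1})}^2$ via the substitution $\xi_n=(1+|\xi'|^2)^{1/2}\eta$, transferred to a general Lipschitz boundary through a partition of unity and bi-Lipschitz straightening with $s$-uniform constants on $[\ha,1]$. The only notable difference is the endpoint argument: the paper shows directly from the flat-case formula that $\|\gamma^*v\|_{H^{-\ha}(\R^n)}=\infty$ for every nonzero $v$ and then pushes this through the localization inequalities, whereas you argue by contradiction from the lower bound $C'\sqrt{C_s}\|w\|_{H^{\ha-s}(\pO)}\le\|\gamma^*w\|_{H^{-\ha}(\R^n)}$ and the blow-up $C_s\to\infty$; both are valid, and your version has the mild advantage that it avoids re-tracing the localization for the borderline exponent.
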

\begin{proof}
Let first consider the lemma for the half-space,
$\Omega=\R^n_+=\{x\in\R^n : x_n>0\}$, where $x=\{x',x_n\}$, $x'\in\R^{n-1}$. For $v\in H^{\ha-s}(\R^{n-1})$, taking into account the uniqueness of the trace operator for $s>\ha$,
the distributional Fourier transform gives
 $${\cal F}_{x\to \xi}\{\gamma^*v\}={\cal F}_{x'\to
 \xi'}\{v(x')\}=:\hat{v}(\xi').$$
Then  we have,
 \begin{multline}
\|\gamma^*v\|_{H^{-s}(\R^n)}^2=\int_{\R^n}(1+|\xi|^2)^{-s}
|\hat{v}(\xi')|^2
d\xi\\
=\int_{\R^{n-1}}
\left[\int_{\R}(1+|\xi'|^2+|\xi_n|^2)^{-s}d\xi_n\right]|\hat{v}(\xi')|^2d\xi'=
 C_s\|v\|^2_{H^{\ha-s}(\R^{n-1})},\label{g*}
 \end{multline}
where the substitution $\xi_n=(1+|\xi'|^2)^{\ha}\eta$ was used, cf. \cite[Chap. 2, Proposition 4.6]{ChazPir1982}. Thus
 $$
 \|\gamma\|_{H^s(\R^n)\to H^{s-\ha}(\R^{n-1})}=\|\gamma^*\|_{H^{\ha-s}(\R^{n-1})\to H^{-s}(\R^n)}=\sqrt{C_s}\to\infty\quad \text{ as } s\searrow \ha.
 $$

On the other hand, by \eqref{g*} the norm  $\|\gamma^*v\|_{H^{-\ha}(\R^n)}$ is not finite for any non-zero $v$. This means the operator $\gamma^*:{H^{0}(\R^{n-1})\to H^{-\ha}(\R^n)}$  and thus the operator $ \gamma:{H^\ha(\R^n)\to H^{0}(\R^{n-1})}$ is not bounded, which completes the lemma for $\Omega=\R^n_+$ with $C'=C''=1$.

Let now $\Omega$ be a general Lipschitz domain.
For $v\in L_2(\pO)$, $w\in \D(\R^n)$, using the boundary cover and corresponding partition of unity as in Section~\ref{S2.1} we have,
\begin{multline*}
\langle \gamma^*v, w\rangle_{\R^n}=\langle v,\gamma w\rangle_\pO=\int_\pO v(x)w(x)d\sigma(x)=
\sum_{j=1}^J\int_\pO \varphi_j(x)v(x)w(x)d\sigma(x)=\\
\sum_{j=1}^J\int_{\R^{n-1}} [({\varphi_j}v)\circ\varkappa_j](x')[w\circ\varkappa_j](x')D_j(x')dx'=\\
\sum_{j=1}^J\langle D_j({\varphi_j}v)\circ\varkappa_j, \gamma_0[w\circ\varkappa_j]\rangle_{\R^{n-1}}=
\sum_{j=1}^J\langle \gamma_0^*[D_j({\varphi_j}v)\circ\varkappa_j], w\circ\varkappa_j\rangle_{\R^{n}},
\end{multline*}
where  $\gamma_0$, $\gamma_0^*$ are the trace operator on $\R^n_+$ and its adjoined, respectively. Taking into account density of $\D(\R^n)$ in $H^s(\R^n)$ and of $L_2(\pO)$ in $H^{\ha-s}(\pO)$, we have,
 \begin{multline}\label{gsvG}
\|\gamma^*v\|_{H^{-s}(\R^n)}=\sup_{w\in H^{s}(\R^n)}
\frac{|\langle \gamma^*v, w\rangle_{\R^n}|}{\|w\|_{H^{s}(\R^n)}}
=
\sup_{w\in H^{s}(\R^n)}
\left|\sum_{j=1}^J\left\langle \gamma_0^*[D_j({\varphi_j}v)\circ\varkappa_j], \frac{w\circ\varkappa_j}{\|w\|_{H^{s}(\R^n)}}\right\rangle_{\R^{n}}\right|
 \end{multline}
for any $v\in H^{\ha-s}(\pO)$.

It is well known (see e.g. \cite[Theorem 3.23 and p. 98]{McLean2000}) that
 \bea
\|v\|^2_{H^{\ha-s}(\pO)}=\sum_{j=1}^J\|D_j({\varphi_j}v)\circ\varkappa_j\|^2_{H^{\ha-s}(\R^{n-1})},\quad \ha< s\le \tha, \label{est2G}\\
\tilde C'\|w\|_{H^s(\R^n)}\le\|w\circ\varkappa_j\|_{H^s(\R^n)}\le \tilde C'' \|w\|_{H^s(\R^n)},\quad j=1,...,J,\quad 0\le s\le 1,
 \label{estG}
 \eea
where $\tilde C',\tilde C''$ are some positive constants independent of $s$. By \eqref{g*} and \eqref{est2G},
 $$
\|\gamma_0^*[D_j({\varphi_j}v)\circ\varkappa_j]\|_{H^{-s}(\R^n)}=
\sqrt{C_s}\|D_j({\varphi_j}v)\circ\varkappa_j\|_{H^{\ha-s}(\R^{n-1})}\le
\sqrt{C_s}\|v\|_{H^{\ha-s}(\pO)}.
 $$
Then   \eqref{gsvG} and \eqref{estG} imply
 \bes
 \|\gamma^*v\|_{H^{-s}(\R^n)}\le \tilde C''J\sqrt{C_s}\|v\|_{H^{\ha-s}(\pO)}\quad \forall v\in H^{\ha-s}(\pO),
 \ees
which is the right inequality in  \eqref{gsest}.

On the other hand, we have for $v\in L_2(\pO)$, $w\in \D(\R^n)$,
\begin{multline*}
\langle \varphi_j\gamma^*v, w\rangle_{\R^n}=\langle v,\gamma (\varphi_jw)\rangle_\pO=
 \int_\pO v(x)\varphi_j(x)w(x)d\sigma(x)=\\
 \int_{\pO\cap\omega_j} v(x)\varphi_j(x)w(x)d\sigma(x)=
\int_{\R^{n-1}} [(\varphi_jv_j)\circ\varkappa_j](x')[w\circ\varkappa_j](x')D_j(x')dx'=\\
\langle D_j [(\varphi_jv_j)\circ\varkappa_j], \gamma_0[w\circ\varkappa_j]\rangle_{\R^{n-1}}=
\langle \gamma_0^*\{D_j[(\varphi_jv_j)\circ\varkappa_j]\}, w\circ\varkappa_j\rangle_{\R^{n}}.
\end{multline*}
By \eqref{estG} this implies,
 \begin{multline}\|\varphi_j\gamma^*v\|_{H^{-s}(\R^n)}
=\sup_{w\in H^{s}(\R^n)}
\left|\left\langle \gamma_0^*\{D_j[(\varphi_jv)\circ\varkappa_j]\}, \frac{w\circ\varkappa_j}{\|w\|_{H^{s}(\R^n)}}\right\rangle_{\R^{n}}\right|
=\\
\sup_{w\in H^{s}(\R^n)}
\left|\left\langle \gamma_0^*\{D_j[(\varphi_jv)\circ\varkappa_j]\}, \frac{w\circ\varkappa_j}{\|w\circ\varkappa_j\|_{H^{s}(\R^n)}}\right\rangle_{\R^{n}}
\ \frac{\|w\circ\varkappa_j\|_{H^{s}(\R^n)}}{\|w\|_{H^{s}(\R^n)}}\right|\ge\\
\tilde C'\sup_{w\in H^{s}(\R^n)}
\left|\left\langle \gamma_0^*\{D_j[(\varphi_jv)\circ\varkappa_j]\}, \frac{w\circ\varkappa_j}{\|w\circ\varkappa_j\|_{H^{s}(\R^n)}}\right\rangle_{\R^{n}}
\right|=\tilde C'\|\gamma_0^*\{D_j[(\varphi_jv)\circ\varkappa_j]\}\|_{H^{-s}(\R^n)},
 \label{2.16a}
 \end{multline}
that is by \eqref{g*} and \eqref{est2G},
\begin{multline}\label{2.18}
\sum_{j=1}^J\|\varphi_j\gamma^*v\|_{H^{-s}(\R^n)}^2\ge
 \tilde C'^2\sum_{j=1}^J\|\gamma_0^*\{D_j[(\varphi_jv)\circ\varkappa_j]\}\|_{H^{-s}(\R^n)}^2=\\
 \tilde C'^2{C_s}\sum_{j=1}^J\|D_j[(\varphi_jv)\circ\varkappa_j]\|_{H^{\ha-s}(\R^{n-1})}^2=
\tilde C'^2C_s\|v\|_{H^{\ha-s}(\pO)}^2.
\end{multline}
Since
 \be\label{2.17a}
 \tilde C_j\|\gamma^*v\|_{H^{-s}(\R^n)}\ge\|\varphi_j\gamma^*v\|_{H^{-s}(\R^n)}
 \ee
for $\varphi_j\in\D(\R^n)$, \eqref{2.18} gives the left inequality in \eqref{gsest}.

Obviously, \eqref{gsest} implies \eqref{ges} for $\gamma^*$ and thus for $\gamma$.

As was shown in the first paragraph of the proof,  the functional $\gamma_0^*\{D_j[(\varphi_jv)\circ\varkappa_j]\}$ is not bounded on $H^{\ha}(\R^n)$  for any non-zero $v$, then \eqref{2.16a}, \eqref{2.17a} imply that the operator $\gamma^*:{H^{0}(\pO)\to H^{-\ha}(\R^n)}$  and thus the operator $\gamma:{H^\ha(\R^n)\to H^{0}(\pO)}$ is not bounded.
\end{proof}

For $s>3/2$ the trace operators \eqref{gammacont} are not continuous on Lipschitz domains, however the following weaker statement holds, which was mentioned in \cite{Ding1996} without a proof but can be indeed proved by appropriate estimates of an integral on p. 598 of \cite{Ding1996} for this case.
\begin{lemma}\label{DingMent} If $\Omega$ is a Lipschitz domain and $s>3/2$, then
the trace operators
 \bes
 \gamma:
H^s(\R^n)\to H^{1}(\pO)\quad \mbox{and}\quad \gamma^{\pm}: H^s(\Omega^\pm)\to
H^{1}(\pO)
\ees
are continuous.
\end{lemma}

\begin{lemma}\label{LipExt}
For a Lipschitz domain $\Omega$ there exists a linear
bounded extension operator $\gamma_{-1}: H^{s-\ha}(\pO)\to H^{s}(\R^n)$,
$\ha\le s\le\tha$, which is right inverse to the trace operator
$\gamma$, i.e., $\gamma \gamma_{-1} g=g$ for any $g\in H^{s-\ha}(\pO)$.
(For $s=\ha$ the trace operator $\gamma$ is understood not as in Definition~\ref{TraceDef} but in the non-tangential sense, see, e.g. \cite{JK1981AM}.)
Moreover, $\|\gamma_{-1}\|_{ H^{s-\ha}(\pO)\to H^{s}(\R^n)}\le C$, where $C$
is independent of $s$.
\end{lemma}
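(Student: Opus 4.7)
The plan is to build $\gamma_{-1}$ by a half-space formula that is completely independent of $s$, and then to glue its local versions using the partition-of-unity apparatus already fixed in the proof of Lemma~\ref{tauunb}, keeping all constants uniform in $s\in[\ha,\tha]$.

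First I would treat $\Omega=\R^n_+$. Fix $\chi\in\D(\R)$ with $\chi(0)=1$, and for $g\in H^{s-\ha}(\R^{n-1})$ put
\[
[\gamma_{-1}^0 g](x',x_n):=\F^{-1}_{\xi'\to x'}\bigl\{\hat g(\xi')\,\chi\bigl(x_n(1+|\xi'|^2)^{\ha}\bigr)\bigr\}.
\]
The identity $\gamma\gamma_{-1}^0 g=g$ is immediate from $\chi(0)=1$. Taking the Fourier transform in $x_n$ as well and then substituting $\xi_n=(1+|\xi'|^2)^{\ha}\eta$, exactly as in the derivation of \eqref{g*}, one gets
\[
\|\gamma_{-1}^0 g\|_{H^s(\R^n)}^2=\left(\int_{\R}(1+\eta^2)^s|\hat\chi(\eta)|^2\,d\eta\right)\|g\|_{H^{s-\ha}(\R^{n-1})}^2.
\]
Since $\hat\chi\in C^\infty(\R)$ decays faster than any polynomial, the prefactor is continuous in $s$ and thus bounded uniformly on $[\ha,\tha]$.

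Next, using the cover $\{\omega_j\}_{j=1}^{J}$, partition of unity $\{\phi_j\}$, charts $\kappa_j$ and Jacobians $D_j$ from Lemma \ref{tauunb}, I would pick auxiliary cutoffs $\psi_j\in\D(\omega_j)$ with $\psi_j\equiv 1$ on $\supp\phi_j$, and $\chi_0\in\D(\R)$ with $\chi_0\equiv 1$ near $0$, supported so tightly that $\chi_0(x_n)$ is concentrated where $\kappa_j$ is a bi-Lipschitz diffeomorphism onto a subset of $\omega_j$. Set
\[
\gamma_{-1}g:=\sum_{j=1}^{J}\psi_j\,\Bigl[\bigl(\chi_0(x_n)\,\gamma_{-1}^0[(\phi_jg)\circ\kappa_j]\bigr)\circ\kappa_j^{-1}\Bigr].
\]
Since $\psi_j\phi_j=\phi_j$ and $\sum_j\phi_j\equiv1$ on $\pO$, we obtain $\gamma\gamma_{-1}g=g$. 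The norm bound then combines the half-space estimate with (i) uniform-in-$s$ boundedness of multiplication by a fixed $C^\infty$-compactly supported function on $H^s(\R^n)$, (ii) uniform-in-$s$ boundedness of pullback by bi-Lipschitz maps, as expressed in \eqref{estG}, and (iii) the equivalent norm \eqref{est2G} on $\pO$, all with constants independent of $s$ on a compact range.

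The main obstacle is the range $s\in(1,\tha]$, because \eqref{estG} is stated and true only for $0\le s\le 1$ in the general Lipschitz category. To close this gap I would verify the endpoint $s=\tha$ directly, exploiting the concrete structure of $\gamma_{-1}^0 g$ (arbitrary smoothness in $x_n$, only half a derivative of extra regularity needed in $x'$) and moving the extra fractional derivative onto the $x_n$-variable across the shear $(x',x_n)\mapsto(x',x_n-\zeta_j(x'))$. With both endpoints $s=\ha$ and $s=\tha$ bounded by $s$-independent constants, complex interpolation applied to the single ($s$-independent) operator $\gamma_{-1}$ yields the intermediate range with a constant no larger than the geometric mean of the endpoint constants, hence independent of $s\in[\ha,\tha]$.
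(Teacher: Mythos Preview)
Your half-space construction is clean and the localisation up to $s\le 1$ is correct, but the endpoint $s=\tha$ is a genuine gap, not a detail. When you differentiate the pulled-back extension $v=(\chi_0\,\gamma_{-1}^0 h)\circ\kappa_j^{-1}$ in order to test membership in $H^{\tha}$, the tangential derivative $\partial_{x_i}v$ (for $i<n$) produces the term $(\partial_{x_i}\zeta_j)(x')\cdot\bigl[(\partial_{x_n}u)\circ\kappa_j^{-1}\bigr]$. The second factor does lie in $H^{\ha}$ by \eqref{estG}, but the multiplier $\partial_{x_i}\zeta_j$ is merely $L_\infty$, and multiplication by an $L_\infty$ function does \emph{not} preserve $H^{\ha}$. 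Your phrase ``moving the extra fractional derivative onto the $x_n$-variable across the shear'' does not name a concrete mechanism that circumvents this; the anisotropic structure of $\gamma_{-1}^0 h$ is already what \eqref{estG} exploits to reach $s=1$, and once the shear has mixed $x'$ and $x_n$ it does not obviously deliver the extra half-derivative. Without a rigorous argument at $s=\tha$ the interpolation step has nothing to interpolate from, and the proof is incomplete precisely on the range $(1,\tha]$ that the lemma is meant to cover.

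The paper avoids this obstruction by a completely different, potential-theoretic route: it sets $\gamma_{-1}:=\chi\, V_\Delta\,\V_\Delta^{-1}$, where $V_\Delta$ is the single layer potential for the Laplacian, $\V_\Delta$ its direct boundary value, and $\chi\in\D(\R^n)$ a cutoff equal to $1$ near $\overline{\Omega}$. The mapping properties $\V_\Delta^{-1}:H^{s-\ha}(\pO)\to H^{s-\tha}(\pO)$ and $V_\Delta:H^{s-\tha}(\pO)\to H^s_{loc}(\R^n)$ on the full range $\ha\le s\le\tha$ are imported from the Jerison--Kenig, Verchota and Costabel theory for the Laplacian on Lipschitz domains. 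This is less elementary than your Fourier construction but sidesteps chart-flattening altogether, which is exactly where the bi-Lipschitz pullback obstacle lives.
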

\begin{proof}
For Lipschitz domains and $\ha<s\le 1$, the boundedness of the
extension operator is well known, see e.g. \cite[Theorem
3.37]{McLean2000}.

To prove it for the whole range $\ha\le s\le\tha$, let us consider the Green operator $G_\Delta$ that solves the Dirichlet Problem for the Laplace equation in $\Omega$ and continuously maps $H^{s-\ha}(\pO)$ to $H^{s}(\Omega)$ if $\Omega$ is a bounded domain and to $H^{s}_{loc}(\ov\Omega)$ if $\Omega$ is an unbounded domain. Particularly one can take $G_\Delta=V_\Delta\V_\Delta^{-1}$, where the single layer potential $V_\Delta \varphi$ with a
density $\varphi=\V_\Delta^{-1} g\in H^{s-\tha}(\pO)$, solves the
Laplace equation in $\Omega$ with the Dirichlet boundary data
$g$ and $\V_\Delta$ is the direct value of the operator
$V_\Delta$ on the boundary. The operators
$\V_\Delta^{-1}:H^{s-\ha}(\pO)\to H^{s-\tha}(\pO)$ and
$V_\Delta:H^{s-\tha}(\pO) \to H^{s}_{loc}(\R^n)$ are continuous
for  $\ha\le s\le\tha$ as stated in \cite{JK1981BAMS, JK1981AM,
JK1982, Verchota1984, Costabel1988}. Thus it suffice to take
$\gamma_{-1}=\chi G_\Delta $, where $\chi\in\D(\R^n)$ is a
cut-off function such that $\chi=1$ in a sufficiently large open ball such that it includes the boundary $\pO$.
The estimate $\|\gamma_{-1}\|_{ H^{s-\ha}(\pO)\to H^{s}(\R^n)}\le C$, where
$C$ is independent of $s$, then follows.
\end{proof}
Note that continuity of the operator $\gamma$ was not needed in the proof.

Let us denote by $E_0 $  the operator of extension of a function defined in $\Omega$ by zero outside $\Omega$ to a function defined in $\R^n$.
\begin{theorem}\label{McL3.40m-1}
Let $\Omega$ be a Lipschitz domain and $s\ge 0$ while $s\not=\ha+k$ for any integer $k\ge 0$. Then
$$\s{H}^s(\Omega)=H_0^s(\Omega)$$
in the sense that $u|_{\Omega}\in H_0^s(\Omega)$ for any $u\in\s{H}^s(\Omega)$, and $E_0 v\in\s{H}^s(\Omega)$ for any $v\in {H}^s_0(\Omega)$. Moreover
\be \|u|_{\Omega}\|_{H^s(\Omega)}\le \|u\|_{\s{H}^s(\Omega)},\qquad\|E_0 v\|_{\s H^{s}(\Omega)}\le C \|v\|_{H^{s}(\Omega)}\label{tnormest2},\ee
where $C$ depends only on $s$ and on the maximum of the Lipschitz constants of the representation functions $\zeta_j$ for the boundary $\pO$, see Section~\ref{S2.1}.
\end{theorem}
\begin{proof} The first claim is proved in \cite[Theorem 3.33]{McLean2000}. The first estimate in \eqref{tnormest2} is evident, while the second follows from the proofs of the same Theorem 3.33 and Lemma 3.32 in \cite{McLean2000}.
\end{proof}

To characterize the space $H_0^s(\Omega)=\s{H}^s(\Omega)$ for
$\ha<s<\tha$, we will need the following statement.
\begin{lemma}\label{McL3.31}
If $\Omega$ is a Lipschitz domain and $u\in H^s(\Omega)$,
$0<s<\ha$, then
 \begin{equation}
\int_\Omega \mbox{dist}(x,\pO)^{-2s}|u(x)|^2dx\le
C\|u\|^2_{H^s(\Omega)}\label{McL3.31e}
 \end{equation}
and for a given boundary cover the constant $C$ depends only on $s$ and on the maximum of the Lipschitz constants of the boundary representation functions $\zeta_j$, see Section~\ref{S2.1}.
\end{lemma}

\begin{proof} Note first that the lemma claim  for
$u\in\D(\ov{\Omega})$ follows from  the proof of \cite[Lemma 3.32]{McLean2000}. To prove
it for $u\in H^s(\Omega)$, let first the domain $\Omega$ be such
that
\begin{equation}
\mbox{dist}(x,\pO)<C_0<\infty\label{C_0}
\end{equation}
for all $x\in\Omega$, which holds true particularly for bounded
domains. Let $\{\phi_k\}\in\D(\ov{\Omega})$ be a sequence converging to $u$
in $H^s(\Omega)$. If we denote $w(x)=\mbox{dist}(x,\pO)^{-2s}$, then
$w(x)>C_0^{-2s}>0$. Since \eqref{McL3.31e} holds for functions from
$\D(\ov{\Omega})$, the sequence $\{\phi_k\}\in\D(\ov{\Omega})$ is
fundamental in the weighted space $L_2(\Omega,w)$, which is complete,
implying that $\phi_k\in\D(\ov{\Omega})$ converges in this space to a
function $u'\in L_2(\Omega,w)$. Since both $L_2(\Omega,w)$ and
$H^s(\Omega)$ are continuously imbedded in the non-weighted space
$L_2(\Omega)$, the sequence $\{\phi_k\}$ converges in $L_2(\Omega)$
implying the limiting functions $u$ and $u'$ belong to this space and
thus coincide. Then from \eqref{McL3.31e} for $\phi_k$ we immediately obtain it for arbitrary $u\in H^s(\Omega)$.

For the unbounded domains for which condition \eqref{C_0} is not satisfied,  let
$\chi(x)\in\D(\R^n)$ be a cut-off function such that $0\le\chi(x)\le 1$
for all $x$, $\chi(x)=1$ near $\pO$, while $w(x)<1$ for $x\in\supp\
(1-\chi)$.   Then \eqref{C_0} is satisfied in
$\Omega'=\Omega\bigcap\supp\chi(x)$ and
\begin{multline*}
\int_\Omega w(x)|u(x)|^2dx= \int_\Omega(1-\chi(x))w(x)|u(x)|^2dx
 + \int_\Omega \chi(x)w(x)|u(x)|^2dx\le\\
 \|u\|^2_{L_2(\Omega)}+\int_{\Omega'} w(x)|\sqrt{\chi(x)}u(x)|^2dx
\le \|u\|^2_{H^s(\Omega)}+C\|\sqrt{\chi(x)}u\|^2_{H^s(\Omega')}\le
C_1\|u\|^2_{H^s(\Omega)}
 \end{multline*}
due to the previous paragraph.
\end{proof}

Lemma \ref{McL3.31} allows now extending the following statement
known for $\ha<s\le 1$, see  \cite[Theorem 3.40(ii)]{McLean2000},
to a wider range of $s$.

\begin{theorem}\label{McL3.40m-2}
If $\Omega$ is a Lipschitz domain and $\ha<s<\tha$, then
\be H_0^s(\Omega)=\{u\in H^s(\Omega) : \gamma^+u=0\}.\label{2.5E2}\ee
\end{theorem}
\begin{proof}
Equality \eqref{2.5E2}  for $\ha<s\le 1$ is stated in
\cite[Theorem 3.40(ii)]{McLean2000}.

Let $1<s<\tha$. If $u\in H^s_0(\Omega)$
then evidently $\gamma^+u=0$ since $\D$ is dense in
${H}^s_0(\Omega)$ and the trace operator $\gamma^+$ is bounded in
${H}^s(\Omega)$.
To prove that any $u\in{H}^s(\Omega)$ with $\gamma^+u=0$ belongs to ${H}^s_0(\Omega)$, it remains, due to Theorem~\ref{McL3.40m-1}, to prove that $E_0{u}\in{H}^s(\R^n)$. We remark first
of all that $E_0{u}\in {H}^1(\R^n)$ due to the previous paragraph and Theorem~\ref{McL3.40m-1}, and then make estimates similar to those in
the proof of \cite[Theorem 3.33]{McLean2000},
\begin{multline*}
\|E_0{u}\|^2_{{H}^s(\R^n)}\sim
\|E_0{u}\|^2_{W^1_2(\R^n)}+\int_{\R^n}\int_{\R^n}\frac{|\nabla
E_0{u}(x)-\nabla E_0{u}(y)|^2}{|x-y|^{2(s-1)+n}}\ dx\ dy\\
= \|u\|^2_{W^1_2(\Omega)}+\int_{\Omega}\int_{\Omega}\frac{|\nabla
u(x)-\nabla u(y)|^2}{|x-y|^{2(s-1)+n}}\ dx\ dy\\
+\int_{\R^n\backslash\Omega}\int_{\Omega}\frac{|\nabla
u(x)|^2}{|x-y|^{2(s-1)+n}}\ dx\ dy
+\int_{\Omega}\int_{\R^n\backslash\Omega}\frac{|\nabla
u(y)|^2}{|x-y|^{2(s-1)+n}}\ dx\ dy\\
=\|u\|^2_{W^s_2(\Omega)} +2\int_{\Omega}|w_{s-1}(x)\nabla u(x)|^2\
dx,
\end{multline*}
where
 $$
w_{s-1}(x):=\int_{\R^n\backslash\Omega}\frac{dy}{|x-y|^{2(s-1)+n}},\quad
x\in\Omega,
 $$
and $W^s_2(\Omega)$ is the Sobolev-Slobodetski space. Introducing
spherical coordinates with $x$ as an origin, we obtain, $w_{s-1}(x)\le
\frac{\alpha_n}{2(s-1)} \mbox{dist}(x,\pO)^{-2(s-1)}$ for $x\in\Omega$, where $\alpha_n$ is the area of the unit sphere in $\R^n$. Then, taking
into account that $\nabla{u}\in H^{s-1}(\Omega)$ and
$\|\nabla{u}\|_{H^{s-1}(\Omega)}\le \|{u}\|_{H^{s}(\Omega)}$, we
have  by Lemma \ref{McL3.31},
 $$
\|E_0{u}\|^2_{{H}^s(\R^n)}\le
\|u\|^2_{W^s_2(\Omega)} +2C\|u\|^2_{H^s(\Omega)}\le
C_{s}\|u\|^2_{H^s(\Omega)}\ .
 $$
Theorem \ref{McL3.40m-1} completes the proof.
\end{proof}

Let us now give a characterization of the space $H^t_{\pO}$.

\begin{theorem}\label{H_F=0}
Let $\Omega$ be a Lipschitz domain  in $\R^n$.


(i)  If $ t\ge -\ha$, then $H^t_{\pO}=\{0\}$.

(ii) If $-\tha< t< -\ha$, then $g\in H^t_{\pO}$ if and only if $g=\gamma^*v$, i.e.,
\begin{equation}\label{H^t_}
  \langle g, W \rangle_{\R^n}=\langle v, \gamma W\rangle_\pO\quad \forall\
  W\in H^{-t}(\R^n),
\end{equation}
with $v=\gamma_{-1}^*g\in H^{t+\ha}(\pO)$, i.e.,
\begin{equation}\label{H^t_v}
 \langle v, w\rangle_\pO\quad = \langle g, \gamma_{-1}w \rangle_{\R^n}\quad\forall\
  w\in H^{-t-\ha}(\pO),
\end{equation}
where $v$ is independent of the choice of the non-unique operators $\gamma_{-1}$, $\gamma_{-1}^*$, and the estimate
$\|v\|_{H^{t+\ha}(\pO)}\le C \|g\|_{H^t(\R^n)}$ holds with $C$
independent of $t$.
\end{theorem}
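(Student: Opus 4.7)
The plan is to handle (i) and (ii) separately, with the central unifying idea that $\supp g\subset\pO$ forces $\langle g,\phi\rangle_{\R^n}=0$ for any $\phi$ compactly supported in $\R^n\setminus\pO$, after which the issue in each part becomes the availability of suitable decompositions of test elements in $H^{-t}(\R^n)$ into pieces supported in $\overline{\Omega^+}$ and $\overline{\Omega^-}$.

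For part (i), take $g\in H^t_{\pO}$ with $t\ge-\ha$, so $-t\le\ha$. By Theorem~\ref{H0=H}, for any $W\in H^{-t}(\R^n)$ the restrictions $W|_{\Omega^\pm}$ admit zero extensions $W_\pm\in\s H^{-t}(\Omega^\pm)$. Since $\pO$ has zero Lebesgue measure, $W=W_++W_-$, and each $W_\pm$ is an $H^{-t}(\R^n)$-limit of a sequence from $\D(\Omega^\pm)$. Because $\langle g,\phi\rangle_{\R^n}=0$ for every such $\phi$, passing to the limit gives $\langle g,W_\pm\rangle_{\R^n}=0$, hence $\langle g,W\rangle_{\R^n}=0$ for all $W$ and so $g=0$.

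For part (ii), the $(\Leftarrow)$ direction is immediate from the boundedness of $\gamma^*:H^{t+\ha}(\pO)\to H^t(\R^n)$ (adjoint to Lemma~\ref{tauunb}) together with $\langle\gamma^*v,\phi\rangle_{\R^n}=\langle v,\gamma\phi\rangle_{\pO}=0$ for every $\phi\in\D(\R^n\setminus\pO)$, which yields $\supp\gamma^*v\subset\pO$. For $(\Rightarrow)$, I would define $v$ on $H^{-t-\ha}(\pO)$ by the right-hand side of \eqref{H^t_v} and use Lemma~\ref{LipExt} to estimate
\bes
|\langle g,\gamma_{-1}w\rangle_{\R^n}|\le\|g\|_{H^t(\R^n)}\|\gamma_{-1}w\|_{H^{-t}(\R^n)}\le C\|g\|_{H^t(\R^n)}\|w\|_{H^{-t-\ha}(\pO)}
\ees
with $C$ independent of $t$, which identifies $v$ with an element of $H^{t+\ha}(\pO)$ satisfying the claimed norm bound.

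The main obstacle, which simultaneously delivers independence of $v$ from the choice of $\gamma_{-1}$ and the verification of \eqref{H^t_}, is the following claim: if $f\in H^{-t}(\R^n)$ satisfies $\gamma f=0$ then $\langle g,f\rangle_{\R^n}=0$. Since $\ha<-t<\tha$, Theorem~\ref{McL3.40m} shows that the zero extension $f_+$ of $f|_{\Omega^+}$ belongs to $\s H^{-t}(\Omega^+)$; the difference $f-f_+\in H^{-t}(\R^n)$ vanishes on $\Omega^+$ and coincides with $f$ on $\Omega^-$, hence is the zero extension of $f|_{\Omega^-}$ and lies in $\s H^{-t}(\Omega^-)$. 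Approximating $f_+$ and $f-f_+$ in $H^{-t}(\R^n)$ by sequences from $\D(\Omega^+)$ and $\D(\Omega^-)$, each of which pairs to zero with $g$, yields $\langle g,f\rangle_{\R^n}=0$. Applying this to $f=(\gamma_{-1}^{(1)}-\gamma_{-1}^{(2)})w$ gives uniqueness of $v$, and to $f=W-\gamma_{-1}\gamma W$ gives \eqref{H^t_}.
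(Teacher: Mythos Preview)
Your argument for part~(ii) is correct and matches the paper's: define $v$ by \eqref{H^t_v}, bound it via Lemma~\ref{LipExt}, and reduce both \eqref{H^t_} and the independence of $v$ to the claim that $\langle g,f\rangle_{\R^n}=0$ whenever $\gamma f=0$, which you verify through Theorem~\ref{McL3.40m} exactly as the paper does. For $t>-\ha$ your treatment of part~(i) is also essentially the paper's, though you should note that Theorem~\ref{H0=H} is proved in the paper \emph{using} the present result, so invoking it is circular; for $t>0$ one bypasses this by the embedding $L_2\hookrightarrow H^{-t}$, and for $-\ha<t\le 0$ the needed fact is McLean's Theorem~3.40 directly.

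The genuine gap is at the endpoint $t=-\ha$. There the test space is $H^{1/2}(\R^n)$, and your splitting $W=W_++W_-$ with $W_\pm\in\s H^{1/2}(\Omega^\pm)$ fails: the zero extension of a general element of $H^{1/2}(\Omega^\pm)$ is \emph{not} in $H^{1/2}(\R^n)$, since $\s H^{1/2}(\Omega)\subsetneq H^{1/2}(\Omega)$ (take any $W\in\D(\R^n)$ with $W\not\equiv 0$ on $\pO$; then $W_+$ has a jump across $\pO$ and lies in no $H^s$ with $s\ge\ha$). Theorem~\ref{H0=H} gives only density of $\D(\Omega^\pm)$ in the quotient norm of $H^{1/2}(\Omega^\pm)$, which does not control the $H^{1/2}(\R^n)$ norm of zero extensions. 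The paper closes this gap by an essentially different idea: embed $H^{-1/2}_{\pO}\subset H^t_{\pO}$ for each $t\in(-\tha,-\ha)$, apply part~(ii) to write $g=\gamma^*v$ with a fixed $v\in H^{t+1/2}(\pO)$, and then use the lower bound of Lemma~\ref{tauunb}, $\|g\|_{H^t(\R^n)}\ge C'\sqrt{C_{-t}}\,\|v\|_{H^{t+1/2}(\pO)}$, together with $C_{-t}\to\infty$ as $t\nearrow-\ha$ while $\|g\|_{H^t}$ stays bounded, to force $v=0$ and hence $g=0$.
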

\begin{proof}
We will follow an idea in the proof of Lemma 3.39 in
\cite{McLean2000} (see also \cite[Proposition 4.8]{ChazPir1982}), extending it from a half-space to a Lipschitz
domain $\Omega$.

Let $\Omega^+=\Omega$ and $\Omega^-=\R^n\backslash\bar{\Omega}$.
For any $\phi\in {\cal D}(\R^n)$, let us define
$$
\phi^\pm(x)=\begin{cases} \phi(x)& \text{if\ } x\in \Omega^\pm,\\
0& \text{if\ } x\not\in \Omega^\pm.
\end{cases}
$$
Let $t> -\ha$. Then $\phi^\pm\in \s{H}^{-t}(\Omega^\pm)$ (see e.g.
\cite[Theorem 3.40]{McLean2000} and Theorem \ref{McL3.40m-1} for $-\ha<t\le 0$, for
greater $t$ it then follows by embedding),
$\|\phi-\phi^+-\phi^-\|_{H^{-t}(\R^n)}=0$, and there exist
sequences $\{\phi^\pm_k\}\in {\cal D}(\Omega^\pm)$ converging to
$\phi^\pm$ in $\s{H}^{-t}(\Omega^\pm)$ as $k\to\infty$. Hence
$\langle g,\phi\rangle_{\R^n}=\lim_{k\to\infty}\langle g,\phi^+_k +
\phi^-_k\rangle_{\R^n} =0$ for any
$\phi\in {\cal D}(\R^n)$
proving (i) for $t> -\ha$ since ${\cal D}(\R^n)$ is dense in $H^{-t}(\R^n)=[H^t(\R^n)]^*$.

Let us prove (ii). For $g\in H^t_\pO$, $-\tha< t< -\ha$, let
$v\in H^{t+\ha}(\pO)$ be defined by \eqref{H^t_v},
where existence and continuity of $\gamma_{-1}: H^{-t-\ha}(\pO)\to H^{-t}(\Omega)$ is proved in Lemma
\ref{LipExt}. Observe that
 $$
|\langle v,w\rangle_\pO|\le \|g\|_{H^t(\R^n)}
\|w\|_{H^{-t-\ha}(\pO)} \|\gamma_{-1}\|_{ H^{-t-\ha}(\pO)\to
H^{-t}(\R^n)},
 $$
so $\|v\|_{H^{t+\ha}(\pO)}\le \|\gamma_{-1}\|_{ H^{-t-\ha}(\pO)\to
H^{-t}(\R^n)}\ \|g\|_{H^t(\R^n)}\le C \|g\|_{H^t(\R^n)}$, where
$C$ is independent of $t$ due to Lemma \ref{LipExt} if $\gamma_{-1}$ is chosen as in that lemma. We also have
that
 $$
\langle g, W \rangle_{\R^n} - \langle v, \gamma W \rangle_\pO=
 \langle g, \rho \rangle_{\R^n} \quad \forall\
  W\in H^{-t}(\R^n),
 $$
where
 $$
\rho=W-\gamma_{-1}\gamma W\in H^{-t}(\R^n).
 $$
Then we have $\gamma\rho=0$, which due to Theorems \ref{McL3.40m-1}, \ref{McL3.40m-2}
implies $\tilde{\rho}^\pm\in \s{H}^{-t}(\Omega^\pm)$, where
$\tilde{\rho}^\pm$ are extensions of $\rho|_{\Omega^\pm}$ by zero
outside $\Omega^\pm$, and $\rho=\tilde{\rho}^+ + \tilde{\rho}^-$. Thus
there exist sequences $\{\rho^\pm_k\}\in {\cal D}(\Omega^\pm)$ converging
to $\tilde{\rho}^\pm$ in $\s{H}^{-t}(\Omega^\pm)$, implying $\langle g,
\rho \rangle_{\R^n}=0$ since $g\in H^t_\pO$, and thus ansatz
\eqref{H^t_}. To prove that $v$ is uniquely determined by $g$ , i.e., independent of $\gamma_{-1}$, let us consider $v'$ and $v''$ corresponding to different operators $\gamma_{-1}'$ and $\gamma_{-1}''$. Then by \eqref{H^t_},
 \begin{multline*}
\langle v'-v'', w\rangle_\pO\quad =\langle \gamma_{-1}^{*\prime}g-\gamma_{-1}^{*\prime\prime}g, w\rangle_\pO\quad =
\langle g, \gamma_{-1}'w - \gamma_{-1}''w \rangle_{\R^n}\\
 =\langle v', \gamma (\gamma_{-1}'w - \gamma_{-1}''w)\rangle_\pO=0 \quad\forall\
  w\in H^{-t-\ha}(\pO).
 \end{multline*}

It remains to deal with the case $t=-\ha$ in (i). Let $g\in
H^{-\ha}_{\pO}$. Since $H^{-\ha}_{\pO}\subset H^t_{\pO}$ for $-\tha< t< -\ha$,
then $g=\gamma^*v$ for some $v\in H^{t+\ha}(\pO)$ $\forall t\in (-\tha, -\ha)$, and $\|g\|_{H^{t}_{\pO}}=\|\gamma^*v\|_{H^{t}_{\pO}}\ge C'\sqrt{C_{-t}}\,\|v\|_{H^{\ha+t}(\pO)}$ owing to Lemma \ref{tauunb}. Since $C_{-t}\to\infty$ as $t\nearrow -\ha$, this means $\|v\|_{H^{\ha+t}(\pO)}\to 0$ as $t\nearrow -\ha$ implying $v=0$.
\end{proof}
 Combining \eqref{H^t_} and \eqref{H^t_v} we have the following useful statement.
\begin{corollary}\label{C2.9}
Let $\Omega$ be a Lipschitz domain  in $\R^n$.
If $g\in H^t_{\pO}$ with $-\tha< t< -\ha$, then $g=\gamma^*\gamma^*_{-1}g$ for any choice of $\gamma^*_{-1}$.
\end{corollary}

{
\begin{theorem}
\label{H0=H}
Let $\Omega$ be a Lipschitz domain  in $\R^n$ and $s\le\ha$. Then
$\D(\Omega)$ is dense in $H^s(\Omega)$, i.e.,
$H^s(\Omega)=H_0^s(\Omega)$.
\end{theorem}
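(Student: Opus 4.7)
The plan is to apply the Hahn--Banach theorem. By definition, $H_0^s(\Omega)$ is the closure of $\D(\Omega)$ in $H^s(\Omega)$, so the equality $H^s(\Omega)=H_0^s(\Omega)$ is equivalent to showing that every continuous linear functional on $H^s(\Omega)$ which annihilates $\D(\Omega)$ must be the zero functional.

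First I would identify the topological dual of $H^s(\Omega)$ with $\s{H}^{-s}(\Omega)$ via the pairing $\langle\cdot,\cdot\rangle_\Omega$ from \eqref{dp3}. Concretely, every bounded linear functional $\ell$ on $H^s(\Omega)$ has the form $\ell(u)=\langle u,v\rangle_\Omega$ for a unique $v\in \s{H}^{-s}(\Omega)\subset H^{-s}(\R^n)$. The justification is that $H^s(\Omega)$ is the quotient of $H^s(\R^n)$ by the subspace $\{W\in H^s(\R^n):W|_\Omega=0\}$, whose annihilator in $H^{-s}(\R^n)$ coincides with the set of distributions supported in $\ov{\Omega}$, i.e., with $\s{H}^{-s}(\Omega)$.

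Next, if such an $\ell$ vanishes on $\D(\Omega)$, then $\langle \phi,v\rangle_{\R^n}=0$ for every $\phi\in\D(\Omega)$, which means $v|_\Omega=0$ as a distribution on $\Omega$. Combined with $\supp v\subset\ov{\Omega}$, this forces $\supp v\subset\pO$, so $v\in H^{-s}_{\pO}$.

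Finally, since $s\le\ha$ gives $-s\ge-\ha$, Theorem \ref{H_F=0}(i) applies and yields $H^{-s}_{\pO}=\{0\}$. Hence $v=0$ and $\ell=0$, so $\D(\Omega)$ is dense in $H^s(\Omega)$ by Hahn--Banach. All the analytic content of the argument is packaged into Theorem \ref{H_F=0}(i); once the duality $\s{H}^{-s}(\Omega)\cong H^s(\Omega)^*$ has been set up, the remainder is a short formal verification, and I do not anticipate any real obstacle beyond being careful that the identification of the dual is valid for the full range $s\le\ha$ (including the endpoint $s=\ha$, which is matched by the endpoint $t=-\ha$ allowed in Theorem \ref{H_F=0}(i)).
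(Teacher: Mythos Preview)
Your proposal is correct and follows essentially the same approach as the paper: the paper also argues via Hahn--Banach, identifying $H^s(\Omega)^*=\s H^{-s}(\Omega)$, noting that any $w$ in this dual that annihilates $\D(\Omega)$ must lie in $H^{-s}_{\pO}$, and then invoking Theorem~\ref{H_F=0}(i) to conclude $w=0$. The only difference is that the paper additionally cites \cite[Theorem 3.40(i)]{McLean2000} for the subrange $0\le s\le\ha$, but this is redundant since the duality argument already covers it.
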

\begin{proof}
The proof for  $0\le s\le\ha$ is available in \cite[Theorem
3.40(i)]{McLean2000}.
To prove the statement for any $s\le\ha$ we remark that
if $w\in H^s(\Omega)^*=\s H^{-s}(\Omega)$ satisfies $\langle
w,\phi\rangle=0$ for all $\phi\in\D(\Omega)$, then $w\in
H^{-s}_\pO$ and Theorem~\ref{H_F=0}(i) implies $w=0$. Hence,
$\D(\Omega)$ is dense in $H^s(\Omega)$, i.e.,
$H^s(\Omega)=H_0^s(\Omega)$.
\end{proof}

Theorem~\ref{H0=H} implies that for any $u\in\D(\ov\Omega)$ and $s\le\ha$ there exists a sequence $\{\phi_k\}\in\D(\Omega)$ converging to $u$ in $H^s(\Omega)$. Evidently $\phi_k|_\pO$ converges to 0 in $H^{\sigma}(\pO)$ for any $\sigma$  since $\phi_k|_\pO=0$. On the other hand,  $u\in\D(\ov\Omega)$ is the limit in $H^s(\Omega)$ of the sequence $\{\phi_k'\}=u$, meaning that $\phi_k'|_\pO$ converges in $H^{\sigma}(\pO)$ to $u|_\pO$, which is generally non-zero. This leads to the following conclusion of non-existence.
\begin{corollary}\label{Trasenonex}
For $s\le\ha$ the trace operators
$\gamma^{\pm}: H^s(\Omega^\pm)\to H^{\sigma}(\pO)$, understood as in Definition~\ref{TraceDef}, do not exist for any $\sigma$.
\end{corollary}

\begin{remark}
(i) Evidently, Corollary \ref{Trasenonex} holds also if the space $H^{\sigma}(\pO)$ is replaced with any Banach space of distributions on $\pO$.

(ii) The trace operator $\gamma^{\pm}: B(\Omega^\pm)\to H^{\sigma}(\pO)$ can, of course, still exist on some Banach subspaces on $\Omega^\pm$, $B(\Omega^\pm)\subset H^s(\Omega^\pm)$, $s\le\ha$, with the norms stronger than the norm in $H^s(\Omega^\pm)$, particularly on $H^t(\Omega^\pm)$, $t>\ha$.
\end{remark}

The following two statements give conditions when distributions from $H^s(\Omega)$ can be extended to distributions from $\s H^s(\Omega)$ and when the extension can be written in terms of a linear bounded operator. The first of them can be considered as a counterpart of Theorem~\ref{McL3.40m-1} for negative $s$.

\begin{lemma}\label{ExtH}
Let $\Omega$ be a Lipschitz domain,   $s<\ha$, $s\not=\ha-k$ for any integer $k>0$. Then for any $g\in H^s(\Omega)$ there exists $\tilde g\in \s H^s(\Omega)$ such that $g=\tilde g|_\Omega$ and $\|\tilde g\|_{\s H^s(\Omega)}\le C\|g\|_{H^s(\Omega)}$, where $C>0$ does not depend on $g$.
\end{lemma}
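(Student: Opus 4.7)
The plan is to reduce to the range $s\in(-\ha,\ha)$, where $\s H^s(\Omega)$ and $H^s(\Omega)$ coincide as normed spaces with equivalent norms, and then to treat $s<-\ha$ by induction on the integer $k\ge 0$ with $s\in(\ha-k-1,\ha-k)$. The excluded half-integer values $s=\ha-k'$ for positive $k'$ are exactly the thresholds at which the identification $\s H^s(\Omega)\cong H^s(\Omega)$ fails, and excluding them is what keeps the inductive hypothesis available as $k$ grows.

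For the base case $s\in[0,\ha)$, the zero extension $E_0g(x):=g(x)$ for $x\in\Omega$ and $E_0g(x):=0$ for $x\in\Omega^-$ is bounded $H^s(\Omega)\to\s H^s(\Omega)$ by \cite[Theorem 3.33]{McLean2000}. For $s\in(-\ha,0)$ I would dualise: since $-s\in(0,\ha)$, the preceding step gives $E_0\colon H^{-s}(\Omega)\to\s H^{-s}(\Omega)$ a bounded isomorphism. For any $\phi\in\D(\Omega)$ and $W\in H^{-s}(\R^n)$ the distribution $W-E_0(W|_\Omega)$ is supported in $\R^n\setminus\Omega$, disjoint from $\supp\phi$, hence
\bes
\langle\phi,W\rangle_{\R^n}=\langle\phi,E_0(W|_\Omega)\rangle_{\R^n},
\ees
and the $H^s(\Omega)$-$\s H^{-s}(\Omega)$ duality in \eqref{dp3} bounds this by $C\|\phi\|_{H^s(\Omega)}\|W\|_{H^{-s}(\R^n)}$. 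Taking the supremum in $W$ yields $\|\phi\|_{H^s(\R^n)}\le C\|\phi\|_{H^s(\Omega)}$, and density of $\D(\Omega)$ in $H^s(\Omega)$ from Theorem~\ref{H0=H} lets me extend $E_0$ by continuity to the desired bounded operator $H^s(\Omega)\to\s H^s(\Omega)$.

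For the inductive step at $s\in(\ha-k-1,\ha-k)$ with $k\ge 1$, I would fix $G\in H^s(\R^n)$ with $G|_\Omega=g$ and $\|G\|_{H^s(\R^n)}\le 2\|g\|_{H^s(\Omega)}$, set $G_0:=(I-\Delta)^{-1}G\in H^{s+2}(\R^n)$, and write $G=G_0-\sum_{j=1}^n\pa_j(\pa_j G_0)$. Restricting to $\Omega$ gives the decomposition $g=g_0+\sum_j\pa_j g_j$ with $g_0:=G_0|_\Omega$ and $g_j:=-\pa_j G_0|_\Omega$ in $H^{s+1}(\Omega)$, together with $\|g_0\|_{H^{s+1}(\Omega)}+\sum_j\|g_j\|_{H^{s+1}(\Omega)}\le C\|g\|_{H^s(\Omega)}$. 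The standing hypothesis on $s$ passes to $s+1$ (the shift keeps $s+1<\ha$ and avoids the half-integer thresholds), so the induction produces $\tilde g_0,\tilde g_j\in\s H^{s+1}(\Omega)$ extending them with matching bounds; $\tilde g:=\tilde g_0+\sum_j\pa_j\tilde g_j$ then lies in $\s H^s(\Omega)$ because $\pa_j$ preserves the support condition, and satisfies $\tilde g|_\Omega=g_0+\sum_j\pa_j g_j=g$ with the required norm estimate. The main obstacle is the duality step in the base case $s\in(-\ha,0)$, where one must line up the pairing identifications correctly; once this is done, the induction itself is routine, and the role of the exclusion $s\ne\ha-k'$ is exactly to ensure the hypothesis is inherited at $s+1$.
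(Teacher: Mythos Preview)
Your argument is correct, but the paper's proof is considerably shorter and takes a different route. The paper observes that $g\in H^s(\Omega)=[\s H^{-s}(\Omega)]^*$, identifies $\s H^{-s}(\Omega)$ with the closed subspace $H_0^{-s}(\Omega)\subset H^{-s}(\Omega)$ (via \cite[Theorem~3.33]{McLean2000}, which is precisely where the exclusion $s\ne\ha-k$ enters, since it is needed for the norm equivalence), and then applies the Hahn--Banach theorem to extend $g$ to a functional $\tilde g\in[H^{-s}(\Omega)]^*=\s H^s(\Omega)$ with the same norm. This handles all admissible $s<\ha$ in one stroke. Your approach instead establishes the range $|s|<\ha$ directly (zero extension and its dual) and then climbs down by induction using the decomposition $g=g_0+\sum_j\pa_j g_j$ coming from $(I-\Delta)^{-1}$. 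The paper's route is abstract and nonconstructive (Hahn--Banach gives no linear operator); yours is longer but more explicit, and with the minor adjustment of replacing the ad hoc choice of $G$ by the canonical minimal-norm extension (the $(I-P)$ map defining the $H^s(\Omega)$ inner product), it would actually produce a bounded \emph{linear} extension operator, thus anticipating part of Theorem~\ref{ExtOper}.
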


\begin{proof}
Any distribution $g\in H^s(\Omega)$ is a bounded linear functional on  $\s H^{-s}(\Omega)$. On the other hand, for any $v\in{H}_0^{-s}(\Omega)\subset {H}^{-s}(\Omega)$ its zero extension  $\tilde v=E_0 v$ belongs to $\s{H}^{-s}(\Omega)$ with
 \be\|\tilde v\|_{\s H^{-s}(\Omega)}\le C \|v\|_{H^{-s}(\Omega)}\label{tnormest}\ee
for $s\le 0$, $s\not=\ha-k$, by Theorem~\ref{McL3.40m-1}. This holds true also for $0<s<\ha$ since then $\s{H}^{-s}(\Omega)=[{H}^{s}(\Omega)]^*=[{H}^{s}_0(\Omega)]^*
=[\s{H}^{s}(\Omega)]^*={H}^{-s}(\Omega)$ by Theorems \ref{H0=H} and \ref{McL3.40m-1},
while the extension $\tilde v\in \s H^{-s}(\Omega)$ is defined as
 \bes
\langle\tilde v,w\rangle:=\langle v,E_0 w\rangle\quad \forall\ w\in H^{s}(\Omega), \quad 0<s<\ha,
 \ees
and by Theorems \ref{H0=H} and \ref{McL3.40m-1},
 \begin{multline*}
\|\tilde v\|_{\s H^{-s}(\Omega)}=
\sup_{w\in H^{s}(\Omega)\backslash\{0\}} \frac{|\langle\tilde v,w\rangle|}{\|w\|_{ H^{s}(\Omega)}}=
\sup_{w\in H^{s}(\Omega)\backslash\{0\}} \frac{|\langle v,E_0 w\rangle|}{\|w\|_{ H^{s}(\Omega)}}\\
\le
C\sup_{w\in H^{s}(\Omega)\backslash\{0\}} \frac{|\langle v,E_0 w\rangle|}{\|E_0w\|_{ \s H^{s}(\Omega)}}\le
 C \|v\|_{H^{-s}(\Omega)}.
 \end{multline*}
giving estimate \eqref{tnormest}.

Thus  the functional $g\in H^s(\Omega)$ continuous on $\s H^{-s}(\Omega)$ and thus on $H^{-s}_0(\Omega)$ can be extended by the Hahn-Banach theorem to a functional $\tilde g\in \s H^{s}(\Omega)$ continuous on $H^{-s}(\Omega)$ such that $\|\tilde g\|_{\s H^s(\Omega)}=\|\tilde g\|_{[H^{-s}(\Omega)]^*}= \|g\|_{[H^{-s}_0(\Omega)]^*}$. Then by estimate \eqref{tnormest} for $s<\ha$, $s\not=\ha-k$, we have,
$$
\|g\|_{[H^{-s}_0(\Omega)]^*}=
\sup_{v\in H_0^{-s}(\Omega)\backslash\{0\}} \frac{|\langle g,v\rangle|}{\|v\|_{ H_0^{-s}(\Omega)}} \le C\sup_{\tilde v\in \s H^{-s}(\Omega)\backslash\{0\}}
  \frac{|\langle g,\tilde v\rangle|}{\|\tilde v\|_{\s H^{-s}(\Omega)}}\le
C\|g\|_{[\s H^{-s}(\Omega)]^*}= C\|g\|_{H^s(\Omega)},
$$
which completes the proof.
\end{proof}

\begin{theorem}\label{ExtOper}
Let $\Omega$ be a Lipschitz domain and  $-\tha< s<\ha$, $s\not=-\ha$. There exists a bounded linear extension operator
$\s{E}^s : H^s(\Omega) \to \s{H}^s(\Omega)$, such that
$\s{E}^sg|_\Omega=g$, $\forall\ g\in H^s(\Omega)$. For $-\ha< s<\ha$ the extension operator is unique,  $(\s E^s)^*=\s E^{-s}$ and
 \be\|\s E^s g\|_{\s H^{s}(\Omega)}\le C \|g\|_{H^{s}(\Omega)},\label{tnormest3}
 \ee
where $C$ depends only on $s$ and on the maximum of the Lipschitz constants of the representation functions $\zeta_j$ for the boundary $\pO$, see Section~\ref{S2.1}.
\end{theorem}
\begin{proof}
If $0\le s<\ha$, then $\s{H}^s(\Omega)=\{E_0 u,\ u\in{H}^s(\Omega)\}$,  which implies that one can take $\s{E}^s=E_0$, where the operator $E_0: H^s(\Omega)\to\s H^s(\Omega)$ of extension by zero is continuous by the Theorems \ref{McL3.40m-1} and \ref{H0=H} with the estimate \eqref{tnormest3} following from estimate \eqref{tnormest2}.

If $-\ha< s<0$, we define $\s{E}^s$ as
 $$
\langle\s{E}^s g,v\rangle_{\Omega}:=\langle g,{E}_0 v\rangle_{\Omega},\quad
\forall g\in{H}^s(\Omega),\ \forall v\in{H}^{-s}(\Omega),
 $$
 i.e., $\s{E}^s={E}_0 ^*=(\s{E}^{-s})^*$, which is continuous with the estimate \eqref{tnormest3} following from  the previous paragraph.

Theorem \ref{H_F=0} implies that the extension
operator $\s{E}^s : H^s(\Omega) \to \s{H}^s(\Omega)$ is unique for $-\ha<
s<\ha$.

Let now $-\tha< s<-\ha$. For $s$ in this range, the trace operator
$\gamma^{+}: H^{-s}(\Omega)\to H^{-s-\ha}(\pO)$ is bounded due to
\cite[Lemma 3.6]{Costabel1988} (see also \cite[Theorem
3.38]{McLean2000}), and there exists a bounded right inverse to the trace operator
$\gamma_{-1}: H^{-s-\ha}(\pO)\to H^{-s}(\Omega)$, see Lemma \ref{LipExt}.
Then  $(I-\gamma_{-1}\gamma^{+})$ is a bounded
projector from $H^{-s}(\Omega)$ to
${H}^{-s}_0(\Omega)=\s{H}^{-s}(\Omega)$ due to Theorem \ref{McL3.40m-1}. Thus any functional $v\in
H^{s}(\Omega)$ can be continuously mapped into the functional
$\tilde{v}\in \s{H}^{s}(\Omega)$ such that
$\langle\tilde{v},u\rangle=\langle v,E_0(I-\gamma_{-1}\gamma^{+})u\rangle$ for any $u\in H^{-s}(\Omega)$. Since
$\tilde{v}u=vu$ for any $u\in \s{H}^{-s}(\Omega)$, we have,
$$\s{E}^s:=[E_0(I-\gamma_{-1}\gamma^{+})]^* : H^s(\Omega) \to \s{H}^s(\Omega)$$
is a
bounded extension operator.
\end{proof}

Since the extension
operator $\s{E}^s : H^s(\Omega) \to \s{H}^s(\Omega)$ is unique for $-\ha<
s<\ha$, we will call it {\em canonical extension operator} (as opposite to other possible extensions from $H^s(\Omega)$ to $\s{H}^\sigma(\Omega)$, $\sigma<-\ha$). For $-\tha<
s<-\ha$, on the other hand, the operator $\gamma_{-1}:
H^{-s-\ha}(\pO)\to H^{-s}(\Omega)$ in the proof of Theorem
\ref{ExtOper}  is not unique, implying non-uniqueness of $\s{E}^s
: H^s(\Omega) \to \s{H}^s(\Omega)$.

We will later need the following two results.
\begin{lemma}\label{Hsto1}
Let $\Omega$ and $\Omega'\subset\Omega$ be open sets, and $s\le 0$. If $u\in H^s(\Omega)$, then $\|u\|_{H^s(\Omega')}\to 0$ as the Lebesgue measure of $\Omega'$ tends to zero.
\end{lemma}
\begin{proof}
Let $\phi\in \D(\ov\Omega)$. Then
$$
\|u\|_{H^s(\Omega')}\le \|u-\phi\|_{H^s(\Omega')}+\|\phi\|_{H^s(\Omega')}\le
\|u-\phi\|_{H^s(\Omega)}+\|\phi\|_{L_2(\Omega')}.
$$
For any $\epsilon>0$ we can chose $\phi$ such that $\|u-\phi\|_{H^s(\Omega)}<\epsilon/2$ due to the density of $\D(\ov\Omega)$ in $H^s(\Omega)$ and then chose $\Omega'$ with sufficiently small measure so that $\|\phi\|_{L_2(\Omega')}< \epsilon/2$.
\end{proof}


\begin{lemma}\label{Hsto3}
Let  $\Omega_k\subset\Omega$ be a sequence of Lipschitz domains converging to a Lipschitz domain $\Omega$ and $-\ha< s<1/2$.
If $u\in H^s(\Omega)$ and $\tilde u_k=\s E^s u|_{\Omega_k}$, then  there exists a constant $C$ independent of $u$ and $k$ such that $\|\tilde u_k\|_{\s H^s(\Omega_k)}\le C\|u\|_{H^s(\Omega)}$ for all sufficiently large $k$.
\end{lemma}
\begin{proof}
By Theorem~\ref{ExtOper},
$$\|\tilde u_k\|_{\s H^s(\Omega_k)}\le C_k\|u|_{\Omega_k}\|_{H^s(\Omega_k)}\le C_k\|u\|_{H^s(\Omega)},$$
where $C_k$ depend only on $s$ and on the maximum of the Lipschitz constants of the representation functions $\zeta_{jk}$ for the boundaries $\pO_k$. By \eqref{LipLim}, the Lipschitz constants are bounded and henceforth so are $C_k$.
\end{proof}

\section{Partial differential operator extensions and co-normal
derivatives for infinitely smooth coefficients}\label{S3sm}
Let us consider in ${\Omega}$ a system of $m$ complex linear differential equations of the second order with respect to $m$ unknown functions $\{u_{i}\}_{i=1}^m=u: \Omega\to \C^m$, which for sufficiently smooth $u$ has the following strong form,
\begin{equation}
\label{2.1}  Au(x):=
 -\sum\limits_{i,j=1}^n\pa_i[  a_{ij}(x)\,\pa_j u(x)]
 +\sum\limits_{j=1}^n b_{j}(x)\,\pa_j u(x)+c(x)u(x)=
 f(x),  \;\;\;\; x \in \Omega,
\end{equation}
where $f: \Omega\to\C^m$,  $\pa_j:=\pa/\pa{x_j}$
 $(j=1,2,...,n)$, $a(x)=\{a_{ij}(x)\}_{i,j=1}^n=\{\{a_{ij}^{kl}(x)\}_{k,l=1}^m\}_{i,j=1}^n$,  $b(x)=\{\{b_{i}^{kl}(x)\}_{k,l=1}^m\}_{i=1}^n$ and $c(x)=\{c^{kl}(x)\}_{k,l=1}^m$, i.e., $a_{ij}, b_{i},c: \Omega\to\C^{m\times m}$ for fixed indices $i,j$. If $m=1$, then \eqref{2.1} is a scalar equation. In this paper we assume that $a, b, c \in C^\infty(\ov{\Omega})$; the case of non-smooth coefficients is addressed in \cite{JCE-H}, see also \cite{MikhailovArXiv2009}.

The operator $A$ is (uniformly) strongly elliptic in an open domain $\Omega$ if there exists a bounded $m\times m$ matrix-valued function $\theta(x)$ such that
\bes\label{SEC}
\RE\{\bar\zeta^\top\theta(x)\sum\limits_{i,j=1}^n a_{ij}(x)\xi_i\xi_j\zeta\}\ge C|\xi|^2|\zeta|^2
\ees
for all $x\in\Omega$, $\xi\in\R^n$ and $\zeta\in \C^m$, where $C$ is a positive constant, see e.g. \cite[Definition 3.6.1]{Hsiao-Wendland2008} and references therein. We say that the operator $A$ is uniformly strongly elliptic in a closed domain $\bar\Omega$ if its is uniformly strongly elliptic in an open domain $\Omega'\supset \bar\Omega$. We will need the strong ellipticity in relation with the solution regularity, starting from Theorem~\ref{RegThSm}.

\subsection{Partial differential operator extensions and generalized co-normal
derivative}

For  $u\in H^s(\Omega)$, $f\in H^{s-2}(\Omega)$, $s\in\R$, equation system \eqref{2.1} is understood in the distribution sense as
\bes\label{L=fdist}
    \langle Au,v \rangle_\Omega=\langle f,v\rangle_\Omega\quad \forall v\in
    {\cal D}(\Omega),
\ees
where $v: \Omega\to \C^m$ and
\begin{equation}\label{Ldist}
    \langle Au,v \rangle_\Omega:=\E(u,v)\quad \forall v\in
    {\cal D}(\Omega),
\end{equation}
 \be\label{Edef}
\E(u,v)=\E_\Omega(u,v):=
  \sum\limits_{i,j=1}^{n} \left\langle a_{ij}\pa_j u , \pa_i v\right\rangle_{\Omega}
 +\sum\limits_{j=1}^{n} \left\langle b_{j}\pa_j u  ,  v\right\rangle_{\Omega}
 +\left\langle c u  ,  v\right\rangle_{\Omega}.
 \ee
Bilinear form \eqref{Edef} is well defined for any $v\in {\cal D}(\Omega)$ and moreover, the bilinear functional $\E:\{H^s(\Omega),\widetilde H^{2-s}(\Omega)\}\to\C$ is bounded for any $s\in\R$.
Since the set ${\cal D}(\Omega)$ is dense in $\s{H}^{2-s}
(\Omega)$, expression \eqref{Ldist} defines then a bounded linear operator $A:
H^s(\Omega)\to H^{s-2}(\Omega)=[\s{H}^{2-s} (\Omega)]^*$,
$s\in\R$,
\begin{equation}\label{LH1}
    \langle Au,v \rangle_\Omega:=\E(u,v)\quad \forall v\in
    \s{H}^{2-s}(\Omega).
\end{equation}

Let now $\ha<s<\tha$. In addition to the operator $A$ defined by \eqref{LH1}, let us consider also the {\em aggregate} partial differential operator
 $\check{A}$,
defined as,
\begin{equation}\label{Ltil}
    \langle \check{A}u,v \rangle_\Omega:=\check\E(u,v)\quad \forall v\in
    {H}^{2-s}(\Omega),
\end{equation}
where
\be\label{Echeckdef}
\check\E(u,v)=\check\E_\Omega(u,v):=
  \sum\limits_{i,j=1}^{n} \left\langle \s{E}^{s-1}(a_{ij}\pa_j u) , \pa_i v\right\rangle_{\Omega}
 +\sum\limits_{j=1}^{n} \left\langle \s{E}^{s-1}(b_{j}\pa_j u ) ,  v\right\rangle_{\Omega}
 +\left\langle \s{E}^{s-1}(c u)  ,  v\right\rangle_{\Omega}
 \ee
and $\s{E}^{s-1}: H^{s-1}(\Omega)\to \s H^{s-1}(\Omega)$ is a bounded extension operator, which is unique by Theorem~\ref{ExtOper}.
Note that by \eqref{dp3} one can rewrite \eqref{Ltil} also as
\begin{equation*}\label{LH1i}
    (\check Au,v )_{\Omega}:=\Phi(u,v)\quad \forall v\in
    {H}^{2-s}(\Omega),
\end{equation*}
where $\Phi(u,v)=\ov{\check\E(u,\bar v)}$ is the sesquilinear form.

If $s=1$, i.e. $u,v\in H^1(\Omega)$, evidently
\bes
\check\E(u,v)=\E(u,v)=
  \int_\Omega\left[\sum\limits_{i,j=1}^{n}  (a_{ij}\pa_j u) \cdot \pa_i v
 +\sum\limits_{j=1}^{n} (b_{j}\pa_j u ) \cdot  v
 + c u \cdot   v\right]\,dx .
 \ees

The aggregate operator $\check{A}: H^s(\Omega)\to\s{H}^{s-2}(\Omega)=[{H}^{2-s} (\Omega)]^*$ is bounded since $\pa_i v\in H^{1-s}(\Omega)$, $v\in H^{2-s}(\Omega)\subset H^{1-s}(\Omega)$. For any $u\in H^s(\Omega)$, the
functional $\check{A}u$ belongs to $\s{H}^{s-2}(\Omega)$ and is an
extension of the functional ${A}u\in {H}^{s-2}(\Omega)$ from the
domain of definition  $\s{H}^{2-s}(\Omega)\subset {H}^{2-s}(\Omega)$ to the domain of
definition ${H}^{2-s}(\Omega)$.

The distribution $\check{A}u$ is not the only possible extension of the functional ${A}u$, and any functional of the form
\begin{equation}\label{Ext}
     \check{A}u +g,\quad  g\in
    {H}^{s-2}_{\partial\Omega}
\end{equation}
gives another extension. On the other hand, any extension of the domain
of definition of the functional ${A}u$ from $\s{H}^{2-s}(\Omega)$ to
${H}^{2-s}(\Omega)$ has evidently form \eqref{Ext}. The existence of
such extensions is provided by Lemma \ref{ExtH}.

For $u\in H^s(\Omega)$, $s>\tha$,
the strong (classical) co--normal derivative operator
 \begin{equation}\label{clCD}
T^+_cu(x) := \sum_{i,j=1}^n
a_{ij}(x)\,\gamma^+[\pa_j u(x)]\nu_i(x)
\end{equation}
is well defined on $\pO$ in the sense of traces. Here $\gamma^+[\pa_j u]\in H^{s-\tha}(\pO)\subset L_2(\pO)$ if $\tha<s<\frac{5}{2}$, while the outward (to $\Omega$) unit normal vector $\nu(x)$ at
the point $x\in \pO$ belongs to $L_\infty(\pO)$ for the Lipschitz boundary $\pO$, implying  $T^+_cu\in L_2(\pO)$.
Note that for Lipschitz domains one can not generally expect that $T^+_c u$ belongs to $H^s(\pO)$, $s>0$, even for infinitely smooth $u$.

We can  extend the definition  of
the generalized co--normal derivative, given in \cite[Lemma 4.3]{McLean2000} for $s=1$ (cf. also \cite[Lemma 2.2]{Kohr-Pintea-Wendland2010} for the generalized co--normal derivative on a manifold boundary), to a range of Sobolev
spaces as follows.

\begin{definition}\label{GCDd}
Let $\Omega$ be a Lipschitz domain, $\ha< s<\tha$,  $u\in H^s(\Omega)$, and $Au=\tilde{f}|_{_\Omega}$ in
$\Omega$ for some $\tilde{f}\in \s{H}^{s-2}(\Omega)$. Let us
define {\em the generalized co--normal derivative\ }
$T^+(\tilde{f},u) \in H^{s-\tha}(\pO)$  as
\begin{equation}
\label{Tgendef} \left\langle
 T^+(\tilde{f},u)\,,\, w
\right\rangle _{\pO}:=
\check\E(u,\gamma_{-1} w)-\langle \tilde{f},\gamma_{-1} w \rangle_\Omega
=\langle \check{A}u - \tilde{f},\gamma_{-1} w \rangle_\Omega
  \quad  \forall\ w\in H^{\tha-s} (\pO),
\end{equation}
where $\gamma_{-1} : H^{\tha-s}(\pO)\to H^{2-s}(\Omega)$ is a bounded
right inverse to the trace operator.
\end{definition}
The notation $T^+(\tilde{f},u)$ corresponds to the notation $\s T^+(\tilde{f},u)$ in \cite{MikhailovIMSE06}.

\begin{theorem}\label{GCDl}
Under the hypotheses of Definition \ref{GCDd}, the generalized
co--normal derivative
$T^+(\tilde{f},u)$ is independent of the operator $\gamma_{-1}$, the estimate
\begin{equation}\label{estimate}
\|T^+(\tilde{f},u)\|_{H^{s-\tha}(\pO)}\le
C_1\|u\|_{H^s(\Omega)} + C_2\|\tilde{f}\|_{\s{H}^{s-2}(\Omega)}
\end{equation}
takes place, and the first Green identity holds in the following
form,
\begin{equation}
\label{Tgen} \left\langle
 T^+(\tilde{f},u)\,,\, \gamma^+v
\right\rangle _{\pO}
=\check\E(u,v)-\langle \tilde{f},v \rangle_\Omega
=\langle\check{A}u - \tilde{f},v \rangle_\Omega
  \quad  
\forall\ v\in H^{2-s}(\Omega).
\end{equation}
\end{theorem}
\begin{proof}
For $s=1$ the theorem proof is available in \cite[Lemma 4.3]{McLean2000}, which idea is extended here to the whole range $\ha< s<\tha$.

By Lemma \ref{LipExt},  a bounded  operator $\gamma_{-1} :
H^{\tha-s}(\pO)\to H^{2-s}(\Omega)$ does exist. Then estimate
\eqref{estimate} follows from \eqref{Tgendef}.

To prove independence of the co-normal derivative
$T^+(\tilde{f},u)$ of $\gamma_{-1} $, let us consider two co-normal
derivatives generated by two different operators $\gamma_{-1}^\prime$ and $\gamma_{-1}^{\prime\prime}$. Then their difference is
 $$
\langle T^{\prime+}(\tilde{f},u)-T^{\prime\prime+}(\tilde{f},u),w
\rangle_\pO=\langle \check{A}u - \tilde{f},\gamma_{-1}^\prime w-\gamma_{-1}^{\prime\prime}w \rangle_\Omega
\quad  \forall\ w\in H^{\tha-s} (\pO).
 $$
By definition, $\check{A}u - \tilde{f}\in {H}_\pO^{s-2}$, which by Corollary \ref{C2.9} implies that
 \begin{multline*}
\langle \check{A}u - \tilde{f},\gamma_{-1}^\prime w-\gamma_{-1}^{\prime\prime}w \rangle_\Omega =
\langle \check{A}u - \tilde{f},\gamma_{-1}^\prime w-\gamma_{-1}^{\prime\prime}w \rangle_{\R^n} =
\langle \gamma^*\gamma^*_{-1}(\check{A}u - \tilde{f}),\gamma_{-1}^\prime w-\gamma_{-1}^{\prime\prime}w \rangle_{\R^n} =\\
\langle \gamma^*_{-1}(\check{A}u - \tilde{f}),
\gamma \gamma_{-1}^\prime w-\gamma\gamma_{-1}^{\prime\prime}w\rangle_\pO=
\langle \gamma^*_{-1}(\check{A}u - \tilde{f}),
w-w\rangle_\pO=0\quad \forall\ w\in H^{\tha-s} (\pO).
\end{multline*}

To prove \eqref{Tgen}, let $V\in H^{2-s}(\R^n)$ be such that $v=V|_\Omega$ implying $\gamma^+v=\gamma V$. Taking again into account that $\check{A}u - \tilde{f}\in H^{s-2}_\pO$,  we have by Corollary \ref{C2.9},
\begin{multline*}
\left\langle
 T^+(\tilde{f},u)\,,\, \gamma^+v \right\rangle _{\pO}
=\langle\check{A}u - \tilde{f},\gamma_{-1} \gamma^+ v \rangle_\Omega
=\langle\check{A}u - \tilde{f},\gamma_{-1} \gamma V \rangle_{\R^n}\\
=\langle\gamma^*\gamma^*_{-1}(\check{A}u - \tilde{f}),V \rangle_{\R^n}
=\langle\check{A}u - \tilde{f},V \rangle_{\R^n}=
\langle\check{A}u - \tilde{f},v \rangle_\Omega
\end{multline*}
as required.
\end{proof}

Because of the involvement of $\tilde{f}$, the
generalized co-normal derivative $T^+(\tilde{f},u)$  is
generally {\em non-linear} in $u$. It becomes linear if a linear
relation is imposed between $u$ and $\tilde{f}$ (including
behavior of the latter on the boundary $\partial\Omega$), thus
fixing an extension of
$\tilde{f}|_{_\Omega}$ into
$\s{H}^{s-2}(\Omega)$. For example, $\tilde{f}|_{_\Omega}$ can be
extended as
 $\check{f}:=\check{A}u,$
which generally does not coincide with $\tilde{f}$. Then obviously,
$T^{+}(\check{f},u)=T^{+}(\check{A}u,u)=0$, meaning that the co-normal derivatives associated with any other possible extension $\tilde{f}$ appears to be aggregated in $\check{f}$ as
 \be
\label{ftot}
\langle \check{f},v \rangle_\Omega=\langle \tilde{f},v \rangle_\Omega+ \left\langle T^+(\tilde{f},u)\,,\, \gamma^+v \right\rangle _{\pO}
 \ee
due to \eqref{Tgen}. This justifies the term {\em aggregate} for the extension $\check{f}$, and thus for the operator $\check{A}u$.

As follows from Definition~\ref{GCDd}, the generalized co-normal derivative is still linear with respect to the couple $(\tilde f, u)$, i.e.,
$$
T^+(\alpha_1\tilde{f}_1,\alpha_1u_1) + T^+(\alpha_2\tilde{f}_2,\alpha_2u_2)=
T^+(\alpha_1\tilde{f}_1+\alpha_2\tilde{f}_2,\alpha_1u_1+\alpha_2u_2)
$$
for any complex numbers $\alpha_1,\alpha_2$.

In fact, for a given function $u\in H^s(\Omega)$, $\ha<s<\tha$, any
distribution $\tau\in H^{s-\tha}(\pO)$ may be nominated as a co-normal
derivative of $u$, by an appropriate extension $\tilde{f}$ of the
distribution $Au\in {H}^{s-2}(\Omega)$ into $\s{H}^{s-2}(\Omega)$. This extension is
again given by the second Green formula \eqref{Tgen} re-written as
follows (cf. \cite[Section 2.2, item 4]{Agranovich2003RMS} for $s=1$),
\begin{equation}
\label{Luext}
 \langle \tilde{f},v \rangle_\Omega:=
 \check\E(u,v)-\left\langle \tau ,  \gamma^{+}v \right\rangle _{\pO}=
\langle \check{A}u-\gamma^{+*}\tau,v \rangle_\Omega
  \quad  
\forall\ v\in H^{2-s} (\Omega).
\end{equation}
Here the operator $\gamma^{+*} : H^{s-\tha}(\pO) \to
\s{H}^{s-2}(\Omega)$ is adjoined to the trace operator, $\langle
\gamma^{+*}\tau, v\rangle_\Omega :=\left\langle \tau , \gamma^{+}v
\right\rangle _{\pO}$ for all $\tau\in H^{s-\tha}(\pO)$ and $v\in
H^{2-s} (\Omega)$. Evidently, the distribution $\tilde{f}$ defined by
\eqref{Luext} belongs to $\s{H}^{s-2}(\Omega)$ and is an extension of
the distribution $Au$ into $\s{H}^{s-2}(\Omega)$ since $\gamma^{+}v=0$
for $v\in \s{H}^{2-s}(\Omega)$.

For $u\in C^1(\ov{\Omega})\subset H^1(\Omega)$,  one can take $\tau$ equal to the strong co-normal derivative, $T^+_c u \in L_\infty(\pO)$, and relation \eqref{Luext} can be considered as the {\em classical extension} of $f=Au\in H^{-1}(\Omega)$ to $\tilde f_c\in \s H^{-1}(\Omega)$, which is evidently linear.

\subsection{Boundary value problems}\label{NewPVPs}
Consider the BVP weak settings  for PDE system \eqref{2.1} on Lipschitz domain for $\ha<s<\tha$.

{\em The Dirichlet problem:} for $f\in H^{s-2}(\Omega)$ and $\varphi_0\in H^{s-\ha}(\pO)$, find $u\in H^s(\Omega)$ such that
\begin{eqnarray}\label{wD}
    \langle Au,v \rangle_\Omega&=&\langle f,v \rangle_\Omega\quad \forall v\in
    \s{H}^{2-s}(\Omega),\\
    \gamma^+u&=&\varphi_0\quad\mbox{on}\ \pO.\label{bD}
\end{eqnarray}

{\em The Neumann problem:} for $\check f\in \s{H}^{s-2}(\Omega)$, find $u\in H^s(\Omega)$ such that
\begin{eqnarray}\label{wN}
    \langle \check Au,v \rangle_\Omega&=&\langle \check f,v \rangle_\Omega\quad \forall v\in
    {H}^{2-s}(\Omega).
\end{eqnarray}
Here $Au$ and $\check Au$ are defined by \eqref{LH1} and \eqref{Ltil}, respectively.

To set the mixed problem, let   $\partial_D\Omega $ and $\partial_N\Omega =\pO\backslash \overline{\partial_D\Omega}$ be  nonempty, open
sub--manifolds of $\pO$, and ${H}^{s}_{0} (\Omega,\partial_D\Omega)=\{w\in {H}^{s}(\Omega): \gamma^+ w=0$ on $\partial_D\Omega\}$. We introduce the {\em mixed aggregate} operator $\check{A}_{\partial_D\Omega }: H^s(\Omega) \to
[{H}^{2-s}_{0} (\Omega,{\partial_D\Omega })]^*$, defined
as
 $$
    \langle \check{A}_{\partial_D\Omega }u,v \rangle_\Omega:=
    \langle \check{A}u,v \rangle_\Omega=
\check\E(u,v)\quad \forall\ v\in {H}^{2-s}_{0} (\Omega,\partial_D\Omega ).
 $$
The mixed operator $\check{A}_{\partial_D\Omega }$ is bounded by the same argument as the aggregate operator $\check{A}$. For any $u\in H^s(\Omega)$, the
distribution $\check{A}_{\partial_D\Omega }u$ belongs to $[{H}^{2-s}_{0} (\Omega,{\partial_D\Omega })]^*$ and is an
extension of the functional ${A}u\in {H}^{s-2}(\Omega)$ from the
domain of definition  $\s{H}^{2-s}(\Omega)={H}^{2-s}_0(\Omega)\subset {H}^{2-s}_{0} (\Omega,\partial_D\Omega)$ to the domain of
definition ${H}^{2-s}_{0} (\Omega,\partial_D\Omega )$, and a restriction of the functional $\check{A}u\in \s{H}^{s-2}(\Omega)$ from the
domain of definition  ${H}^{2-s}(\Omega)\supset {H}^{2-s}_{0} (\Omega,\partial_D\Omega)$ to the domain of
definition ${H}^{2-s}_{0} (\Omega,\partial_D\Omega)$.

For $v\in {H}^{2-s}_{0} (\Omega,\partial_D\Omega )$, the trace $\gamma^+v$ belongs to $\s H^{\tha-s}(\partial_N\Omega )$. If $Au=\tilde{f}|_{_\Omega}$ in
$\Omega$ for some $\tilde{f}\in \s{H}^{s-2}(\Omega)$, then the first Green identity \eqref{Tgen} gives,
\begin{eqnarray}
   && \langle \check{A}_{\partial_D\Omega }u,v \rangle_\Omega=
   \langle \check f_m,v \rangle_\Omega,\nonumber\\
 \label{ftotM}
&&\langle \check{f}_m,v \rangle_\Omega=\langle \tilde{f},v \rangle_\Omega+ \left\langle T^+(\tilde{f},u)\,,\, \gamma^+v \right\rangle _{\partial_N\Omega }
\quad \forall\ v\in {H}^{2-s}_{0} (\Omega,\partial_D\Omega ),
\end{eqnarray}
where, evidently, $\check{f}_m\in [{H}^{2-s}_{0} (\Omega,{\partial_D\Omega })]^*$. This leads to the following weak setting.

{\em The mixed (Dirichlet-Neumann) problem:} for $\check f_m\in [{H}^{2-s}_{0} (\Omega,{\partial_D\Omega })]^*$ and $\varphi_0\in H^{s-\ha}(\partial_D\Omega)$, find $u\in H^s(\Omega)$ such that
\begin{eqnarray}\label{wM}
    \langle \check A_{\partial_D\Omega }u,v \rangle_\Omega&=&\langle \check f_m,v \rangle_\Omega\quad \forall v\in
    {H}^{2-s}_{0} (\Omega,\partial_D\Omega ),\\
    \gamma^+u&=&\varphi_0\quad\mbox{on}\ \partial_D\Omega.\label{wM2}
\end{eqnarray}

The Neumann and the mixed problems are formulated in terms of the aggregate right hand sides  $\check f$ and $\check f_m$, respectively, prescribed on their own, i.e., without necessary splitting them into the right hand side inside the domain $\Omega$ and the part related with the prescribed co-normal derivative. If a right hand side extension $\tilde f$ and an associated non-zero generalized co-normal derivative $T^+(\tilde{f},u)$ are prescribed instead, then $\check f$ and $\check f_m$ can be expressed through them by relations \eqref{ftot}, \eqref{ftotM}. Thus the co-normal derivative does not enter, in fact,  the weak settings of the Dirichlet, Neumann or mixed problem, implying that the non-uniqueness of  $T^+(\tilde{f},u)$ for a given function $u\in H^s(\Omega)$, $\ha<s<\tha$, does not influence the BVP weak settings, (cf. \cite[Section 2.2, item 4]{Agranovich2003RMS} for $s=1$).
On the other hand, for a given $u\in H^s(\Omega)$ the aggregate right hand sides $\check f$ and $\check f_m $ are uniquely determined by  \eqref{wN}, \eqref{wM}, as are, of course, $f$ and $\varphi_0$ by \eqref{wD},  \eqref{bD}/\eqref{wM2}.

Note that one can take $v=\bar w$ to make the settings \eqref{wD}-\eqref{bD}, \eqref{wN} and \eqref{wM}-\eqref{wM2} look closer to the usual variational formulations, cf. e.g. \cite{LiMa1}.

\subsection{Canonical co-normal derivative}

As we have seen above, for an arbitrary $u\in H^s(\Omega)$, $\ha<s<\tha$, the co-normal derivative $T^+(\tilde{f},u)$ is generally  non-uniquely determined by $u$. An exception is $T^{+}(\check{A}u,u)\equiv 0$ but such co-normal derivative evidently differs from the strong co-normal derivative ${T}^{+}_c u$, given by \eqref{clCD} for sufficiently smooth $u$.
Another one way of making generalized co-normal derivative unique in $u\in H^1(\Omega)$ was presented in \cite[Lemma 5.1.1]{Hsiao-Wendland2008} and is in fact associated with an extension of
$Au\in H^{-1}(\Omega)$ to $\tilde f\in\widetilde H^{-1}(\Omega)$, such that $\tilde f$ is orthogonal in $H^{-1}(\R^n)$ to $H^{-1}_\pO\subset H^{-1}(\R^n)$. However it appears (see Lemma \ref{contrHW}), that even for infinitely smooth functions $f$ such extension $\tilde f$ does not generally belong to $L_2(\R^n)$, which implies that  the so-defined co-normal derivative operator $\tau$ from \cite[Lemma 5.1.1]{Hsiao-Wendland2008} is not  a bounded extension of the strong co-normal derivative operator.

Nevertheless, it is still possible to point out some subspaces of $H^s(\Omega)$, $\ha<s<\tha$, where a unique definition of the co-normal derivative by $u$ is possible and leads to the strong co-normal derivative for sufficiently smooth $u$. We define below one such sufficiently wide subspace.


\begin{definition}\label{Hst}
Let $s\in\R$ and $A_*:H^s(\Omega)\to {\cal D}^*(\Omega)$ be a
linear operator. For $t\ge -\ha$, we introduce a space
 $
 H^{s,t} (\Omega;A_*):=\{g:\;g\in H^s (\Omega),\ A_*g|_\Omega=\tilde{f}_g|_\Omega,\
\tilde{f}_g\in
\s{H}^{t}(\Omega)\}
 $
equipped with the graphic norm,
 $ \|g\|_{ H^{s,t} (\Omega;A_*)}^2:= \|g\|_{ H^s (\Omega)}^2+ \|\tilde{f}_g\|_{\s{H}^{t}(\Omega)}^2
 $.
 \end{definition}

The distribution  $\tilde{f}_g\in \s{H}^{t}(\Omega)$, $t\ge -\ha$,
in the above definition is an extension of the distribution
$A_*g|_\Omega\in {H}^{t}(\Omega)$, and the extension  is unique
(if it does exist), since otherwise the difference between any two
extensions  belongs to ${H}^{t}_\pO$ but ${H}^{t}_\pO=\{0\}$ for
$t\ge -\ha$ due to the Theorem \ref{H_F=0}.  The uniqueness
implies that the norm $ \|g\|_{ H^{s,t} (\Omega;A_*)}$ is well defined.
Note that another subspace of such kind, where $A_*g|_\Omega$
belongs to $L_p(\Omega)$ instead of $H^{t}(\Omega)$, was presented
in \cite[p. 59]{Grisvard1985}. A particular case, $H^{s,0} (\Omega;A_*)$, was extensively employed in \cite{Costabel1988}.

If $s_1\le s_2$ and $t_1\le t_2$, then we have the
embedding, $ H^{s_2,t_2} (\Omega;A_*)\subset H^{s_1,t_1}
(\Omega;A_*)$.

\begin{remark}\label{R3.4} If $s\in\R$, $-\ha<t<\ha$, and $A_*:H^s(\Omega)\to {H}^{t}(\Omega)$ is a linear continuous operator, then
$H^{s,t} (\Omega;A_*)=H^s(\Omega)$ by Theorem~\ref{ExtOper}.
\end{remark}

\begin{lemma} Let $s\in\R$.
If a linear operator $A_*: H^s(\Omega)\to \D^*(\Omega)$ is
continuous,
then the space $H^{s,t} (\Omega;A_*)$ is complete for any $t\ge
-\ha$.
\end{lemma}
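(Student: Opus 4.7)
The plan is to take a Cauchy sequence $\{g_k\}$ in $H^{s,t}(\Omega;L_*)$, produce a candidate limit using the completeness of the two component spaces $H^s(\Omega)$ and $\widetilde{H}^t(\Omega)$, and then identify the two limits as the required pair $(g,\tilde f_g)$ via the uniqueness of the extension granted by Theorem~\ref{H_F=0}.

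More concretely, from the graphic norm both $\{g_k\}$ is Cauchy in $H^s(\Omega)$ and $\{\tilde f_{g_k}\}$ is Cauchy in $\widetilde{H}^t(\Omega)$. By completeness of those spaces, there exist $g\in H^s(\Omega)$ and $\tilde f\in \widetilde{H}^t(\Omega)$ such that $g_k\to g$ in $H^s(\Omega)$ and $\tilde f_{g_k}\to \tilde f$ in $\widetilde{H}^t(\Omega)$. I then want to show $g\in H^{s,t}(\Omega;L_*)$ with $\tilde f_g=\tilde f$, which will automatically give convergence in the graphic norm.

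For this identification, fix an arbitrary $\phi\in\D(\Omega)$. By definition of $H^{s,t}(\Omega;L_*)$ we have $L_*g_k\big|_\Omega=\tilde f_{g_k}\big|_\Omega$, so
\begin{equation*}
\langle L_*g_k,\phi\rangle_\Omega=\langle \tilde f_{g_k},\phi\rangle_\Omega=\langle \tilde f_{g_k},\phi\rangle_{\R^n}.
\end{equation*}
Because $L_*:H^s(\Omega)\to \D^*(\Omega)$ is continuous and $g_k\to g$ in $H^s(\Omega)$, the left-hand side converges to $\langle L_*g,\phi\rangle_\Omega$. Since $\phi\in\D(\Omega)\subset H^{-t}(\R^n)$ and $\tilde f_{g_k}\to \tilde f$ in $\widetilde{H}^t(\Omega)\subset H^t(\R^n)$, the right-hand side converges to $\langle\tilde f,\phi\rangle_{\R^n}=\langle \tilde f|_\Omega,\phi\rangle_\Omega$. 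Hence $L_*g|_\Omega=\tilde f|_\Omega$ as distributions on $\Omega$.

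This shows $g\in H^{s,t}(\Omega;L_*)$ with $\tilde f$ serving as the extension of $L_*g|_\Omega$ into $\widetilde{H}^t(\Omega)$. The uniqueness of that extension for $t\ge -\ha$, already established just after Definition~\ref{Hst} using Theorem~\ref{H_F=0}, forces $\tilde f_g=\tilde f$. Then
\begin{equation*}
\|g_k-g\|_{H^{s,t}(\Omega;L_*)}^2=\|g_k-g\|_{H^s(\Omega)}^2+\|\tilde f_{g_k}-\tilde f_g\|_{\widetilde{H}^t(\Omega)}^2\to 0,
\end{equation*}
which completes the proof. The only non-routine ingredient is exactly this appeal to uniqueness of the extension; without the hypothesis $t\ge -\ha$, the limit $\tilde f$ would not be determined by $g$ and the graphic norm would fail to be well-defined on the limit.
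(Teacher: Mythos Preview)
Your proof is correct and follows essentially the same approach as the paper's: take a Cauchy sequence, use completeness of $H^s(\Omega)$ and $\widetilde H^t(\Omega)$ to obtain limits $g$ and $\tilde f$, then use continuity of $L_*$ on test functions to identify $L_*g|_\Omega=\tilde f|_\Omega$. Your explicit invocation of the uniqueness of the extension (via Theorem~\ref{H_F=0}) and the final verification of convergence in the graphic norm are slightly more detailed than the paper's version, but the argument is the same.
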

\begin{proof}
Let $\{g_k\}$ be a Cauchy sequence in $H^{s,t} (\Omega;A_*)$. Then
there exists a Cauchy sequence $\{\tilde{f}_{k}\}$ in
$\s{H}^{t}(\Omega)$ such that
$\tilde{f}_{k}|_\Omega=A_*g_k|_\Omega$. Since $H^s(\Omega)$ and
$\s{H}^{t}(\Omega)$ are complete,  there exist elements
$g_0\in H^s(\Omega)$ and $\tilde{f}_0\in \s{H}^{t}(\Omega)$ such
that $\|g_k-g_0\|_{H^s(\Omega)}\to 0$,
$\|\tilde{f}_{k}-\tilde{f}_0\|_{\s{H}^{t}(\Omega)}\to 0$ as
$k\to\infty$. On the other hand, continuity of $A_*$ implies that
$|\langle A_*(g_k-g_0),\phi\rangle|\to 0$ for any $\phi\in \D(\Omega)$.
Taking into account that $A_*g_k|_\Omega=\tilde{f}_{k}|_\Omega$,
we obtain
\begin{eqnarray*}
|\langle \tilde{f}_0-A_*g_0,\phi\rangle|&\le&
|\langle\tilde{f}_0-\tilde{f}_{k},\phi\rangle |+|\langle \tilde{f}_k-A_*g_0,\phi\rangle|\\
&\le&\|\tilde{f}_0-\tilde{f}_{k}\|_{\s H^t(\Omega)}\|\phi\|_{H^{-t}(\Omega)}+
|\langle A_*(g_k-g_0),\phi\rangle|\to
0,\quad  k\to\infty \quad \forall \phi\in\D(\Omega),
\end{eqnarray*}
 i.e.,
$A_*g_0|_\Omega=\tilde{f}_0|_\Omega\in H^t(\Omega)$, which implies $A_*g_0$ is extendable to $\tilde{f}_0\in \s H^t(\Omega)$ and thus $g_0\in H^{s,t} (\Omega;A_*)$.
\end{proof}

We will further use the space $H^{s,t} (\Omega;A_*)$ for the case when the operator $A_*$ is the operator $A$ from \eqref{Ldist} or the operator $A^*$ formally adjoined to it (see Section~\ref{2ndGI}).

\begin{definition}\label{Dce} Let $s\in\R$, $t\ge -\ha$. The operator $\tilde A$ mapping functions $u\in H^{s,t}(\Omega;A)$ to the extension of the distribution $Au\in {H}^t(\Omega)$ to $\s{H}^t(\Omega)$ will be called {\em the canonical extension} of the operator $A$.
\end{definition}

\begin{remark}
If $s\in\R$, $t\ge -\ha$, then  $\|\tilde Au\|_{\s{H}^t(\Omega)}\le \|u\|_{H^{s,t}(\Omega;A)}$ by definition of the space $H^{s,t}(\Omega;A)$, i.e., the linear operator
 $
\tilde A: H^{s,t}(\Omega;A)\to \s{H}^t(\Omega)
 $
is continuous.
Moreover, if $\ -\ha<t<\ha$, then by Theorem~\ref{ExtOper} and uniqueness of the extension of ${H}^t(\Omega)$ to $\s{H}^t(\Omega)$, we have the representation $\tilde A:=\s E^tA$.
\end{remark}

As in \cite[Definition 3]{MikhailovIMSE06} for scalar PDE, let us define the {\em canonical} co-normal
derivative operator. This extends  \cite[Theorem 1.5.3.10]{Grisvard1985} and \cite[Lemma 3.2]{Costabel1988} where
co-normal derivative operators acting on functions from $H^{1,0}_p(\Omega;\Delta)$ and
$H^{1,0}(\Omega;A)$, respectively, were defined.

\begin{definition}\label{Dccd}
For $u\in H^{s,-\ha}(\Omega;A)$, $\ha<s<\tha$,   we define the {\em canonical
co-normal derivative} as $T^+u:= T^+(\tilde A u,u)\in H^{s-\tha}(\pO)$, i.e.,
\bes
\label{Tcandef}
 \left\langle  T^+u\,,\, w\right\rangle _{\pO}
:= \check\E(u,\gamma_{-1} w)-\langle \tilde A u,\gamma_{-1} w \rangle_\Omega
=\langle\check{A}u- \tilde A u,\gamma_{-1} w \rangle_\Omega
  \quad  
\forall\ w\in H^{\tha-s} (\pO),
\ees
where $\gamma_{-1} : H^{s-\ha}(\pO)\to H^{s}(\Omega)$ is a bounded right inverse to the trace
operator.
\end{definition}

Theorem \ref{GCDl} for the generalized co-normal derivative and
Definition \ref{Hst} imply the following statement.
\begin{theorem}\label{GCDlc}
Under hypotheses of Definition \ref{Dccd},  the canonical
co-normal derivative $T^+u$ is independent of the
operator $\gamma_{-1}$, the operator $T^+ : H^{s,-\ha}(\Omega;A)\to
H^{s-\tha}(\pO)$ is continuous, and the first Green identity holds
in the following form,
\begin{eqnarray*} \label{Tcan}
 \left\langle T^{+}u\,,\, \gamma^+v\right\rangle _{_\pO}
 =\left\langle T^+(\tilde A u,u)\,,\, \gamma^+v\right\rangle _{_\pO}
 =\check\E(u,v)-\langle \tilde A u,v \rangle_\Omega\nonumber\\
 =\langle \check{A}u-\tilde A u,v \rangle_\Omega
  \quad  
\forall\ v\in H^{2-s} (\Omega) .
\end{eqnarray*}
\end{theorem}
Thus unlike the generalized co-normal derivative, the canonical
co-normal derivative is uniquely defined by the function $u$ and
the operator $A$ only, uniquely fixing an extension of the latter on
the boundary.

Definitions \ref{GCDd} and \ref{Dccd} imply that the generalized
co-normal derivative of $u\in H^{s,-\ha}(\Omega;A)$, $\ha<s<\tha$,
for any other extension $\tilde{f}\in \s{H}^{s-2}(\Omega)$ of the
distribution $Au|_\Omega\in {H}^{-\ha}(\Omega)$ can be expressed as
\begin{equation*}
\label{Tgentil} \left\langle
 T^+(\tilde{f},u)\,,\, w
\right\rangle _{_\pO}=\left\langle T^{+}u\,,\, w \right\rangle
_{_\pO}+\langle \tilde A u - \tilde{f},\gamma_{-1} w \rangle_\Omega
  \quad  
\forall\ w\in H^{\tha-s} (\pO) .
\end{equation*}

Note that the distributions $\check{A}u-\tilde{f}$, $\check{A}u-\tilde A u$
and $\tilde A-\tilde{f}$ belong to ${H}^{2-s}_{\pO}$ since $\tilde Au$,
$\check{A}u$, $\tilde{f}$ belong to $\s{H}^{2-s}(\Omega)$, while
$\tilde Au|_\Omega=\check{A}u|_\Omega=\tilde{f}|_\Omega=Au|_\Omega\in
{H}^{s-2}(\Omega)$.

Since by Theorem~\ref{GCDlc} the canonical co-normal derivative does not depend on the  extension operator $\gamma_{-1}$, the latter can be always chosen such that $\gamma_{-1}w$ has a support only near the boundary, which means that the co-normal derivative $T^+u$ is determined by the behavior of $u$ near the boundary. We can formalize this in the following statement.
\begin{theorem}\label{GCDlc1}
Let $\Omega$ and $\Omega'\subset\Omega$ be bounded or unbounded open Lipschitz domains, $\pO\subset\pO'$, $u\in H^{s,-\ha}(\Omega;A)$, $u\in H^{s,-\ha}(\Omega';A)$, $\ha<s<\tha$, while $T^+u$ and $T^{\prime+}u$ be the canonical
co-normal derivatives on $\pO$ and $\pO'$ respectively. Then $T^+u=r_{_\pO}T^{\prime+}u$.
\end{theorem}
\begin{proof}
By  the definition of the restriction operator $r_{_\pO}$ and Definition \ref{Dccd} we have,
\bes
 \left\langle  T^{\prime+}u\,,\, w\right\rangle _{\pO'}
:= \check\E_{\Omega'}(u,\gamma'_{-1} w)-\langle \tilde A_{\Omega'} u,\gamma'_{-1} w \rangle_{\Omega'}
  \quad  
\forall\ w\in H^{\tha-s} (\pO')\ : r_{_{\pO'\backslash\pO}}w=0 ,
\ees
where $\gamma'_{-1} : H^{s-\ha}(\pO')\to H^{s}(\Omega')$ is a bounded right inverse to the trace operator. Since $\gamma\gamma'_{-1} w =0$ on $\pO'\backslash\pO$, we can extend $\gamma'_{-1} w$ by zero on $\Omega\backslash\Omega'$ to $\gamma_{-1} w$. The operator $\gamma_{-1} : H^{s-\ha}(\pO)\to H^{s}(\Omega)$ is continuous, and we arrive at
\begin{multline*}
\label{Tcandef}
 \left\langle  T^{\prime+}u\,,\, w\right\rangle _{\pO}
=\check\E_\Omega(u,\gamma_{-1} w)-\langle \tilde A_{\Omega'} u,\gamma_{-1} w \rangle_\Omega
=\check\E_\Omega(u,\gamma_{-1} w)-\langle \tilde A_{\Omega} u,\gamma_{-1} w \rangle_\Omega
= \left\langle  T^+u\,,\, w\right\rangle _{\pO}
  \\
\forall\ w\in H^{\tha-s} (\pO),
\end{multline*}
\end{proof}

Theorem~\ref{GCDlc1} can be considered as an alternative definition of the canonical co-normal derivative, where the domain $\Omega'$ can be chosen arbitrarily small, and particularly can be take bounded when $\Omega$ is unbounded (with compact boundary). Note that similar reasoning holds also for the generalized co-normal derivative.

To give conditions when the canonical co-normal derivative $T^+ u$ coincides with the strong
co-normal derivative $T^+_c u$,
if the latter does exist in the trace sense, we prove in Lemma~\ref{densL} below that $\D(\ov{\Omega})$ is dense in $H^{s,t}(\Omega;A)$. The proof is based on the following local regularity theorem well known for the case of infinitely smooth coefficients, see e.g. \cite{Schwartz1958, Agmon1965, LiMa1}.
\begin{theorem}\label{RegThSm} Let $\Omega$ be an open set in $\R^n$, $s_1\in\R$, function $u\in H^{s_1}_{loc}(\Omega)^m$, $m\ge 1$,
satisfy strongly elliptic system \eqref{2.1} in $\Omega$ with $f\in H_{loc}^{s_2}(\Omega)^m$, $s_2>s_1-2$, and infinitely smooth coefficients. Then $u\in H^{s_2+2}_{loc}(\Omega)^m$.
\end{theorem}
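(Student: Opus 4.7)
The plan is the classical interior-regularity bootstrap: localize by a cutoff, freeze the principal coefficients at a point, invert the resulting constant-coefficient principal part as a Fourier multiplier, and iterate. First observe that the strong ellipticity condition \eqref{SEC} implies ordinary pointwise ellipticity: for every $x\in\Omega$ and every $\xi\ne 0$ the matrix $A(x,\xi):=\sum_{i,j=1}^n a_{ij}(x)\xi_i\xi_j$ is invertible on $\C^m$, since $A(x,\xi)\zeta=0$ with $\zeta\ne 0$ would contradict \eqref{SEC}. Consequently, at any frozen $x_0\in\Omega$ the constant-coefficient operator $L_0:=-\sum_{i,j=1}^n a_{ij}(x_0)\partial_i\partial_j$ is an isomorphism $H^{r+2}(\R^n)^m\to H^r(\R^n)^m$ for every $r\in\R$, with inverse a Fourier multiplier of order $-2$.

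\textbf{Main steps.} Fix $x_0\in\Omega$ and choose nested cutoffs $\phi,\psi\in\D(\Omega)$ with $\phi\equiv 1$ on $\supp\psi$, both supported in a small ball $B=B_\rho(x_0)\Subset\Omega$; it is enough to prove $\psi u\in H^{s_2+2}(\R^n)^m$. Leibniz's rule and the equation $Lu=f$ give
\begin{equation*}
L_0(\phi u)=\phi f+[L,\phi]u+(L_0-L)(\phi u),
\end{equation*}
where $[L,\phi]$ is a first-order differential operator with smooth, compactly supported coefficients, so $[L,\phi]u\in\widetilde H^{r-1}(B)^m$ whenever $u\in H^r_{loc}(\Omega)^m$. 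Because the coefficients of $L-L_0$ are smooth and vanish at $x_0$, they are $O(\rho)$ in $L_\infty(B)$ together with all derivatives needed to estimate pointwise multiplication on Sobolev spaces. Applying $L_0^{-1}$ and absorbing the small perturbation $L_0-L$ via a Neumann series yields, for $\rho$ sufficiently small, the a priori estimate
\begin{equation*}
\|\psi u\|_{H^{r+2}(\R^n)^m}\le C_r\bigl(\|f\|_{H^{r}(B)^m}+\|u\|_{H^{r+1}(B)^m}\bigr),
\end{equation*}
valid for every $r\le s_2$ provided the right-hand side is finite; the lower-order terms involving $b_j,c\in C^\infty(\overline\Omega)$ are controlled into $H^{r+1}$ and thus absorbed in the $u$-term. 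I would make this rigorous for genuinely rough $u$ by mollification and a limiting argument, or equivalently by Nirenberg-type difference quotients.

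\textbf{Iteration and main obstacle.} Starting from $u\in H^{s_1}_{loc}$ with $r=s_1-1$ (which satisfies $r\le s_2$ by the hypothesis $s_2>s_1-2$), the estimate produces $\psi u\in H^{s_1+1}(\R^n)^m$; shrinking the cutoffs and increasing $r$ by one at each pass, I reach the target regularity $\psi u\in H^{s_2+2}(\R^n)^m$ in finitely many iterations, with a single fractional step at the end when $s_2-s_1$ is not an integer. Since $x_0$ and $\psi$ are arbitrary, $u\in H^{s_2+2}_{loc}(\Omega)^m$. The principal technical obstacle is making the perturbative inversion uniform in $r$ along the bootstrap: one must verify the multiplier estimate that for $a\in C^\infty(\overline\Omega)$ with $a(x_0)=0$, multiplication by $a$ has operator norm $O(\rho)$ on $H^{r+2}(\R^n)^m$ when restricted to functions supported in $B_\rho(x_0)$, so that a single $\rho$ can be chosen which beats $\|L_0^{-1}\|^{-1}$ uniformly over the finitely many values of $r$ used. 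With smooth coefficients this is standard; the same scheme, with H\"older multiplier estimates in place of smooth ones, will be needed to handle the non-smooth case in Theorem~\ref{RegTh}.
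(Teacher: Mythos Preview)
The paper does not actually prove Theorem~\ref{RegThSm}; it cites it as classical (Schwartz, Agmon, Lions--Magenes). The paper's own detailed argument is for the H\"older-coefficient analogue, Theorem~\ref{RegTh}, which specialises to the smooth case. Your freezing-plus-bootstrap strategy is the classical route and is essentially correct for $C^\infty$ coefficients, but one step is misstated and the overall route differs from what the paper does for Theorem~\ref{RegTh}.

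\textbf{The multiplier estimate as you state it is false.} You claim that for $a\in C^\infty$ with $a(x_0)=0$, multiplication by $a$ has operator norm $O(\rho)$ on $H^{r+2}$ restricted to functions supported in $B_\rho(x_0)$. Take $n=1$, $a(x)=x$, $r+2=1$: then $\|xu\|_{H^1}^2$ contains $\|u+x\,u'\|_{L_2}^2$, and the term $\|u\|_{L_2}$ carries no factor of $\rho$. What \emph{is} true, and is what the classical argument actually uses, is the splitting
\[
\|a^- w\|_{H^r}\le C\,\|a^-\|_{L_\infty(B_\rho)}\,\|w\|_{H^r}+C_r\,\|w\|_{H^{r-1}},
\]
so that $(L_0-L)(\phi u)$ contributes $O(\rho)\|\phi u\|_{H^{r+2}}$ at top order plus $O(1)\|\phi u\|_{H^{r+1}}$ at one order lower. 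The small top-order piece is absorbed to the left; the lower-order piece joins the $\|u\|_{H^{r+1}(B)}$ term on the right. With that correction your a~priori estimate and iteration go through.

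\textbf{Comparison with the paper's proof of Theorem~\ref{RegTh}.} The paper's Step~(ii) carries out exactly your perturbative freezing, but only in the window $|s_2+1|<1$, where the relevant multiplier norm is a genuine H\"older norm with exponent $<1$ and therefore does shrink with $\rho$ (see estimate~\eqref{Lyuest}). For $|s_2+1|\ge 1$ the paper does \emph{not} rely on a lower-order splitting; instead Step~(iii) conjugates by the Bessel potential $\J^t$, $t=-1-s_2$, and uses the commutator bound of Corollary~\ref{JLcomm} to reduce to the window already handled. This detour is unnecessary for $C^\infty$ coefficients---your (corrected) argument suffices there---but it is the point of the paper's approach: under merely H\"older--Lipschitz coefficients the lower-order remainder in the splitting above is \emph{not} available for all $r$, so the Bessel-potential conjugation is what makes the non-smooth Theorem~\ref{RegTh} go. Your closing remark that ``the same scheme, with H\"older multiplier estimates in place of smooth ones, will be needed'' therefore understates the difficulty: it is precisely where the direct perturbative scheme breaks down that the paper introduces Lemma~\ref{Jcomm} and Corollary~\ref{JLcomm}.
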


Now we are in the position to prove the density theorem
\begin{theorem}\label{densL}
If $\Omega$ is a bounded Lipschitz domain, $s\in\R$,   $-\ha\le t<\ha$ and the operator $A$ is strongly elliptic on $\ov{\Omega}$,  then
$\D(\ov{\Omega})$ is dense in $H^{s,t}(\Omega;A)$.
\end{theorem}
\begin{proof}
We modify appropriately the proof from \cite[Lemma
1.5.3.9]{Grisvard1985} given for another space of such kind associated with the Laplace operator.

For every continuous linear functional $l$ on $H^{s,t}(\Omega;A)$
there exist distributions $\tilde{h}\in \s{H}^{-s}(\Omega)$ and
$g\in H^{-t}(\Omega)$ such that
 $$
l(u)=\langle \tilde{h},u\rangle_\Omega +\langle g,
\tilde Au\rangle_\Omega.
 $$

To prove the lemma claim, it suffice to show that any $l$, which
vanishes on $\D(\ov{\Omega})$, will vanish on any $u\in
H^{s,t}(\Omega;A)$. Indeed, if $l(\phi)=0$ for any
$\phi\in\D(\ov{\Omega})$, then
 \begin{equation}\label{j=0}
\langle \tilde{h},\phi\rangle_\Omega +\langle g,\tilde A\phi\rangle_\Omega=0.
 \end{equation}
Let us consider the case $-\ha<
t<\ha$ first and extend $g$ outside ${\Omega}$ to
$\tilde{g}=\s E^{-t}g\in \s{H}^{-t}(\Omega)$. Equation \eqref{j=0} gives by Theorem~\ref{ExtOper},
 \begin{multline*}
\langle \tilde{h},\phi\rangle_{\Omega'} +\langle \tilde{g},A\phi\rangle_{\Omega'}=
\langle \tilde{h},\phi\rangle_{\Omega} +\langle \tilde{g},A\phi\rangle_{\Omega}=
\langle \tilde{h},\phi\rangle_{\Omega} +\langle\s E^{-t}g,A\phi\rangle_{\Omega}=\\
\langle \tilde{h},\phi\rangle_{\Omega} +\langle {g},\s E^tA\phi\rangle_{\Omega}=
\langle \tilde{h},\phi\rangle_{\Omega} +\langle {g},\tilde A\phi\rangle_{\Omega}=0
 \end{multline*}
for any $\phi\in\D(\Omega')$ on some domain $\Omega'\supset \ov{\Omega}$, where the operator $A$ is still strongly elliptic. This means
 \be\label{A*g} A^*\tilde{g}=-\tilde{h} \quad \mbox{in } \Omega'\ee
in the sense of distributions, where $A^*$ is the operator formally adjoint to $A$.
If $t\le s-2$, then evidently $\tilde{g}\in \s{H}^{2-s}(\Omega)$. If $t> s-2$, then \eqref{A*g} and Theorem~\ref{RegThSm} imply
$\tilde{g}\in H^{2-s}_{loc}(\Omega')$ and consequently
$\tilde{g}\in \s{H}^{2-s}(\Omega)$.

In the case $t=-\ha$,
one can extend $g\in H^\ha(\Omega)$ outside $\ov{\Omega}$ by zero to $\tilde{g}\in
\s{H}^{\ha-\epsilon}(\Omega)$, $0<\epsilon$, and prove as
in the previous paragraph that $\tilde{g}\in \s{H}^{2-s}(\Omega)$.

If $-\ha<t<\ha$ or [$t=-\ha$, $s\le \tha$]
then for any $u\in H^{s,t}(\Omega;A)$, we have,
 \bes
l(u)=\langle -A^*\tilde g,u\rangle_\Omega +\langle g ,\tilde Au\rangle_\Omega =-\langle\tilde g,Au\rangle_\Omega
+\langle\tilde g,Au\rangle_\Omega =0.
 \ees
Thus $l$ is identically zero.

On the other hand, if $t=-\ha$, $s> \tha$, let $\{\tilde g_k\}\in\D(\Omega)$ be a sequence converging, as $k\to\infty$, to $g$
in $H^{\ha}_0(\Omega)=H^{\ha}(\Omega)$, cf. Theorem~\ref{H0=H}, and thus to $\tilde g$ in $\s{H}^{2-s}(\Omega)$. Then for any $u\in H^{s,\ha}(\Omega;A)$, we have,
 \begin{multline*}
l(u)=\langle -A^*\tilde g,u\rangle_\Omega +\langle g ,\tilde Au\rangle_\Omega =\lim_{k\to\infty}\left\{\langle -A^*\tilde g_k,u\rangle_\Omega
+\langle \tilde g_k,\tilde Au\rangle_\Omega\right\} \\
=\lim_{k\to\infty}\left\{-\langle
\tilde g_k,Au\rangle_\Omega +\langle \tilde g_k, Au\rangle_\Omega\right\}=0,
 \end{multline*}
which completes the proof.

\end{proof}

\begin{lemma}\label{CCCDLD}
Let $u\in
H^{s,-\ha}(\Omega;A)$, $\ha<s<\tha$, and $\{u_k\}\in \D(\ov{\Omega})$
be a sequence such that
 \be\label{u-uk}
 \|u_k-u\|_{H^{s,-\ha}(\Omega;A)}\to 0\quad \text{as}\
k\to \infty.
 \ee
Then
 $\|T^+_c u_k-T^+ u\|_{H^{s-\tha}(\pO)}\to 0$ as $k\to \infty$.
\end{lemma}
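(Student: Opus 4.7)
The plan is to split the task into two steps: first, show that the classical and canonical co-normal derivatives agree on smooth functions, namely $T^+_c u_k = T^+ u_k$ for every $u_k\in\D(\ov\Omega)$; and second, use continuity of the canonical co-normal derivative operator on $H^{s,-\ha}(\Omega;L)$, provided by Lemma \ref{GCDlc}, to pass to the limit in \eqref{u-uk}. The existence of an approximating sequence $\{u_k\}\subset\D(\ov\Omega)$ is guaranteed by Lemma \ref{densL}; here it is given by hypothesis.

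For the first step, fix $u_k\in\D(\ov\Omega)$. Then $Lu_k\in C^\infty(\ov\Omega)\subset L_2(\Omega)$, and its zero-extension to $\R^n$ lies in $\s H^0(\Omega)\subset\s H^{-\ha}(\Omega)$; since $H^{-\ha}_{\pO}=\{0\}$ by Theorem \ref{H_F=0}, this zero-extension is the unique canonical extension $L^0u_k$. Applying classical integration by parts term-by-term in the definition \eqref{Edef} of $\E(u_k,v)$—first for $v\in\D(\ov\Omega)$ and then extending to $v\in H^{2-s}(\Omega)$ by density of $\D(\ov\Omega)$ in $H^{2-s}(\Omega)$—yields the classical first Green identity
\begin{equation*}
\E(u_k,v)=\langle L^0u_k,v\rangle_\Omega+\langle T^+_cu_k,\gamma^+v\rangle_\pO,\qquad\forall\,v\in H^{2-s}(\Omega).
\end{equation*}
Subtracting from this the identity of Lemma \ref{GCDlc} applied to $u_k$ gives $\langle T^+_cu_k-T^+u_k,\gamma^+v\rangle_\pO=0$ for every $v\in H^{2-s}(\Omega)$. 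Lemma \ref{LipExt} furnishes a bounded right inverse to $\gamma^+: H^{2-s}(\Omega)\to H^{\tha-s}(\pO)$, so this trace map is surjective onto $H^{\tha-s}(\pO)$, and the required identification $T^+_cu_k=T^+u_k$ in $H^{s-\tha}(\pO)$ follows.

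Lemma \ref{GCDlc} also asserts continuity of $T^+:H^{s,-\ha}(\Omega;L)\to H^{s-\tha}(\pO)$, so \eqref{u-uk} implies $T^+u_k\to T^+u$ in $H^{s-\tha}(\pO)$; combining with $T^+_cu_k=T^+u_k$ from the preceding step concludes the proof. The one delicate point is the justification of the classical Green identity with a test function only in $H^{2-s}(\Omega)$, but this is handled by the standard density of $\D(\ov\Omega)$ in $H^{2-s}(\Omega)$ (valid for Lipschitz $\Omega$ since $2-s>0$) together with continuity in $v$ of each term: $\E(u_k,\cdot)$ is continuous because the coefficients are smooth and $u_k$ is fixed smooth; $\langle L^0u_k,\cdot\rangle_\Omega$ is continuous because $L^0u_k\in\s H^0(\Omega)\subset\s H^{s-2}(\Omega)$; and $\langle T^+_cu_k,\gamma^+\cdot\rangle_\pO$ is continuous via boundedness of $\gamma^+$ on $H^{2-s}(\Omega)$.
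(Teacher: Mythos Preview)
Your proof is correct and follows essentially the same route as the paper's: both rely on the classical first Green identity for $u_k\in\D(\ov\Omega)$ (which in your factorisation yields $T^+_c u_k=T^+ u_k$) together with the convergence $u_k\to u$ in $H^{s,-\ha}(\Omega;L)$. The only difference is organisational: the paper performs one direct computation of $\langle T^+u-T^+_cu_k,w\rangle_{\pO}$ and passes to the limit termwise, whereas you first isolate the identity $T^+_c u_k=T^+ u_k$ and then invoke the continuity of $T^+$ from Lemma~\ref{GCDlc} as a black box, which is arguably cleaner.
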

\begin{proof}
Using the definition of $T^{+}u$ and the
classical first Green identity for $u_k$, we have for any $w\in
H^{\tha-s}(\pO)$,
\begin{multline*}
\left|\left\langle T^{+} u -T^+_c u_k, w\right\rangle _{_\pO}\right|
 =\left|\check\E(u-u_k,\gamma_{-1} w)- \langle \tilde A (u-u_k), \gamma_{-1} w\rangle_\Omega\right|\le
 \\
 C\|u-u_k\|_{H^{s,-\ha}(\Omega;A)}\|w\|_{H^{\tha-s}(\pO)}.
  \end{multline*}
This implies
$$\|T^+_c u_k-T^+ u\|_{H^{s-\tha}(\pO)}\le \|u-u_k\|_{H^{s,-\ha}(\Omega;A)}\to 0\quad \mbox{as}\quad\mbox k\to \infty.$$
\end{proof}
Note that a sequence satisfying \eqref{u-uk} does always exist for bounded Lipschitz domains by Theorem~\ref{densL}.

The following statement gives the equivalence of the classical co-normal derivative (in the trace
sense) and the canonical co-normal derivative, for functions from
$H^s(\Omega)$, $s>\tha$.

\begin{corollary}\label{CCCD}
If  $u\in
H^s(\Omega)$, $s>\tha$, then $T^+ u=T^+_c u\in L_2(\pO)$.
\end{corollary}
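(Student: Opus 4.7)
The plan is to reduce to the smooth case by density and then compare limits of the classical co-normal derivatives.

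First, I would verify that $u$ lies in the space $H^{s,-\ha}(\Omega;L)$. Since $s>\tha$, the PDE operator sends $u\in H^s(\Omega)$ to $Lu\in H^{s-2}(\Omega)$ with $s-2>-\ha$, so by Theorem~\ref{H_F=0}(i) there is a unique extension $L^0u\in\s{H}^{s-2}(\Omega)\subset\s{H}^{-\ha}(\Omega)$, witnessing membership in $H^{s,-\ha}(\Omega;L)$. In particular the canonical co-normal derivative $T^+u$ is defined by \eqref{Tcandef}.

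Next, invoking Lemma~\ref{densL} with $t=-\ha$, I would pick a sequence $\{u_k\}\subset\D(\ov{\Omega})$ with $\|u_k-u\|_{H^{s,-\ha}(\Omega;L)}\to 0$, so that simultaneously $u_k\to u$ in $H^s(\Omega)$ and $L^0u_k\to L^0u$ in $\s{H}^{-\ha}(\Omega)$. Then Lemma~\ref{CCCDLD} furnishes the first limit:
\[
T^+_c u_k \longrightarrow T^+u \quad\text{in } H^{s-\tha}(\pO).
\]

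The second convergence is the classical one. Because $s>\tha$, one has $s-1>\ha$, so the components $\pa_j u_k\to\pa_j u$ in $H^{s-1}(\Omega)$ and the trace operator $\gamma^+: H^{s-1}(\Omega)\to H^{s-\tha}(\pO)$ is bounded. The coefficients $a_{ij}\in C^\infty(\ov\Omega)$ act as multipliers on $H^{s-\tha}(\pO)$, so from the definition \eqref{clCD} I obtain
\[
T^+_c u_k=\sum_{i,j=1}^n a_{ij}\,\gamma^+[\pa_j u_k]\,\nu_i \longrightarrow \sum_{i,j=1}^n a_{ij}\,\gamma^+[\pa_j u]\,\nu_i = T^+_c u
\]
in $H^{s-\tha}(\pO)$. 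Uniqueness of limits in $H^{s-\tha}(\pO)$ then gives $T^+u=T^+_c u$, which is exactly the corollary.

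There is no real obstacle here: all the heavy lifting has been done in Lemma~\ref{densL} (density of $\D(\ov\Omega)$ in the graph space, whose proof relies on the solution-regularity Theorem~\ref{RegThSm}) and in Lemma~\ref{CCCDLD}. The only mild point to watch is that one indeed chooses the base space $H^{s,-\ha}(\Omega;L)$ rather than a stronger graph space, since it is in that space that Lemma~\ref{CCCDLD} is formulated; the hypothesis $s>\tha$ guarantees $s-2>-\ha$, which makes this choice consistent with the canonical extension $L^0 u$ being uniquely defined.
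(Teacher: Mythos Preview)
Your approach is exactly the paper's---density plus two limits---but two of your intermediate claims overreach and, as written, are not justified.

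First, both Definition~\ref{Dccd} and Lemma~\ref{CCCDLD} are stated only for $\ha<s<\tha$. You invoke them directly at the given $s>\tha$ and claim convergence in $H^{s-\tha}(\pO)$; the lemma does not say this. The paper's fix is to embed $H^s(\Omega)\subset H^{1,-\ha}(\Omega;L)$ and apply Lemma~\ref{CCCDLD} at $s=1$, obtaining $T^+_c u_k\to T^+u$ only in $H^{-\ha}(\pO)$---which is all that is needed. Incidentally, the paper gets the approximating sequence from standard density of $\D(\ov\Omega)$ in $H^s(\Omega)$ (continuity of $L:H^s\to H^{s-2}$ and of the canonical extension into $\s H^{-\ha}$ then give convergence in the graph norm), so Lemma~\ref{densL}---and with it strong ellipticity---is not actually required for this corollary.

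Second, your ``classical'' convergence $T^+_c u_k\to T^+_c u$ in $H^{s-\tha}(\pO)$ is problematic on a Lipschitz boundary. The trace $\gamma^+:H^{s-1}(\Omega)\to H^{s-\tha}(\pO)$ is only available for $\ha<s-1<\tha$, so your argument breaks for $s\ge\frac{5}{2}$; moreover, the normal components $\nu_i$ are merely $L_\infty(\pO)$ and do not act as multipliers on $H^{s-\tha}(\pO)$ when $s>\tha$. The paper avoids both issues by estimating $\|T^+_c(u_k-u)\|_{H^{-\ha}(\pO)}\le\max_{\pO}|a|\,\|u_k-u\|_{H^s(\Omega)}$ via the $L_2(\pO)$ norm. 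With these two adjustments (work in $H^{-\ha}(\pO)$ throughout, embed into $H^{1,-\ha}(\Omega;L)$ for Lemma~\ref{CCCDLD}), your proof becomes the paper's.
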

\begin{proof}
If $u\in H^s(\Omega)$, $\tha<s<\frac{5}{2}$, then $\gamma^+[\partial_j u]\in H^{s-\tha}(\pO)$, $T^+_c u\in L_2(\pO)$  and $u\in H^{s,
s-2}(\Omega;A)\subset H^{s,
-\ha}(\Omega;A)\subset H^{1,-\ha}(\Omega;A)$  by Remark~\ref{R3.4}.  Let $\{u_k\}\in
\D(\ov{\Omega})$ be a sequence such that $\|u_k-u\|_{H^s(\Omega)}\to 0$  and thus
 $$\|u_k-u\|_{H^{1,-\ha}(\Omega;A)}\le\|u_k-u\|_{H^{s,-\ha}(\Omega;A)}\le C\|u_k-u\|_{H^{s}(\Omega)} \to 0,  \quad
k\to\infty.$$
Then
 \bes\label{EC3.12}
\|T^+u-T^+_c u\|_{H^{-\ha}(\pO)}\le \|T^+u-T^+_c u_k\|_{H^{-\ha}(\pO)}+\|T^+_c(u_k-u)\|_{H^{-\ha}(\pO)},
 \ees
where the first norm in the right hand side vanishes as $k\to\infty$ by Lemma \ref{CCCDLD}, while for the second norm we have,
\begin{multline*}
\|T^+_c(u_k-u)\|_{H^{-\ha}(\pO)}\le
\|\sum_{i,j=1}^n a_{ij}\gamma^+[\partial_j(u_k-u)]n_j\|_{L_2(\pO)}\le\\
 C_1\|a\|_{L_\infty(\pO)}\
\|\gamma^+\nabla(u_k-u)\|_{L_2(\pO)}\le
 C_2\|a\|_{L_\infty(\pO)}\
\|u_k-u\|_{H^{s}(\Omega)}\to 0, \quad
k\to \infty.
\end{multline*}
For $s\ge\frac{5}{2}$ the corollary follows by imbedding.
\end{proof}

For a Lipschitz domain $\Omega$, the membership $u\in H^{s,t}_{loc}(\Omega;A)$ with $\ha<s<\tha$, $-\ha<t<\ha$ implies  by Theorem~\ref{RegThSm} that $u\in H^{t+2}_{loc}(\Omega)$. Thus $u\in H^{t+2}_{loc}(\ov\Omega_1)$ for any Lipschitz subdomain $\Omega_1 $ of $\Omega$ such that $\ov \Omega_1\subset\Omega$.  On $\pO_1$ then
$T^+ u=T^+_c u\in L_2(\pO_1)$  by Corollary~\ref{CCCD}.

\begin{lemma}\label{T+=limT+c}
Let $\Omega$ and $\{\Omega_k\}$ be Lipschitz domains such that $\ov{\Omega}_k\subset\Omega$ and $\Omega_k\to\Omega$ as $k\to \infty$ (cf. Definition~\ref{DOktoO}). If  $u\in H^{s,t}_{loc}(\ov\Omega;A)$ for some $s\in (\ha,\tha)$ and $t\in(-\ha,\ha)$, then
$\langle T^+ u,v^+ \rangle_\pO =\lim_{k\to\infty}\langle T^+_c u,v^+ \rangle_{\pO_k}$ for any $v\in H^{2-s}(\Omega^+)$.
\end{lemma}
\begin{proof}
By Theorem~\ref{GCDlc1} it suffice to consider only a bounded domain $\Omega$.
Let $\Omega'_k:=\Omega\setminus\ov{\Omega_k}$ be the layer between $\pO$ and $\partial\Omega_k$.  By Theorem~\ref{RegThSm}, $u\in H^{t+2}_{loc}(\Omega)$, which by Corollary~\ref{CCCD} implies $T^+ u=T^+_c u\in L_2(\pO_k)$ on $\pO_k$. Then
\begin{multline}\label{3.28a}
 \langle T^+ u,v^+ \rangle_\pO - \langle T^+_c u,v^+ \rangle_{\pO_k}=
 \langle T^+ u,v^+ \rangle_{\pO'_k}=\\
  \check\E_{\Omega'_k}(u,v)- \langle \tilde A_{\Omega'_k} u,v \rangle_{\Omega'_k}
  =\check\E_{\Omega'_k}(u,v)- \langle A u,\tilde v_{\Omega'_k} \rangle_{\Omega'_k},
 \end{multline}
where
$\tilde A_{\Omega'_k} u=\s E_{\Omega'_k}^t r_{\Omega'_k}A u\in \s H^t(\Omega'_k)$
and
$\tilde v_{\Omega'_k}=\s E_{\Omega'_k}^{-t}r_{\Omega'_k}v\in\s H^{-t}(\Omega'_k)$
are the unique extensions of $r_{\Omega'_k}A u\in H^t(\Omega'_k)$ and $r_{\Omega'_k}v\in H^{2-s}(\Omega'_k)\subset H^{-t}(\Omega'_k)$, respectively.

By \eqref{Echeckdef} and Theorem~\ref{ExtOper} we have for the first term in the right hand side of \eqref{3.28a},
 \begin{multline*}
 |\check\E_{\Omega'_k}(u,v)|\le
  C\sum\limits_{i,j=1}^{n}\|a_{ij}\|_{L_\infty(\Omega'_k)} \|\pa_j u\|_{H^{s-1}(\Omega'_k)} \|\pa_i v\|_{H^{1-s}(\Omega'_k)}
 +\\
 {C\sum\limits_{j=1}^{n} \|b_{j}\|_{L_\infty(\Omega'_k)}\|\pa_j u \|_{H^{s-1}(\Omega'_k)}  \|v\|_{H^{1-s}(\Omega'_k)}
 + C\|c\|_{L_\infty(\Omega'_k)} \| u \|_{H^{s-1}(\Omega'_k)}\|v\|_{H^{1-s}(\Omega'_k)}},
\end{multline*}
where $C$ does not depend on $k$ for sufficiently large $k$.
Then for $\ha<s\le 1$,
 \begin{multline*}
 |\check\E_{\Omega'_k}(u,v)|\le{C\sum\limits_{i,j=1}^{n}\|a_{ij}\|_{L_\infty(\Omega)} \|\pa_j u\|_{H^{s-1}(\Omega'_k)} \|\pa_i v\|_{H^{1-s}(\Omega)}
 +}\\
 {C\sum\limits_{j=1}^{n} \|b_{j}\|_{L_\infty(\Omega)}\|\pa_j u \|_{H^{s-1}(\Omega'_k)}  \|v\|_{H^{1-s}(\Omega)}
 + C\|c\|_{L_\infty(\Omega)} \| u \|_{H^{s-1}(\Omega'_k)}\|v\|_{H^{1-s}(\Omega)}\le}\\
 \{C_1\|\nabla u \|_{H^{s-1}(\Omega'_k)} + C_2\| u \|_{H^{s-1}(\Omega'_k)}\} \|v\|_{H^{2-s}(\Omega)}\to 0,\quad k\to\infty
\end{multline*}
 by Lemma~\ref{Hsto1} since the Lebesgue measure of $\Omega'_k$ tends to zero.
For $1<s< \tha$ similarly,
 \begin{multline*}
 |\check\E_{\Omega'_k}(u,v)|\le{C\sum\limits_{i,j=1}^{n}\|a_{ij}\|_{L_\infty(\Omega)} \|\pa_j u\|_{H^{s-1}(\Omega)} \|\pa_i v\|_{H^{1-s}(\Omega'_k)}
 +}\\
 {C\sum\limits_{j=1}^{n} \|b_{j}\|_{L_\infty(\Omega)}\|\pa_j u \|_{H^{s-1}(\Omega)}  \|v\|_{H^{1-s}(\Omega'_k)}
 + C\|c\|_{L_\infty(\Omega)} \| u \|_{H^{s-1}(\Omega)}\|v\|_{H^{1-s}(\Omega'_k)}\le}\\
 \{C_3\|\nabla v \|_{H^{1-s}(\Omega'_k)} + C_4\|v \|_{H^{1-s}(\Omega'_k)}\} \|u\|_{H^{s}(\Omega)}\to 0,\quad k\to\infty.
\end{multline*}

For the last term in \eqref{3.28a} we have by Lemmas \ref{Hsto3} and \ref{Hsto1},
 \begin{multline*}
|\langle A u,\tilde v_{\Omega'_k} \rangle_{\Omega'_k}|\le
\|A u\|_{H^t(\Omega'_k)}\|\tilde v_{\Omega'_k}\|_{\s H^{-t}(\Omega'_k)}\le
C\|A u\|_{H^t(\Omega'_k)}\|v\|_{ H^{-t}(\Omega)}\le\\
C\|A u\|_{H^t(\Omega'_k)}\|v\|_{ H^{2-s}(\Omega)}\to 0,\quad k\to\infty,
 \end{multline*}
if $-\ha<t\le 0$. On the other hand, if $0<t<\ha$, then again by Lemmas \ref{Hsto3} and \ref{Hsto1},
 \begin{multline*}
|\langle A u,\tilde v_{\Omega'_k} \rangle_{\Omega'_k}|=
|\langle \tilde A_{\Omega'_k} u, v\rangle_{\Omega'_k}|
\le
\|\tilde A_{\Omega'_k} u\|_{\s H^t(\Omega'_k)}\|v\|_{H^{-t}(\Omega'_k)}\le\\
C\|A u\|_{H^t(\Omega)}\|v\|_{ H^{-t}(\Omega'_k)}\to 0,\quad k\to\infty.
 \end{multline*}
\end{proof}

Lemma~\ref{T+=limT+c} allows to show that the classical and canonical co-normal derivatives coincide also in another case (apart from the one from Corollary~\ref{CCCD}). First note, that $C^1(\ov{\Omega})\subset H^{1}(\Omega)$ for bounded domain $\Omega$ and $C^1(\ov{\Omega'})\subset H^{1}(\Omega')$ for any bounded subdomain $\Omega'$ of unbounded domain $\Omega$, but  $C^1(\ov{\Omega})$ is not a subset of $H^{1,t}_{loc}(\ov\Omega;A)$. For $u\in C^1(\ov{\Omega})$, evidently, $\lim_{k\to\infty}\langle T^+_c u,v^+ \rangle_{\pO_k}=\langle T^+_c u,v^+ \rangle_\pO$ for any $v\in H^{2-s}(\Omega^+)$ if $\Omega_k\to\Omega$ as $k\to \infty$, $\ov{\Omega}_k\subset\Omega$. This immediately implies the following statement.
\begin{theorem}\label{CCCDHsg}
If $\Omega$ is a Lipschitz domain and $u\in C^1(\ov{\Omega})\bigcap H^{1,t}_{loc}(\ov\Omega;A)$ for some $t\in(-\ha,\ha)$, then $T^+u=T^+_cu\in L_\infty(\pO)$.
\end{theorem}

\section{Formally adjoined PDE system and the second Green identity}\label{2ndGI}
The PDE system formally adjoined to \eqref{2.1} is given in the strong form as
\begin{equation*}
\label{2.1*}  A^*v(x):=
 -\sum_{i,j=1}^n\pa_i [\bar  a_{ji}^\top(x)\,\pa_j v(x)]
 -\sum_{j=1}^n \,\pa_j [\,\bar b_{j}^\top(x)v(x)]+\bar c^\top(x)v(x)=
 f(x),  \;\;\;\; x \in \Omega.
\end{equation*}
Similar to the operator $A$, for any $v\in H^{2-s}(\Omega)$, $s\in\R$, the weak form of the operator $A^*$ is
\bes\label{LH1*}
    \langle A^*v,u \rangle_\Omega:=\E^*(v,u)\quad \forall u\in
    \s{H}^{s}(\Omega),
\ees
    where
\bes
    \E^*(v,u)=\ov{\E(\bar u,\bar v)}
\ees
is the bilinear form and so defined operator $A^*:
H^{2-s}(\Omega)\to H^{-s}(\Omega)=[\s{H}^{s} (\Omega)]^*$ is bounded for any $s\in\R$.

For $\ha<s<\tha$ let us consider also the {\em aggregate} operator
 $\check{A}^*: H^{2-s}(\Omega)\to\s{H}^{-s}(\Omega)=[{H}^{s} (\Omega)]^*$,
defined as,
\begin{equation}\label{Ltil*}
    \langle \check{A}^*v,u \rangle_\Omega:=\check\E^*(v,u)\quad \forall u\in
    {H}^{s}(\Omega),
\end{equation}
    where by \eqref{Echeckdef},
\begin{multline}
    \check\E^*(v,u)=\ov{\check\E(\bar u,\bar v)}=\Phi(\bar u,v)\label{E*E}
=\\
  \sum\limits_{i,j=1}^{n} \left\langle \bar a_{ij}\pa_j u , \s{E}^{1-s}\pa_i v\right\rangle_{\Omega}
 +\sum\limits_{j=1}^{n} \left\langle \bar b_{j}\pa_j u ,\s{E}^{1-s} v\right\rangle_{\Omega}
 +\left\langle \bar c u , \s{E}^{1-s}v\right\rangle_{\Omega}\quad
\end{multline}
which implies that $\check{A}^*: H^{2-s}(\Omega)\to\s{H}^{-s}(\Omega)$ is  bounded. For any $v\in H^{2-s}(\Omega)$, the
distribution $\check{A}^*v$ belongs to $\s{H}^{-s}(\Omega)$ and is  an extension of the functional ${A}^*v\in {H}^{-s}(\Omega)$ from the
domain of definition  $\s{H}^{s}(\Omega)$ to the domain of
definition ${H}^{s}(\Omega)$.

Relations \eqref{Ltil*}, \eqref{E*E} and  \eqref{Ltil} lead to the {\em aggregate second Green identity},
\begin{equation}\label{A2GI}
\langle \check{A}u,\bar v \rangle_\Omega=\langle u, \ov{\check{A}^*v}\rangle_\Omega,\quad u\in
    {H}^{s}(\Omega),\quad v\in H^{2-s}(\Omega), \quad\ha<s<\tha.
\end{equation}

For a sufficiently smooth function $v$, let
 \begin{equation*}\label{clCDm}
 T^+_{*c}v(x) := \sum\limits_{i,j=1}^n
\bar a_{ji}^\top(x)\,\gamma^+[\pa_j v(x)]\nu_i(x)+\sum_{i=1}^n \, \bar b_{i}^\top(x)\gamma^+v(x)\nu_i
 \end{equation*}
be the strong (classical) modified co-normal derivative (it corresponds to  $\s{\mathfrak{B}}_\nu v$ in \cite{McLean2000}), associated with the  operator $A^*$.

If $v\in H^{2-s}(\Omega)$, $\ha< s<\tha$,  and $A^*v=\tilde{f}_*|_{_\Omega}$ in
$\Omega$ for some $\tilde{f}_*\in \s{H}^{-s}(\Omega)$, we
define {\em the generalized modified co--normal derivative\ } $ T^+_*(\tilde{f}_*,v) \in H^{\ha-s}(\pO)$,  associated with the  operator $A^*$, similar to  Definition \ref{GCDd}, as
 $$
\left\langle
  T^+_*(\tilde{f}_*,v)\,,\, w
\right\rangle _{\pO}:=
\check\E^*(v,\gamma_{-1} w)-\langle \tilde{f}_*,\gamma_{-1} w \rangle_\Omega
  \quad  \forall\ w\in H^{s-\ha} (\pO).
 $$
As in Theorem \ref{GCDl}, this leads to the following first Green identity for the function $v$,
\begin{equation}
\label{Tgen*} \left\langle
 T^+_*(\tilde{f}_*,v)\,,\, u ^{+}
\right\rangle _{\pO}
=\check\E^*(v,u)-\langle \tilde{f}_*,u \rangle_\Omega
  \quad  
\forall\ u\in H^{s}(\Omega),
\end{equation}
which by \eqref{E*E}  implies
 \begin{equation}
\label{Tgen*bar} \left\langle
u ^{+},\ov{ T^+_*(\tilde{f}_*,v)}
\right\rangle _{\pO}
=\check\E(u,\bar v)-\langle u, \ov{\tilde{f}}_* \rangle_\Omega
  \quad  
\forall\ u\in H^{s}(\Omega).
\end{equation}
If, in addition, $Au=\tilde{f}|_{_\Omega}$ in $\Omega$ with some
$\tilde{f}\in \s{H}^{s-2}(\Omega)$, then combining \eqref{Tgen*bar} and
the first Green identity \eqref{Tgen} for $u$, we arrive at the following generalized second Green identity,
\begin{equation} \label{2.5sg*}
\langle \tilde{f},\bar v \rangle_\Omega\
 -\langle u, \ov{\tilde{f}}_*\rangle _{_\Omega}=
 \left\langle u ^{+}, \ov{ T^+_*(\tilde{f}_*, v)}\right\rangle _{_\pO}
 - \left\langle T^+(\tilde{f},u)\,,\, \ov{v^+}\right\rangle _{_\pO}
 .
\end{equation}
Taking in mind \eqref{Tgen*}, \eqref{Ltil*} and \eqref{Tgen}, \eqref{Ltil}, this, of course, leads to the aggregate second Green identity \eqref{A2GI}.

If  $\ha<s<\tha$ and $v \in H^{2-s,-\ha}(\Omega;A^*)$, then similar to Definitions \ref{Dce} and \ref{Dccd} we can introduce the {\em canonical extension} $\s A^*$ of the operator $A^{*}$, and the {\em canonical modified
co-normal derivative} $ T^+_*v:=  T^+_*(\s A^* v,v)\in H^{\ha-s}(\pO)$, i.e.,
$$
\left\langle
  T^+_*v\,,\, w
\right\rangle _{\pO}:=
\check\E^*(v,\gamma_{-1} w)-\langle \s A^*v,\gamma_{-1} w \rangle_\Omega
  \quad  \forall\ w\in H^{s-\ha} (\pO).
 $$
Then the first Green identity \eqref{Tgen*bar} becomes,
\begin{equation*}
\label{Tcon*bar} \left\langle
u ^{+},\ov{ T^+_* v}
\right\rangle _{\pO}
=\check\E(u,\bar v)-\langle u, \ov{\s A^*v} \rangle_\Omega
  \quad  
\forall\ u\in H^{s}(\Omega).
\end{equation*}
For $u\in H^s(\Omega)$, $Au=\tilde{f}|_{_\Omega}$ in $\Omega$, where
$\tilde{f}\in \s{H}^{s-2}(\Omega)$, the second Green identity \eqref{2.5sg*} takes form,
\begin{equation} \label{2.5s*}
\langle \tilde{f},\bar v \rangle_\Omega\
 -\left\langle u,\ov{\s A^*v}\right\rangle _{_\Omega}=
 \left\langle u ^{+},\ov{ T^+_*v}\right\rangle _{_\pO}
 - \left\langle T^+(\tilde{f},u),\ov{v^+}\right\rangle _{_\pO}.
\end{equation}
 This form was a starting point in formulation and analysis of the extended boundary-domain integral equations in \cite{Liverpool2005UKBIM}.

If, moreover, $u \in H^{s,-\ha}(\Omega;A)$,  we obtain from
\eqref{2.5s*} the second Green identity for the canonical extensions and
canonical co-normal derivatives,
\begin{equation} \label{GreenCan*}
\left\langle \tilde Au,\bar v\right\rangle _{_\Omega}
-\left\langle u,\ov{\s A^*v}\right\rangle _{_\Omega}=
\left\langle u ^{+},\ov{ T^+_*v}\,\right\rangle _{_\pO}
-\left\langle T^{+}u\,,\, \ov{v^+}\right\rangle _{_\pO}
 .
\end{equation}
Particularly, if $u,v \in H^{1,0}(\Omega;A)$, then \eqref{GreenCan*} takes the familiar form, cf. \cite[Lemma 3.4]{Costabel1988},
 $$
\int_{\Omega}[\,\ov{v(x)}Au(x)- u(x)\ov{A^*v(x)}\,]dx=
\left\langle u ^{+},\ov{ T^+_*v}\,\right\rangle _{_\pO}-
 \left\langle T^{+}u\,,\, \ov{v^+}\right\rangle _{_\pO}
.
 $$




\section{APPENDIX}
\begin{lemma}\label{contrHW}
There exist a distribution $w\in H^{-1}_{\pO}$ and a function $f\in L_2(\R^n)$, $f=0$ on $\Omega^-$, such that $(w,f)_{H^{-1}(\R^n)}\not=0$.
\end{lemma}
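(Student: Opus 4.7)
The idea is to use the characterisation of $H^{-1}_\pO$ from Theorem~\ref{H_F=0}(ii) to rewrite the $H^{-1}(\R^n)$ inner product as a boundary duality pairing, then explicitly exhibit an $f$ making that pairing non-zero.

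\emph{Step 1 (rewrite the inner product).} By Theorem~\ref{H_F=0}(ii), every $w\in H^{-1}_\pO$ can be written as $w=\gamma^* v$ for some unique $v\in H^{-\ha}(\pO)$. For any $f\in L_2(\R^n)$ the Bessel potential $u:=\J^{-2}f\in H^2(\R^n)$ has a well-defined trace $\gamma u\in H^{\tha}(\pO)$. Using \eqref{ipRn} and the definition of $\gamma^*$,
\begin{equation*}
(w,f)_{H^{-1}(\R^n)}=\langle\bar w,\J^{-2}f\rangle_{\R^n}=\langle\gamma^*\bar v,\J^{-2}f\rangle_{\R^n}=\langle\bar v,\gamma\J^{-2}f\rangle_\pO .
\end{equation*}
Hence the problem reduces to exhibiting $f\in L_2(\R^n)$ supported in $\ov\Omega$ such that $\gamma\J^{-2}f\not\equiv 0$ on $\pO$; once such an $f$ is found, the choice $v:=\ov{\gamma\J^{-2}f}\in H^{\tha}(\pO)\hookrightarrow H^{-\ha}(\pO)$ yields $\langle\bar v,\gamma\J^{-2}f\rangle_\pO=\|\gamma\J^{-2}f\|_{L_2(\pO)}^2>0$.

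\emph{Step 2 (construct $f$).} Fix a compact set $K\subset\Omega$ of positive Lebesgue measure and with $\mbox{dist}(K,\pO)>0$, and set $f:=\chi_K\in L_2(\R^n)$. Then $u=\J^{-2}f=G_2*f$, where $G_2$ is the Bessel kernel whose Fourier transform is $(1+|\xi|^2)^{-1}$. The kernel $G_2$ is a standard Macdonald/Yukawa-type function: it is strictly positive on $\R^n\setminus\{0\}$, locally integrable, and decays exponentially at infinity. Since $\mbox{dist}(K,\pO)>0$, for every $x$ in a neighbourhood of $\pO$ the integrand $y\mapsto G_2(x-y)\chi_K(y)$ is bounded, so $u$ is continuous there and
\begin{equation*}
u(x)=\int_K G_2(x-y)\,dy>0\quad\text{for every }x\in\pO.
\end{equation*}
In particular $\gamma u$ is a strictly positive continuous function on $\pO$, hence non-zero in $H^{\tha}(\pO)$, which completes the proof.

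\emph{Main obstacle.} The only non-trivial ingredient is the positivity of the Bessel potential kernel $G_2$; this is classical (an explicit formula involving the modified Bessel function $K_{(n-2)/2}$), so it can simply be cited. The rest is bookkeeping with the identification from Theorem~\ref{H_F=0}(ii) and the continuity of $G_2*\chi_K$ at points off the support of $\chi_K$.
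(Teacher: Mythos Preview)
Your proof is correct and follows essentially the same route as the paper: both rewrite $(w,f)_{H^{-1}(\R^n)}=\langle\bar v,\gamma\J^{-2}f\rangle_\pO$ via Theorem~\ref{H_F=0}(ii), then use the strict positivity of the Bessel/Yukawa kernel of $\J^{-2}$ to show that $\gamma\J^{-2}f$ does not vanish for $f$ a characteristic function supported in $\Omega$. The only cosmetic point is that on a merely Lipschitz boundary the claim $\gamma u\in H^{\tha}(\pO)$ is slightly too strong; but since your construction already gives $\gamma u$ continuous and strictly positive, membership in $L_2(\pO)\subset H^{-\ha}(\pO)$ is all you actually use.
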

\begin{proof}
Under the definition \eqref{ipRn} of the inner product in $H^s(\R^n)$,
 \be\label{wf}
{(w,f)}_{H^{-1}(\R^n)}
 =\langle \ov w,\J^{-2}f\rangle_{\R^n}.
 \ee
By Theorem \ref{H_F=0}, for any distribution $\ov w\in H^{-1}_{\pO}$ there exists a distribution $v\in H^{-1/2}(\pO)$ such that
 \be\label{vJ-2}
\langle\ov w,\J^{-2}f\rangle_{\R^n}=\langle v,\gamma\J^{-2}f\rangle_{\pO},
 \ee
where $\gamma$ is the trace operator.

Denoting $\Phi=\J^{-2}f\in H^2(\R^n)$, we have, $\J^2\Phi=f$ in $\R^n$, and taking in mind the explicit representation for the operator $\J^2$, the latter equation can be rewritten as
 \be\label{Phieq}
\J^2\Phi\equiv-\frac{1}{4\pi^2}\Delta\Phi+\Phi=f\quad\mbox{in } \R^n
 \ee
and its solution as
\bes
\J^{-2}f(y)=\Phi(y)={\P}f:=\int_\Omega F(x,y)f(x)dx,\quad y\in\R^n.
\ees
Here ${\P}$ is the Newton volume potential and $F(x,y)$ is the well known fundamental solution of equation \eqref{Phieq}. For example, for $n=3$,
 \be\label{AF}
 F(x,y)=C\frac{e^{-2\pi|x-y|}}{|x-y|}\ .
 \ee
Then \eqref{wf}, \eqref{vJ-2} give,
 \be\label{Ap1}
{(w,f)}_{H^{-1}(\R^n)}=\langle v,\gamma\J^{-2}f\rangle_{\pO}=
\langle v,\gamma{\P}f\rangle_{\pO}.
 \ee
If we assume $(w,f)_{H^{-1}(\R^n)}=0$ for any $w\in H^{-1}_{\pO}$, then \eqref{Ap1} implies $\gamma{\P}f=0$, which is not the case for arbitrary $f\in L_2(\Omega)$ and particularly for $f=1$ in $\Omega$ due to \eqref{AF}.
\end{proof}


\newpage


\title
\chapter{Solution regularity and co-normal derivatives for elliptic systems with non-smooth coefficients on Lipschitz domains}

\begin{center}{Sergey E. Mikhailov\footnote{
e-mail: {\sf sergey.mikhailov@brunel.ac.uk}, 
     Fax: +44\,189\,5269732}\\
     Brunel University London,
     Department of Mathematics,\\
     Uxbridge, UB8 3PH, UK}
\end{center}

 \noindent{Published in: {\em J. Math. Anal. and Appl.,} 2012, DOI: 10.1016/j.jmaa.2012.10.045
\hfill\phantom{*} 
 }\\

\makeatletter
\renewcommand{\@oddhead}{\vbox{\hbox to\textwidth{\scriptsize %
 {\em JMAA}, 2012, DOI: 10.1016/j.jmaa.2012.10.045
 \hfill S.E.Mikhailov 
 }
 \hrule
 }}
 \makeatother
 
{\bf Abstract}\\
Elliptic PDE systems of the second order  with  coefficients from $L_\infty$ or H\"older-Lipschitz spaces are considered in the paper. Continuity of the operators in corresponding Sobolev spaces is stated and the internal (local) solution regularity theorems are generalized to the non-smooth coefficient case.
For functions from the Sobolev space $H^s(\Omega)$, $\ha<s<\tha$,
definitions of non-unique generalized  and unique canonical
co-normal derivative are considered, which are related to possible
extensions of a partial differential operator and the PDE right hand
side from the domain $\Omega$ to its boundary. It is proved that the canonical co-normal derivatives coincide with the classical ones when both exist.
A generalization of  the boundary value problem settings, which makes them insensitive to the co-normal derivative inherent non-uniqueness is given. \\

\noindent{\bf Keywords}. Partial differential equation systems; Non-smooth coefficients; Sobolev spaces; Solution regularity; Classical, generalized and canonical co-normal derivatives; Weak BVP settings.

\section{Introduction}

It is well known that for a function from the Sobolev space $H^{s}(\Omega)$,
$\ha<s<\tha$, the {\em strong}  co-normal derivative defined on  the boundary  in the trace sense, does not generally exist. Instead, if the function satisfies a second order partial differential equation (or a system of such equations) with a  right-hand side from $H^{s-2}(\Omega)$, a {\em generalized}  co-normal derivative operator can be defined by the first Green's identity, cf. e.g. \cite[Lemma 4.3]{McLean2000-H} for $s=1$. However this
definition is related to an extension of the PDE operator and its right
hand side from the domain $\Omega$, where they are prescribed, to
the domain boundary, where they are not. Since the extensions are
non-unique, the generalized co-normal derivative operator appears to be
non-unique and non-linear unless a
linear relation between the PDE solution and the extension of its right hand side
is enforced. This leads to a revision of  the boundary value problem settings, to make them insensitive to the co-normal derivative inherent non-uniqueness.
For functions $u$ from a subspace of $H^{s}(\Omega)$, $\ha<s<\tha$,
which can be mapped by the (extended) PDE operator into the space
$\widetilde{H}^{t}(\Omega)$, $t\ge -\ha$,  one can define a {\em
canonical} co-normal derivative (cf. \cite[Theorem 1.5.3.10]{Grisvard1985-H} and \cite[Lemma 3.2]{Costabel1988-H} for $s=1$, $t=0$),
which is unique, linear
in $u$, and coincides with the co-normal derivative in the trace
sense if the latter does exist.
These notions were developed, to some extent, in \cite{MikMMAS2006-H} for a PDE with an infinitely smooth coefficient on a
domain with an infinitely smooth boundary. In \cite{MikJMAA2011}  the
analysis was generalized to the co-normal derivative operators
for some elliptic PDE systems with infinitely smooth coefficients and the right hand side
from ${H}^{s-2}(\Omega)$, $\ha<s<\tha$, on a Lipschitz
domain.

In this paper, we extend the previous results to solutions of elliptic second order PDE systems on interior or exterior Lipschitz domains with compact boundaries and  $L_\infty$ or H\"older-Lipschitz coefficients. To show that the canonical co-normal derivatives coincide with the classical ones, some new facts about solution regularity of PDEs with non-smooth coefficients are also proved in the paper.

The paper is arranged as follows.  Section~\ref{S2-H} provides a number of
auxiliary facts on Sobolev (Bessel potential) spaces.
In Section~\ref{S3sm1}, we describe some $L_\infty-$based  Sobolev-Slobodetski spaces that essentially coincide with the H\"older-Lipschitz spaces, to use them for PDE coefficients, and prove boundedness of PDE operators with such coefficients in appropriate Sobolev spaces.
In Section~\ref{SolReg} we generalize the well know result about the local solution regularity of elliptic PDE systems to the case of relaxed smoothness of the PDE coefficients. In addition to the  differentiation  argument employed usually in the solution regularity analysis, we use for our proof also the Bessel potential operator that appeared to be more suitable for H\"older-smooth coefficients.   The solution regularity theorems are implemented  then  in Section~\ref{S-CCD}.
In Section~\ref{S3sm3} all results of \cite{MikJMAA2011} about the generalized co-normal derivatives for PDE systems with smooth coefficients are extended to non-smooth coefficients.
Particularly, we
introduce and analyse the generalized  co-normal derivatives on interior and exterior Lipschitz
domains (with compact boundaries), associated with  elliptic systems of second order PDEs with the right hand side
from ${H}^{s-2}(\Omega)$, $\ha<s<\tha$. The weak settings of Dirichlet, Neumann and mixed problems (revised versions for the latter two) are considered and it is shown that they are well posed in spite of the inherent non-uniqueness of the generalized co-normal derivatives.
In Section~\ref{S-CCD} we introduce and analyse the canonical  co-normal derivative operator uniquely defined on some subspaces ${H}^{s,t}(\Omega;A)$ of the usual Sobolev spaces ${H}^{s}(\Omega)$, $\ha<s<\tha$, $-\ha\le t$, generalizing the corresponding results of \cite{MikJMAA2011} to the case of non-smooth coefficients of the PDE operator.
It is proved that for elliptic systems the canonical co-normal derivative coincides with the classical (strong) one for the cases when both exist. Some auxiliary estimates and necessary assertions from \cite{MikJMAA2011} are provided in two Appendices.

The present paper updates and complements the preliminary results from \cite{MikhailovIMSE06-H}.

\section{Some function spaces}\label{S2-H}

\subsection{Sobolev spaces}\label{S2.1-H}
Unless stated otherwise, we suppose that $\Omega=\Omega^+$ is an interior or exterior open domain of
$\R^n$, which boundary $\pa \Omega$ is a  compact,
Lipschitz, $(n-1)-$dimensional set. Let $\ov{\Omega}$ denote the closure of $\Omega$ and $\Omega^-=\R^n\backslash \ov{\Omega}$ its complement. In what follows
${\cal D}(\Omega)=C^\infty_{\mathrm{comp}}(\Omega)$ denotes the space of Schwartz
test functions, ${\cal D}(\overline\Omega):=\{\varphi=\phi|_\Omega,\ \phi\in{\cal D}(\R^n)\}$, while ${\cal D}^*(\Omega)$ denotes the space of
Schwartz distributions; $ H^s(\R^n)= H^s_2(\R^n)$, $
H^s(\pO)=H^s_2(\pO)$ are the Sobolev  (Bessel potential) spaces, where $s\in
\R$ is a number (see, e.g., \cite{LiMa1-H}).

 We denote by $\s{H}^s (\Omega)$ the closure of $\D(\Omega)$ in  $ H^s(\R^n)$, which can be characterized as
 $
\s{H}^s (\Omega)=\{g:\;g\in H^s  (\R^n),\; \supp \,g
\subset\ov{\Omega}\}
 $, see e.g. \cite[Theorem 3.29]{McLean2000-H}.
The space ${H}^s (\Omega)$ consists of restrictions on
$\Omega$ of distributions  from ${H}^s  (\R^n)$,
 ${H}^s (\Omega):=\{g|_{_{\Omega}}:\;g\in{H}^s (\R^n)\}$,
and $H_0^s(\Omega)$ is the closure of ${\cal D}(\Omega)$ in
$H^s(\Omega)$. We recall that $H^s(\Omega)$ coincide with the
Sobolev--Slobodetski spaces $W^s_2(\Omega)$ for any non-negative $s$.
We denote ${H}^s_{\mathrm{loc}}(\Omega):=\{g : \varphi g\in{H}^s(\Omega)\ \forall\varphi\in\D(\Omega)\}$. We will use also the notation $H^{s}_{\mathrm{loc}}(\ov\Omega):=\{g : \varphi g\in{H}^s(\Omega)\ \forall\varphi\in\D(\ov\Omega)\}$ and note that $H^{s}_{\mathrm{loc}}(\ov\Omega)=H^{s}(\Omega)$ for interior domains but not for the exterior ones.

Note that distributions from ${H}^s (\Omega)$ and $H_0^s(\Omega)$ are defined only in $\Omega$, while distributions from $\s{H}^s (\Omega)$ are defined in $\R^n$ and include the distributions supported only on the boundary $\pO$. For  $s\ge 0$, we can identify $\s{H}^s (\Omega)$ with the subset of functions from ${H}^s (\Omega)$, whose extensions by zero outside $\Omega$ belong to ${H}^s (\R^n)$, cf. \cite[Theorem 3.33]{McLean2000-H}, i.e., identify functions $u\in \s{H}^s (\Omega)$ with their restrictions, $u|_\Omega\in {H}^s (\Omega)$. However generally we will distinguish distributions $u\in\s{H}^s (\Omega)$ and $u|_\Omega\in {H}^s (\Omega)$, especially for $s\le-\ha$.

We denote by ${H}^s_{_\pO}$ the subspace of ${H}^s (\R^n)$ (and of
$\s{H}^s (\Omega)$), whose elements are supported on $\pO$, i.e.,
 $
{H}^s_{_\pO}:=\{g:\;g\in H^s  (\R^n),\; \supp \,g \subset{\pO}\}.
 $
A characterization of this space is provided in Theorem~\ref{H_F=0-H} in Appendix B.
To simplify notations for vector-valued functions, $u:\Omega\to\C^m$, we will often write $u\in H^s(\Omega)$ instead of $u\in H^s(\Omega)^m=H^s(\Omega;\C^m)$, etc.

As usual (see e.g. \cite{LiMa1, McLean2000}), for two  elements from dual complex Sobolev spaces the bilinear dual product $\langle \cdot,\cdot\rangle_\Omega$ associated with the sesquilinear inner product  $(\cdot,\cdot)_\Omega:=(\cdot,\cdot)_{L_2(\Omega)}$ in $L_2(\Omega)$ is defined as
 \bea
\langle u,v\rangle_{\R^n}&:=&\int_{\R^n} [\F^{-1} u](\xi)[\F v](\xi)d\xi=:(\F{\bar{u}},{\F v})_{\R^n}=:(\bar{u}, v)_{\R^n},
\quad
u\in H^s(\R^n),\ v\in H^{-s}(\R^n),\label{dp-H}\\
\langle u,v\rangle_{\Omega}&:=&\langle u,V\rangle_{\R^n}=:(\bar{u}, v)_{\Omega}\ \text{if}\ u\in \s H^s(\R^n),\ v\in H^{-s}(\Omega),\ v=V|_\Omega \text{ with } V\in H^{-s}(\R^n),\qquad\label{dp2}\\
\langle u,v\rangle_{\Omega}&:=&\langle U,v\rangle_{\R^n}=:(\bar{u},{v})_{\Omega}\ \text{if}\ u\in H^s(\R^n),\ v\in\s H^{-s}(\Omega),\ u=U|_\Omega \text{ with } U\in H^{s}(\R^n)\label{dp3-H}
 \eea
for $s\in\R$, where $\bar g$ is the complex conjugate of $g$, while $\F$ and $\F^{-1}$ are the distributional  Fourier transform operator and its inverse, respectively, that for integrable functions take form
 $$
 \hat g(\xi)=[\F g](\xi):=\int_{\R^n}e^{-2\pi i x\cdot\xi}g(x)dx,\quad
 g(x)=[\F^{-1} \hat g](x):=\int_{\R^n}e^{2\pi i x\cdot\xi}\hat g(\xi)d\xi .
 $$
For  vector-valued elements $u\in H^s(\R^n)^m$, $v\in H^{-s}(\R^n)^m$, $s\in\R$,  definition  \eqref{dp-H} should be understood as
 \bes
\langle u,v\rangle_{\R^n}:=\int_{\R^n} {\hat u(\xi)}\cdot\hat v(\xi)d\xi=\int_{\R^n} {\hat u(\xi)}^\top\hat v(\xi)d\xi=:(\ov{\hat u},{\hat v})_{\R^n}=:(\bar{u},{v})_{\R^n},  
 \ees
where $\hat u\cdot\hat v=\hat u^\top\hat v=\sum_{k=1}^m\hat u_k\hat v_k$ is the product of two $m-$dimensional vectors.

Let
$\mathcal{J}^s$ be the Bessel potential operator defined as
\be \label{BPD}
[\J^sg](x)=\F^{-1}_{\xi\to x}\{(1+|\xi|^2)^{s/2}\hat g(\xi)\}.
\ee
The inner product  in $H^s(\Omega)$, $s\in\R$,  is defined as follows,
\beas
&&\hspace{-2em}(u,v)_{H^s(\R^n)}:=(\J^s u, \J^s v)_{\R^n}=
\int_{\R^n}(1+\xi^2)^s\ov{\hat u(\xi)}{\hat v}(\xi) d\xi
=
\left\langle {\ov u},\J^{2s}{v}\right\rangle_{\R^n},\quad u,v\in H^s(\R^n),
\qquad\qquad
 \\
 \nonumber
&&\hspace{-2em}(u,v)_{H^s(\Omega)}:=\left((I-P)U,(I-P)V\right)_{H^s(\R^n)},\quad
u=U|_\Omega,\ v=V|_\Omega,\quad U,V\in H^s(\R^n).\qquad\qquad
\eeas
Here $P: H^s(\R^n)\to \s H^s(\R^n\backslash \bar \Omega)$ is the orthogonal projector, see e.g. \cite[p. 77]{McLean2000-H}.

\section{Elliptic PDE systems with non-smooth coefficients}\label{S3sm1}

\subsection{Some  Sobolev-Slobodetski and H\"older-Lipschitz spaces}
For an open set $\Omega$ let $W^\mu_\infty(\Omega)$, $\mu\ge 0$, be the Sobolev-Slobodetski space equipped with the norm
 $$\|g\|_{W^\mu_\infty(\Omega)}:=\sum_{0\le|\alpha|\le\mu}\|\partial^\alpha g\|_{L_\infty(\Omega)}<\infty$$
for integer $\mu$, and with the norm
 $$\|g\|_{W^\mu_\infty(\Omega)}:=\|g\|_{W^{\lfloor\mu\rfloor}_\infty(\Omega)}+ |g|_{W^{\mu}_\infty(\Omega)}<\infty,\quad   |g|_{W^{\mu}_\infty(\Omega)}:=\sum_{|\alpha|=\lfloor\mu\rfloor}\left\|\frac{\partial^\alpha g(x)-\partial^\alpha g(y)}{|x-y|^{\mu-\lfloor\mu\rfloor}}\right\|_{L_\infty(\Omega\times\Omega)}$$
for non-integer $\mu$, where $\lfloor\mu\rfloor$ is the integer part of $\mu$.
Evidently $W_\infty^0(\Omega)=L_\infty(\Omega)$, while (possibly after adjusting functions on zero measure sets, cf. \cite[Ch. V, \S 4, Proposition 6]{Stein1970}) $W_\infty^\mu(\Omega)$ is  the usual
H\"older space $C^{\mu}(\Omega)=C^{0,\mu}(\Omega)$ for $ 0<\mu< 1$, $W_\infty^{\mu}(\Omega)=C^{\lfloor\mu\rfloor,\mu-\lfloor\mu\rfloor}(\Omega)$ for non-integer
$\mu>1$, and  $W_\infty^{\mu}(\Omega)$  is the Lipschitz space $C^{\mu-1,1}(\Omega)$ for integer $\mu\ge 1$.

Let $\R_+(s)$ be the set of all non-negative numbers if $s$ is integer and of all positive numbers otherwise.

\begin{definition}\label{Chat}For an open set $\Omega$ and $\mu\ge 0$ let $\bar{C}^\mu(\overline\Omega)$ be the set of restrictions on $\Omega$ of all functions from $W^\mu_\infty(\R^n)$, equipped with the norm $\|g\|_{\bar{C}^\mu(\overline\Omega)}=\inf_{G|_\Omega=g}\|G\|_{W_\infty^\mu(\R^n)}$.

The set $\bar{C}^\mu_+(\overline\Omega)$ is defined as $\bar{C}^\mu(\overline\Omega)$ for integer non-negative $\mu$ and as $\bigcup_{\nu>\mu}\bar{C}^\nu(\overline\Omega)$ for non-integer nonnegative $\mu$. Evidently $g\in \bar{C}^\mu_+(\overline\Omega)$  if and only if $g\in \bar{C}^{\mu +\epsilon}(\overline\Omega)$ for some $\epsilon\in\R_+(\mu)$.
\end{definition}
Obviously $\|g\|_{W_\infty^0(\Omega)}= \|g\|_{\bar{C}^0(\overline\Omega)}=\|g\|_{L_\infty(\Omega)}$ i.e.  $\bar{C}^0(\overline\Omega)=W_\infty^0(\Omega)=L_\infty(\Omega)$, and  $\|g\|_{W_\infty^\mu(\Omega)}\le \|g\|_{\bar{C}^\mu(\overline\Omega)}$, $\bar{C}^\mu(\overline\Omega)\subset W_\infty^\mu(\Omega)$ for $ \mu>0$.
The space $\bar{C}^\mu(\overline\Omega)$ for $\mu>0$ is similar to the space $C_c^{\lfloor\mu\rfloor, \mu-\lfloor\mu\rfloor}(\bar\Omega)$ for non-integer $\mu$ and to the space $C_c^{\mu-1,1}(\bar\Omega)$ for integer $\mu$, used in \cite[p.21]{Grisvard1985-H}, except that functions from $\bar{C}^\mu(\overline\Omega)$ may not have a compact support in $\R^n$ assumed for functions from $C_c^{k,\alpha}(\bar\Omega)$.

\begin{theorem}\label{GrL}
Let $\Omega$ be an open set, $s\in\R$, $g\in \bar{C}^{\mu}(\overline\Omega)$,
$\mu-|s|\in\R_+(s)$.
 Then $gv\in H^s(\Omega)$
for every $v\in H^s(\Omega)$, and
 $
\|gv\|_{H^s(\Omega)}\le C \|g\|_{\bar{C}^\mu(\overline\Omega)} \|v\|_{H^s(\Omega)},
 $
where $C$ is independent of $g$, $v$ or $\Omega$.
\end{theorem}
\begin{proof}
Note that the theorem is close to the statement given in \cite[Theorem
1.4.1.1]{Grisvard1985-H} without proof.

Let first $\Omega=\R^n$. The case $s=0$ is evident.
For $s>0$  the estimate can be obtained from \cite[Theorem 2(b)]{Zolesio1977}  with parameters $s_1=\mu$, $s_2=s$, $p_1=\infty$, $q_1=p_2=q_2=p=q=2$ there (see also \cite[Theorem 1.4.4.2]{Grisvard1985-H}). A simpler proof for all $s\in\R$ is available in \cite[\S 9, Theorems 11-13]{Agranovich2006SS}.

When $\Omega\not=\R^n$, let $V\in H^s(\R^n)$ and $G\in W_\infty^s(\R^n)$ be such that $v=V|_\Omega$, $\|V\|_{H^s(\R^n)}=\|v\|_{H^s(\Omega)}$, $g=G|_\Omega$, $\|G\|_{W^{\mu}_\infty(\R^n)}<2\|g\|_{\bar{C}^{\mu}(\overline\Omega)}$.
Then $GV\in H^s(\R^n)$ by the previous paragraph and
\begin{equation*}
\|gv\|_{H^s(\Omega)}\le \|GV\|_{H^s(\R^n)}\le C \|G\|_{W^{\mu}_\infty(\R^n)} \|V\|_{H^s(\R^n)}<2C\|g\|_{\bar{C}^{\mu}(\overline\Omega)} \|v\|_{H^s(\Omega)}.
\end{equation*}
\end{proof}
Note that the condition on $g$ in Theorem~\ref{GrL} is equivalent to the membership $g\in\bar{C}^{|s|}_+(\overline\Omega)$.

\subsection{PDE systems}\label{S3sm2}
Let us consider in an open set ${\Omega}$ a system of $m$ complex linear differential equations of the second order with respect to $m$ unknown functions $\{u_{i}\}_{i=1}^m=u: \Omega\to \C^m$, which for sufficiently smooth $u$ and $f$ has the following strong form,
\begin{equation}
\label{2.1-H}  Au(x):=
 -\sum\limits_{i,j=1}^n\pa_i[  a_{ij}(x)\,\pa_j u(x)]
 +\sum\limits_{j=1}^n b_{j}(x)\,\pa_j u(x)+c(x)u(x)=
 f(x),  \;\;\;\; x \in \Omega,
\end{equation}
where $f: \Omega\to\C^m$,  $\pa_j:=\pa/\pa{x_j}$
 $(j=1,2,...,n)$, $a(x)=\{a_{ij}(x)\}_{i,j=1}^n=\{\{a_{ij}^{kl}(x)\}_{k,l=1}^m\}_{i,j=1}^n$,  $b(x)=\{\{b_{i}^{kl}(x)\}_{k,l=1}^m\}_{i=1}^n$ and $c(x)=\{c^{kl}(x)\}_{k,l=1}^m$, i.e., $a_{ij}, b_{i},c: \Omega\to\C^{m\times m}$ for fixed indices $i,j$. If $m=1$, then \eqref{2.1-H} is a scalar equation.
The PDE system formally adjoint to \eqref{2.1-H} is given in the strong form as
\begin{equation}
\label{2.1*-H}  A^*v(x):=
 -\sum_{i,j=1}^n\pa_i [\bar  a_{ji}^\top(x)\,\pa_j v(x)]
 -\sum_{j=1}^n \,\pa_j [\,\bar b_{j}^\top(x)v(x)]+\bar c^\top(x)v(x)=
 f(x),  \;\;\;\; x \in \Omega.
\end{equation}

\begin{definition}\label{DC^sigma}
For $\sigma\in\R$, we will say that the coefficients of equation \eqref{2.1-H} belong to the class ${\cal C}_+^\sigma(\overline\Omega)$, i.e. $\{a,b,c\}\in {\cal C}_+^\sigma(\overline\Omega)$,  if
$a\in \bar{C}^{|\sigma|}_+(\overline\Omega)$, $b\in \bar{C}^{\mu_b(\sigma)}_+(\overline\Omega)$,  $\mu_b(\sigma):=\max(0,|\sigma-\ha|-\ha)$, $c\in \bar{C}^{\mu_c(\sigma)}_+(\overline\Omega)$, $\mu_c(\sigma):=\max(0,|\sigma|-1).$

For an open set $\Omega$, as usual, $\{a,b,c\}\in{\cal C}^\sigma_{+\mathrm{loc}}({\Omega})$ means that $\{a,b,c\}\in{\cal C}_+^\sigma(\ov{\Omega'})$ for any $\ov{\Omega'}\subset\Omega$.
\end{definition}

Note that if $\sigma_1\le\sigma\le\sigma_2$, then
${\cal C}_+^{\sigma_1}(\overline\Omega)\bigcap{\cal C}_+^{\sigma_2}(\overline\Omega)\subset
{\cal C}_+^\sigma(\overline\Omega)\subset
{\cal C}_+^{\sigma_1}(\overline\Omega)\bigcup{\cal C}_+^{\sigma_2}(\overline\Omega)$.

Let  $u\in H^s(\Omega)$, $\{a,b,c\}\in {\cal C}_+^{s-1}(\overline\Omega)$, $f\in H^{s-2}(\Omega)$, $s\in\R$. Equation system \eqref{2.1-H} is understood in the distributional sense as
\bes
    \langle Au,v \rangle_\Omega=\langle f,v\rangle_\Omega\quad \forall v\in
    {\cal D}(\Omega),
\ees
where $v: \Omega\to \C^m$ and
\begin{equation}\label{Ldist-H}
    \langle Au,v \rangle_\Omega:=\E(u,v)\quad \forall v\in
    {\cal D}(\Omega),
\end{equation}
 \be\label{Edef-H}
\E(u,v)=\E_\Omega(u,v):=
  \sum\limits_{i,j=1}^{n} \left\langle a_{ij}\pa_j u , \pa_i v\right\rangle_{\Omega}
 +\sum\limits_{j=1}^{n} \left\langle b_{j}\pa_j u  ,  v\right\rangle_{\Omega}
 +\left\langle c u  ,  v\right\rangle_{\Omega}.
 \ee
 Let us denote
\be\label{sbsc}
s_b(s)=\begin{cases}
s-1 &\mbox{if } s<1\\
0&\mbox{if } 1\le s\le 2\\
s-2&\mbox{if } 2<s
\end{cases}
,\qquad \qquad
s_c(s)=\begin{cases}
s &\mbox{if } s<0\\
0&\mbox{if } 0\le s\le 2\\
s-2&\mbox{if } 2<s
\end{cases}
\ee
Taking into account that $\mu_b(s-1)=|s_b(s)|$ and $\mu_c(s-1)=|s_c(s)|$, Theorem~\ref{GrL} implies that $a_{ij}\pa_j u\in H^{s-1}(\Omega)$, $b_{j}\pa_j u\in H^{s_b(s)}(\Omega)$, $c u\in H^{s_c(s)}(\Omega)$. Thus bilinear form \eqref{Edef-H} is well defined for any $v\in {\cal D}(\Omega)$ and moreover, the bilinear functional $\E:\{H^s(\Omega),\widetilde H^{2-s}(\Omega)\}\to\C$ is bounded for any $s\in\R$.
Since the set ${\cal D}(\Omega)$ is dense in $\s{H}^{2-s}
(\Omega)$, expression \eqref{Ldist-H} defines then a bounded linear operator $A:
H^s(\Omega)\to H^{s-2}(\Omega)=[\s{H}^{2-s} (\Omega)]^*$,
$s\in\R$,
\begin{equation}\label{LH1-H}
    \langle Au,v \rangle_\Omega:=\E(u,v)\quad \forall v\in
    \s{H}^{2-s}(\Omega).
\end{equation}
Similar to the operator $A$,  the weak form of the operator $A^*$ for any $v\in H^{2-s}(\Omega)$, $s\in\R$, is
\be\label{LH1*-H}
    \langle A^*v,u \rangle_\Omega:=\E^*(v,u)\quad \forall u\in
    \s{H}^{s}(\Omega),
\ee
    where
$
    \E^*(v,u)=\ov{\E(\bar u,\bar v)}
$
is the bilinear form and so defined operator $A^*:
H^{2-s}(\Omega)\to H^{-s}(\Omega)=[\s{H}^{s} (\Omega)]^*$ is bounded for any $s\in\R$.

The above paragraph can be summarized as the following assertion.
\begin{theorem}\label{T4.4} If $s\in\R$ and $\{a,b,c\}\in {\cal C}_+^{s-1}(\overline\Omega)$, then bilinear form \eqref{Edef-H}, $\E: \{H^s(\Omega),\widetilde H^{2-s}(\Omega)\}\to\C$ is bounded, while expressions \eqref{LH1-H} and  \eqref{LH1*-H} define  bounded linear operators
 $
 A:H^s(\Omega)\to H^{s-2}(\Omega)
 $
and
 $
 A^*:H^{2-s}(\Omega)\to H^{-s}(\Omega)
 $, respectively
\end{theorem}
Note that for the particular important case $s=1$, the conditions on the coefficients in Theorem~\ref{T4.4} mean $a,b,c\in L_\infty(\Omega)$.

\section{Local  regularity of solutions to elliptic systems with H\"older-Lipschitz coefficients}\label{SolReg}

In this section we extend the well known result about the local regularity of elliptic PDE solutions, to the case of relaxed smoothness of the PDE coefficients. This will be used  then to prove counterparts of \cite[Theorems 3.12 and 3.16]{MikJMAA2011} in Section~\ref{S3}.

The local  regularity of solutions to elliptic PDEs \eqref{2.1-H} and \eqref{2.1*-H} for the case of infinitely smooth coefficients is well known (see e.g. \cite{Schwartz1958, Agmon1965, LiMa1}). For non-infinitely smooth coefficients, the case $a,b,c\in C^{k,1}(\bar\Omega)$, $s_1=1$, $s_2= k$ with integer $k\ge 0$ can be found in \cite[Theorem 4.16]{McLean2000-H}, and the case $a\in C^{0,1}(\bar\Omega)$, $b=0$, $c=const$, $s_2\in (-3/2,-1/2)$ in \cite[Theorem 4]{Savare1998}, extended  in \cite{Agranovich2007faa} to general elliptic systems with all coefficients from $C^{0,1}(\bar\Omega)$.  In Theorems \ref{RegTh} and \ref{RegThInf} below we prove the local regularity results for arbitrary H\"older coefficients and wider ranges of the Sobolev space indices $s_1$ and $s_2$.

Let us define the matrix function $\mathcal A(x,\xi):=\sum_{i,j=1}a_{ij}(x)\xi_i\xi_j$ for $\xi\in\R^n$.
The partial differential operator $A$ is elliptic in the sense of Petrovsky at a point $x$, where the coefficients $a_{ij}^{kl}(x)$ are defined, if
$
\det\mathcal A(x,\xi)\not=0
$
for any non-zero $\xi\in\R^n$ (see e.g. \cite[Section 55]{Miranda1970}),  evidently implying
$
|\det\mathcal A(x,\xi)|\ge C(x)|\xi|^{2m}$ for all $\xi\in\R^n
$
 with some positive $C(x)$, which in turn gives the following estimate for the matrix norm $|\cdot|$ of the inverse matrix $\mathcal A^{-1}(x,\xi)$,
\be\label{PE3}
|\mathcal A^{-1}(x,\xi)|\le C_0(x)|\xi|^{-2}\quad\forall\ \xi\in\R^n
\ee
with some $C_0(x)>0$.
We say that the operator $A$ is elliptic in a domain if it is  elliptic at each point of the domain.

Note that we will need the ellipticity in this paper only in  proving solution regularity in Theorems \ref{RegTh} and \ref{RegThInf}, which will be then used only to prove equivalence of the strong and canonical co-normal derivatives in Section~\ref{S3}.

Differentiation or Nirenberg difference quotient arguments are employed usually in the solution regularity analysis in \cite{Nirenberg1955, Schwartz1958, Agmon1965, LiMa1}, but  we will also need for our proof some powers of the Bessel potential operator $\J$ to deal with the H\"older-smooth coefficients along with the solution and the right hand side in some range of Sobolev spaces and  have to prove  first Lemma~\ref{Jcomm} and Corollary~\ref{JLcomm} about commutators.

\begin{lemma}\label{Jcomm}
Let $s$ be real, $k$ be integer,   $w\in H^s(\R^n)^m$, $g\in W^{\sigma+\ve}_\infty(\R^n)^m$, $\sigma=\left|s-k+\frac{1}{2}\right|+|k|+\frac{1}{2}$ and $\ve\in\R_+(\sigma)$. Then $\J^{2k}(gw)-g\J^{2k} w\in { H^{s
-2k+1}(\R^n)}^m$ and
\begin{eqnarray}
 \|\J^{2k}(gw)-g\J^{2k} w\|_{ H^{s -2k+1}(\R^n)^m}&\le&
 C|k|\ \|g\|_{W^{\sigma+\ve}_\infty(\R^n)^m}\|w\|_{H^s(\R^n)^m}.\label{DJt}
\end{eqnarray}
\end{lemma}
\begin{proof}
The proof below is given for $m=1$,  generalization to the vector case, $m>1$, is evident. For $k=0$ the lemma is trivial. Let now $k>0$.
Denoting the Fourier convolution by $*$ we have due to\eqref{BPD},
\begin{multline*}
K(\xi):=\F[\J^{2k}(gw)-g\J^{2k} w](\xi)=(1+|\xi|^2)^{k}(\widehat g  * \widehat w)(\xi)-(\widehat g *\F[\J^{2k} w])(\xi)=\\
\int_{\R^n}[(1+|\xi|^2)^{k}-(1+|\xi-\eta|^2)^{k}]\widehat g (\eta)
\widehat w(\xi-\eta)d\eta=
\int_{\R^n}[(\eta\cdot\xi+\eta\cdot(\xi-\eta)]f_k(\xi,\xi-\eta)\widehat g (\eta)  \widehat w(\xi-\eta)d\eta\\
=\frac{1}{2\pi i}\int_{\R^n}\widehat{\nabla g}(\eta)\cdot(\xi + \xi-\eta)
f_k(\xi,\xi-\eta)\widehat w(\xi-\eta)d\eta,
\end{multline*}
where
 $$
 f_k(\xi,\xi-\eta):=\frac{(1+|\xi|^2)^{k}-(1+|\xi-\eta|^2)^{k}}{|\xi|^2-|\xi-\eta|^2}
 =\frac{p^{2k}(\xi)-p^{2k}(\xi-\eta)}{p^2(\xi)-p^2(\xi-\eta)}
 =\sum_{j=1}^kp^{2(k-j)}(\xi)p^{2(j-1)}(\xi-\eta)
 $$
and $p(\zeta):=(1+|\zeta|^2)^{1/2}$.  This implies
\begin{align*}
K(\xi)&= \frac{1}{2\pi i}\sum_{j=1}^kp^{2(k-j)}(\xi)\left[ \xi\cdot\int_{\R^n}\widehat{\nabla g}(\eta)p^{2(j-1)}(\xi-\eta)\widehat w(\xi-\eta)d\eta
+\int_{\R^n}\widehat{\nabla g}(\eta)\cdot(\xi-\eta)p^{2(j-1)}(\xi-\eta)\widehat w(\xi-\eta)d\eta\right]
\\
&= \frac{-1}{4\pi^2}\sum_{j=1}^kp^{2(k-j)}(\xi)\mathcal F\left[
\nabla\cdot\left\{({\nabla g})\J^{2(j-1)}w\right\}
+{\nabla g}\cdot\J^{2(j-1)}\nabla w\right](\xi).
\end{align*}
Taking into account Theorem~\ref{GrL}, we obtain,
\begin{align*}
\|\J^{2k}(gw)-g\J^{2k}w\|_{ H^{s -2k+1}(\R^n)}&=\|p^{s -2k+1} K\|_{ L_2(\R^n)}
\\
&
= \frac{1}{4\pi^2}\left\|\sum_{j=1}^kp^{s+1-2j}\mathcal F\left[
\nabla\cdot\left\{({\nabla g})\J^{2(j-1)}w\right\}
+{\nabla g}\cdot\J^{2(j-1)}\nabla w\right]\right\|_{L_2(\R^n)}\\
 &\le \frac{1}{4\pi^2}\sum_{j=1}^k\left\|
\nabla\cdot\left\{({\nabla g})\J^{2(j-1)}w\right\}
+{\nabla g}\cdot\J^{2(j-1)}\nabla w\right\|_{H^{s+1-2j}(\R^n)}\\
&\le C_1\sum_{j=1}^k\left[
\|g\|_{W^{|s+2-2j|+1+\ve_1}_\infty(\R^n)}
+\|g\|_{W^{|s+1-2j|+1+\ve_1}_\infty(\R^n)}\right] \|w\|_{H^{s}(\R^n)}.
\end{align*}
for any $\ve_1\in\R_+(s)$. That is,
 \be\label{DJt1}
 \|\J^{2k}(gw)-g\J^{2k}w\|_{ H^{s -2k+1}(\R^n)} \le C_1k(\|g\|_{W^{|s |+1+\ve_1}_\infty(\R^n)} + \|g\|_{W^{|s -2k+1|+1+\ve_1}_\infty(\R^n)})\|w\|_{H^s(\R^n)}.
 \ee

Let now $k<0$.
If we denote $v=\J^{2k}w\in
H^{s-2k}(\R^n)$, then by inequality \eqref{DJt1}, where $2k$ is
replaced with $-2k$ and  $s-2k$ with $s$, we obtain,
\begin{multline*}
\|\J^{2k}(gw)-g\J^{2k}w\|_{ H^{s-2k+1}(\R^n)}=\|\J^{2k}[g\J^{-2k}v-\J^{-2k}(gv)]\|_{ H^{s-2k+1}(\R^n)}=\|g\J^{-2k}v-\J^{-2k}(gv)\|_{ H^{s+1}(\R^n)}\\
\shoveleft
{\le  C_1|k|(\|g\|_{W^{|s-2k|+1+\ve_1}_\infty(\R^n)} + \|g\|_{W^{|s+1|+1+\ve_1}_\infty(\R^n)})\|v\|_{H^{s-2k}(\R^n)}}\\
= C_1|k|(\|g\|_{W^{|s-2k|+1+\ve_1}_\infty(\R^n)} +
\|g\|_{W^{|s+1|+1+\ve_1}_\infty(\R^n)})\|w\|_{H^s(\R^n)}.
\end{multline*}

Inequality \eqref{DJt} follows for both positive and negative $k$ if we remark that
$$ \sigma:=\left|s-k+\frac{1}{2}\right|+|k|+\frac{1}{2}
=\begin{cases}\max(|s|+1,\ |s-2k+1|+1),&k> 0\\
        \max( |s-2k|+1,\ |s+1|+1)&k< 0
\end{cases}.$$
\end{proof}

Let us denote by $A_0$ the principal divergence part of the operator $A$ from \eqref{2.1-H},  i.e.,
\be
\label{2.1_0}
A_0u(x):=
 -\sum\limits_{i,j=1}^n\pa_i[  a_{ij}(x)\,\pa_j u(x)].
\ee

Bearing in mind that the Bessel potential operators $\J^{2k}$ commutate with
differentiation, Lemma~\ref{Jcomm} implies the following assertion.
\begin{corollary}\label{JLcomm}
Let $s$ be real, $k$ be integer, $u\in H^s(\R^n)^m$, $a_{ij}\in
W^{\sigma+\ve}_\infty(\R^n)^{m\times m}$,
$\sigma=\left|s-k-\frac{1}{2}\right|+|k|+\frac{1}{2}$, $\ve\in\R_+(\sigma)$.  Then $\J^{2k}(A_0u)-A_0\J^{2k}
u\in { H^{s-2k-1}(\R^n)^m}$ and
\begin{eqnarray*}
 \|\J^{2k}(A_0u)-A_0\J^{2k} u\|_{ H^{s-2k-1}(\R^n)^m}&\le&
 C|2k|\|a\|_{W^{\sigma+\ve}_\infty(\R^n)^m}\|u\|_{H^s(\R^n)^m}.
\end{eqnarray*}
\end{corollary}

If $\Omega$ is an open set while a set $\Omega'$ is such that $\ov{\Omega'}\subset\Omega$, we will denote this as  $\Omega'\Subset\Omega$.

Now we are in a position to prove the following local regularity theorem.

\begin{theorem} \label{RegTh}
Let $\Omega$ be an open set in $\R^n$, $s_1\in\R$, $m\ge 1$, $u\in H^{s_1}_{\mathrm{loc}}(\Omega)^m$, $f\in H^{s_2}_{\mathrm{loc}}(\Omega)^m$, $s_2>s_1-2$. If $u$ satisfies either
\\
(a) elliptic (in the sense of Petrovsky) system \eqref{2.1-H}, $Au=f$, in
$\Omega$ with $\{a,b,c\}\in {\cal C}_{+\mathrm{loc}}^{s_1-1}(\Omega)\bigcap {\cal C}_{+\mathrm{loc}}^{s_2+1}(\Omega)$ or
\\
(b)  elliptic (in the sense of Petrovsky) system \eqref{2.1*-H}, $A^*u=f$, in
$\Omega$ with $\{a,b,c\}\in {\cal C}_{+\mathrm{loc}}^{1-s_1}(\Omega)\bigcap {\cal C}_{+\mathrm{loc}}^{-s_2-1}(\Omega)$,

\noindent then $u\in H^{s_2+2}_{\mathrm{loc}}(\Omega)^m$.
\end{theorem}
\begin{proof}
Note that the theorem hypothesis $s_2>s_1-2$ implies that either $s_1\not=1$ or $s_2\not=-1$ and thus $a\in \bar C^\mu_{\mathrm{loc}}(\Omega)$ for some $\mu>0$ and particularly, $a\in C(\Omega)$ (maybe after adjusting $a$ on a zero measure set, that we will assume to be done).
We give a proof only for part (a) of the theorem, organized in several steps, for part (b) it is similar.
\paragraph{Step (0)}
As usual, cf. e.g. \cite[Chapter 2, Theorem 3.1 ]{LiMa1-H}, let us first consider the case $a=const$, $b=0$, $c=0$ and $\Omega=\R^n$. Suppose a function $U$ satisfies equation \eqref{2.1-H}. Application of the Fourier transform reduces this equation to
$(2\pi)^2\mathcal A(\xi)\widehat U(\xi)=\widehat f(\xi)$. Resolving it for $\widehat U$ and applying ellipticity estimate \eqref{PE3}, we obtain
 $
 (1+|\xi|^2) |\widehat U(\xi)|\le C_1|\widehat f(\xi)|+|\widehat U(\xi)|
 $
with $C_1=(2\pi)^{-2}C_0$,
implying
 \begin{equation}\label{estu}
 \|U\|_{H^{s+2}(\R^n)}\le
 C_1\|f\|_{H^{s}(\R^n)}+\|U\|_{H^{s}(\R^n)}\quad \forall s\in\R.
 \end{equation}
\paragraph{Step (i)} Let now the coefficients $\{a,b,c\}\in {\cal C}_{+\mathrm{loc}}^{s_1-1}(\Omega)\bigcap {\cal C}_{+\mathrm{loc}}^{s_2+1}(\Omega)$ be not generally constant, $\Omega$ be not generally $\R^n$, and $u\in H^{s_1}_{\mathrm{loc}}(\Omega)$.
Let $B_{\rho}=B_{y,\rho}\subset\Omega'\Subset\Omega$ be an open ball of radius $\rho$ centered at a point $y\in\Omega$. Let $a$, $b$, $c$ and
$u$ 
be extended outside $\Omega'$ to $\{a^e,b^e,c^e\}\in {\cal C}_{+}^{s_1-1}(\R^n)\bigcap {\cal C}_{+}^{s_2+1}(\R^n)$ and $u^e\in H^{s_1}(\R^n)$, and we will further drop the superscript $e$ for brevity.

Let $\eta \in \D(B_{\rho})$ be a cut-off function such that  $\eta (x)=1$ in $B_{\rho/2}$. Then $U_\eta(x):=\eta (x)u(x)$ belongs to $H^{s_1}(\R^n)$, is compactly supported in $B_{\rho}$ and satisfies equation
 \begin{equation}\label{Lx0}
A_{0y}U_\eta=\eta f+A_{\eta}u-A_0^-U_\eta\quad\text{in}\ \R^n.
\end{equation}
Here $A_{0y}$ is the principal part of the operator with the coefficient matrix $a(y)$, thus constant in $x$, i.e.,
\bea
A_{0y}U_\eta&:=&-\sum_{i,j=1}^n a_{ij}(y)\pa_i\pa_j U_\eta,\\
A_{\eta}u&:=& -\sum_{i,j=1}^n (\pa_i\eta )a_{ij}\pa_j u
 -\sum_{i,j=1}^n\pa_i[(\pa_j\eta )a_{ij} u]
 -\sum_{j=1}^n\eta b_j \pa_ju
 - \eta cu,\label{Leta}\\
 A_0^-U_\eta&:=&-\sum_{i,j=1}^n \pa_i(a_{ij}^-\pa_j U_\eta),
\label{L0}\eea
and $a^-(x):=a(x)-a(y)$.
Let
\be\label{range2+3}
s_2+1\le s_1<s_2+2,
\ee
see Fig.~\ref{zones}.
\begin{figure}[htb]
\centerline{%
\psfig{file=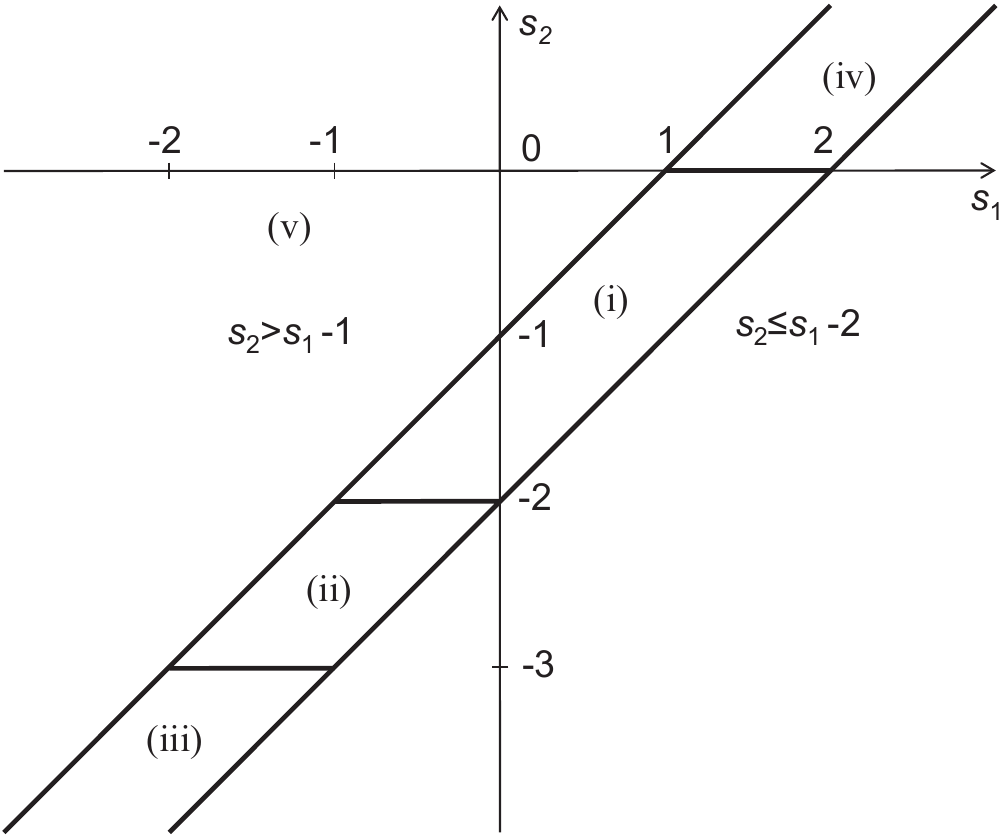,height=7cm}}%
\caption{\rm Regions of parameters $s_1$, $s_2$.}
 \label{zones}
 \end{figure}
 Then by Theorem~\ref{GrL},
\begin{multline}
\label{Lrho}
\|A_{\eta}u\|_{H^{s_2}(\R^n)}
\le C_2\left[\|(\nabla\eta )a\nabla u \|_{H^{s_2}(\R^n)}
+\| (\nabla\eta )a u\|_{H^{s_2+1}(\R^n)}
+\|\eta b\nabla u \|_{H^{s_2}(\R^n)}
+\|\eta c u \|_{H^{s_2}(\R^n)}\right]
\\
\le
CC_2\left[\|\nabla\eta \|_{W_\infty^{|s_1-1|+\ve_1}(\R^n)}\|a\nabla u\|_{H^{s_1-1}(\R^n)}
+\|\nabla\eta \|_{W_\infty^{|s_2+1|+\ve_2}(\R^n)}\|a u\|_{H^{s_2+1}(\R^n)}\right.
\\
+\left.\|\eta \|_{W_\infty^{|s_2|+\ve_2}(\R^n)}\|b\nabla u\|_{H^{s_2}(\R^n)}
+\|\eta \|_{W_\infty^{|s_2|+\ve_2}(\R^n)}\|c u\|_{H^{s_2}(\R^n)}
\right]
\le C_3(\eta)\|u \|_{H^{s_1}(\R^n)},
\end{multline}
\begin{eqnarray}
C_3(\eta)&:=&
 CC_2C'\left[\|\nabla\eta \|_{W_\infty^{|s_1-1|+\ve_1}(\R^n)}
  \|a\|_{W_\infty^{|s_1-1|+\ve_1}(\R^n)}
 +\|\nabla\eta \|_{W_\infty^{|s_2+1|+\ve_2}(\R^n)}
  \|a\|_{W_\infty^{|s_2+1|+\ve_2}(\R^n)}\right.\nonumber\\
&&\left.+\|\eta \|_{W_\infty^{|s_2|+\ve_2}(\R^n)}\|b\|_{W_\infty^{\mu^0_b+\ve^0_b}(\R^n)}
+\|\eta \|_{W_\infty^{|s_2|+\ve_2}(\R^n)}\|c\|_{W_\infty^{\mu^0_c+\ve^0_c}(\R^n)}
\right],\label{C3}
\end{eqnarray}
\begin{eqnarray*}
\mu^0_b&:=&\min\{|s| : s_2\le s\le s_1-1\}=\max\{s_2, 1-s_1, 0\}
=\max\{\mu_b(s_1-1),\,\mu_b(s_2+1)\},
\\
\mu^0_c&:=&\min\{|s| : s_2\le s\le s_1\}=\max\{s_2, -s_1, 0\}
=\max\{\mu_c(s_1-1),\,\mu_c(s_2+1)\},
\end{eqnarray*}
by Definition~\ref{DC^sigma} and condition \eqref{range2+3},
while by the theorem hypothesis there exist $\ve_1\in\R_+(s_1)$, $\ve_2\in\R_+(s_2)$, $\ve^0_b\in \R_+(\mu^0_b)$, $\ve^0_c\in \R_+(\mu^0_c)$ such that the norms of the coefficients $a,b,c$ are bounded in \eqref{C3}.

Let us assume the condition
 \be \label{case-a}
 |s_2+1|<1
 \ee
in addition to condition \eqref{range2+3}, which correspond to region (i) in Fig. \ref{zones}.

Let us define $a_0^-(x):=\begin{cases}
a^-(x),& x\in \bar B_{\rho}\\
a^-(x\rho/|x|),& x\not\in \bar B_{\rho}
\end{cases}$\ .\\
Then it is easy to see that\\
$\|a_0^-\|_{ W_\infty^{|s_2+1|+\ve_2}(\R^n)}=
\|a_0^-\|_{ C^{|s_2+1|+\ve_2}(\R^n)}=
\|a^-\|_{ C^{|s_2+1|+\ve_2}(\bar B_{\rho})}$
for some $\ve_2$ such that
\be\label{ve2}
\ve_2\in\R_+(s_2),\quad |s_2+1|+\ve_2< 1.
\ee
Thus, since $\supp U_\eta\subset B$, we have from \eqref{L0}  by Theorem \ref{GrL},
\begin{multline}\label{Lyuest}
\|A_0^-U_\eta\|_{H^{s_2}(\R^n)}
 \le C_4 \| a^-\nabla U_\eta\|_{H^{s_2+1}(\R^n)}=C_4 \| a_0^-\nabla U_\eta\|_{H^{s_2+1}(\R^n)}\\
\le
 CC_4 \| a_0^-\|_{W_\infty^{|s_2+1|+\ve_2/2}(\R^n)}\| \nabla U_\eta\|_{H^{s_2+1}(\R^n)}
 \le CC_4 \| a^-\|_{C^{|s_2+1|+\ve_2/2}(\ov B_{\rho})}C_5\| U_\eta\|_{H^{s_2+2}(\R^n)}
 \end{multline}

Applying estimate \eqref{estu} to equation \eqref{Lx0} and taking into account estimates \eqref{Lrho} and  \eqref{Lyuest}, we then have under conditions \eqref{range2+3} and  \eqref{ve2},
\begin{eqnarray}\label{estu21}
 C_6(\rho)\|U_\eta\|_{H^{s_2+2}(\R^n)}
 &\le& C_7(\eta)\|f\|_{H^{s_2}(B_{\rho})}
 +C_8(\eta)\|u \|_{H^{s_1}(B_{\rho})},
\end{eqnarray}
 $$
 C_6(\rho):=1-C_1CC_4C_5\| a^-\|_{C^{|s_2+1|+\ve_2/2}(\bar B_{\rho})},\
 C_7(\eta):= C_1C\|\eta \|_{\bar C^{|s_2|+\ve_2}(\bar B_{\rho})},\
 C_8(\eta):=C_1C_3(\eta)+  C_7(\eta).
 $$
The parameter $C_7(\eta)$ and, due to the theorem hypotheses, also  $C_3(\eta)$ and thus $C_8(\eta)$ are  finite for any $\rho\in (0,\infty)$.
We will prove that $C_6(\rho)$ is positive for sufficiently small $\rho$ under conditions \eqref{range2+3}, \eqref{case-a}.

Let first  $s_2=-1$, and consider estimate \eqref{estu21} with
$\ve_2=0$.  Since  $a^-(y)=0$ and $a^-$ is continuous in $\bar B_{\rho}$, for any sufficiently small  $\rho>0$, the norm $\| a^-\|_{C^{|s_2+1|+\ve_2/2}(\bar B_{\rho})}=\|a^-\|_{C(\bar B_{\rho})}$ becomes small enough for $C_6(\rho)$ in \eqref{estu21} to be positive.

Let now  $0<|s_2+1|<1$.  Due to the theorem hypothesis, there exists $\ve_2\in (0,1-|s_2+1|)$ such that $a^-\in C^{|s_2+1|+\ve_2}(\bar B_{\rho})$, which implies the following estimate,
\beas
\|a^-\|_{C^{|s_2+1|+\ve_2/2}(\bar B_{\rho})}&=&\|a^-\|_{C(\bar B_{\rho})}+
 |a|_{C^{|s_2+1|+\ve_2/2}(\bar B_{\rho})}\le
 \|a^-\|_{C(\bar B_{\rho})}+
 (2\rho)^{\ve_2/2} |a|_{C^{|s_2+1|+\ve_2}(\bar B_{\rho})}, \\
 |a|_{C^{|s_2+1|+\ve_2}(\bar B_{\rho})}&:=&\sup_{x,x'\in \bar B_{\rho}}\frac{|a(x)-a(x')|}{|x-x'|^{|s_2+1|+\ve_2}}\le |a|_{C^{|s_2+1|+\ve_2}(\bar \Omega')}<\infty.
 \eeas
Thus again  for any sufficiently small  $\rho>0$, the norm $\| a^-\|_{C^{|s_2+1|+\ve_2/2}(\bar B_{\rho})}$ becomes small enough for $C_6(\rho)$ in \eqref{estu21} to be positive.

This means $U_\eta\in H^{s_2+2}(\R^n)$ implying $u\in H^{s_2+2}(B_{y,\rho(y)/2})$ for arbitrary point $y\in\Omega$
under conditions \eqref{range2+3}, \eqref{case-a}.
Thus any compact subdomain  $\bar\Omega'$ of the open domain $\Omega$ has an open cover by the balls $B_{y,\rho(y)/2}$ such that $u\in H^{s_2+2}(B_{y,\rho(y)/2})$. Due to the compactness of $\bar\Omega'$, there exists a finite subset of the balls, $B^j:=B_{y^{j},\rho(y^{j})/2}$, $j=1,2,...,J$, still covering $\bar\Omega'$. Let $\{\varphi_j(x)\in\D(B^j)\}_{j=1}^J$ be a partition of unity, $\sum_{j=1}^J\varphi_j(x)=1$ for any $x\in\Omega'$ and $U_j\in H^{s_2+2}(\R^n)$ be such that $U_j=u$ on $B^j$ and $\|U^j\|_{H^{s_2+2}(\R^n)}=\|u\|_{H^{s_2+2}(B^j)}$. Then by Theorem~\ref{GrL},
\begin{multline*}
\|u\|_{H^{s_2+2}(\Omega')}=\|\sum_{j=1}^J\varphi_ju\|_{H^{s_2+2}(\Omega')}
=\|\sum_{j=1}^J\varphi_jU^j\|_{H^{s_2+2}(\Omega')}
\le\sum_{j=1}^J\|\varphi_jU^j\|_{H^{s_2+2}(\R^n)}\\
\le C\sum_{j=1}^J\|\varphi_j\|_{W_\infty^\mu(\R^n)}\| U^j\|_{H^{s_2+2}(\R^n)}
= C\sum_{j=1}^J\|\varphi_j\|_{W_\infty^\mu(\R^n)}\|u\|_{H^{s_2+2}(B^j)},
\end{multline*}
for any $\mu>|s_2+2|$. Thus $u\in H^{s_2+2}(\Omega')$ for any compact $\bar\Omega'\subset\Omega$, implying $u\in H^{s_2+2}_{\mathrm{loc}}(\Omega)$ under conditions \eqref{range2+3}, \eqref{case-a}.

\paragraph{Step (ii)} Let us prove the theorem under conditions
$
s_2+1\le s_1<s_2+2,\quad -3<s_2\le-2,
$
 that are satisfied in region (ii) in Fig.~\ref{zones}. We proceed as in Step (i) but instead of estimate \eqref{Lyuest} for the term  $A_0^-U_\eta$ we split it into two parts
$$
A_0^-U_\eta=A_{01}^-U_\eta+A_{02}^-U_\eta, \quad
A_{01}^-U_\eta:=\sum_{i,j=1}^n \pa_i((\pa_ja_{ij}^-) U_\eta), \quad
A_{02}^-U_\eta:=-\sum_{i,j=1}^n \pa_i\pa_j(a_{ij}^- U_\eta)
$$
and estimate each of them as follows,
 \begin{multline*}
\|A_{01}^-U_\eta\|_{H^{s_2}(\R^n)}
\le C_4 \| (\nabla a^-)U_\eta\|_{H^{s_2+1}(\R^n)}
\le C_4 \| (\nabla a^-)U_\eta\|_{H^{s_1}(\R^n)}\\
\le
CC_4 \|a^-\|_{W_\infty^{|s_1|+1+\ve_1/2}(\R^n)}\|U_\eta\|_{H^{s_1}(\R^n)}
=
CC_4 \|a^-\|_{W_\infty^{|s_1-1|+\ve_1/2}(\R^n)}\|U_\eta\|_{H^{s_1}(\R^n)}
 \end{multline*}
where we have taken into account that $s_1<0$ in region (ii), and
\begin{multline*}
\|A_{02}^-U_\eta\|_{H^{s_2}(\R^n)}
\le C_4 \| a^- U_\eta\|_{H^{s_2+2}(\R^n)}
\le CC_4 \| a_0^- U_\eta\|_{H^{s_2+2}(\R^n)}
\le CC_4 \| a^-\|_{C^{|s_2+2|+\ve_2/2}(\ov B_{\rho})}\| U_\eta\|_{H^{s_2+2}(\R^n)}
 \end{multline*}
Taking into account that  $\|A_{02}^-U_\eta\|_{H^{s_2}(\R^n)}$  can be made arbitrarily small by choosing sufficiently small ball radius $\rho$, as in Step (i), since $0\le |s_2+2|<1$, this proves the theorem for region (ii).

\paragraph{Step (iii)} Let us prove the theorem under conditions
\be\label{iiic}
s_2+1\le s_1<s_2+2,\quad s_2\le-3,
\ee
 that are satisfied in region (iii) in Fig.~\ref{zones}.
For arbitrary  $\Omega'\Subset\Omega$ let $\eta\in C^\infty(\Omega)$ with $\supp\eta\in\Omega$ 
and $\eta=1$ in $\Omega'$. Then the function $U_\eta=\eta u\in H^{s_1}(\R^n)$ satisfies equation
\be\label{f+Leta}
A_0U_\eta=f_\eta,\quad f_\eta=\eta f+A_{\eta}u \quad\mbox{in } \R^n,
 \ee
where
$A_0$ is given by \eqref{2.1_0}, $A_{\eta}$ by \eqref{Leta},
while $A_{\eta}u\in H^{s_2}( \R^n)$ by estimate \eqref{Lrho}.
This implies $f_\eta\in H^{s_2}( \R^n)$.

Let $\displaystyle k=-\left\lfloor\frac{-1-s_2}{2}\right\rfloor=\left\lceil\frac{1+s_2}{2}\right\rceil$ and let us denote $v:=\J^{2k} U_\eta$. Then $k\le -1$ by the second condition in \eqref{iiic}, while $v\in H^{s_1-2k}(\R^n)$. Acting by $\J^{2k}$ on \eqref{f+Leta}, we arrive at the following equation for $v$
 \bea\label{JLv}
 A_0(v)=f_v, \quad\mbox{in } \R^n,
 \eea
where
$
 f_v=\J^{2k}f_\eta- [\J^{2k} A_0 u-A_0\J^{2k} u].
$
To employ Corollary~\ref{JLcomm} with $s=s_1$, we have for its parameter,
$$
\sigma=\left|s_1-k-\frac{1}{2}\right|+|k|+\frac{1}{2}
=-s_1+k+\frac{1}{2}+|k|+\frac{1}{2}=1-s_1
$$
since
$\displaystyle 0<-k\le\frac{-1-s_2}{2}< \frac{1-s_1}{2}$
due to the first condition in \eqref{iiic}. Then by the theorem hypothesis on the coefficients, the conditions of Corollary~\ref{JLcomm}  are satisfied, which implies $[\J^{2k} A_0 u-A_0\J^{2k} u]\in H^{s_1-2k-1}(\R^n)$. Then taking into account the first condition in \eqref{iiic} again, we obtain $f_v\in H^{s_2-2k}(\R^n)$. Denoting $s'_1=s_1-2k$, $s'_2=s_2-2k$, we arrive at equation \eqref{JLv} for $v\in H^{s_1'}(\R^n)$ with $f_v\in H^{s_2'}(\R^n)$, where
$
s'_2+1\le s'_1<s'_2+2,\quad -3<s'_2\le 1,
$
and coefficients
$a\in \bar{C}^{|s_1-1|}_+(\overline\Omega)\bigcap\bar{C}^{|s_2+1|}_+(\overline\Omega)\subset
\bar{C}^{|s'_1-1|}_+(\overline\Omega)\bigcap\bar{C}^{|s'_2+1|}_+(\overline\Omega) $,
which is covered by Steps (i) and (ii) implying $v\in H^{s_2'+2}(\R^n)=H^{s_2+2-2k}(\R^n)$. Thus, $U_\eta:=\J^{-2k}v\in H^{s_2+2}(\R^n)$.
This gives $ u\in H^{s_2+2}(\Omega')$, which implies the theorem claim in region (iii).

\paragraph{Step (iv)} Let us prove the theorem under conditions
$
s_2+1\le s_1<s_2+2,\quad s_2\ge0,
$
 that are satisfied in region (iv) in Fig. \ref{zones}.
Let $\alpha$ be a multiindex such that $|\alpha|=\lfloor s_2\rfloor+1$. Then
\eqref{f+Leta} implies
\be\label{alphaf}
A_0\partial^\alpha U_\eta=\pa^\alpha f_\eta+ f^\alpha_{\eta},\quad
f^\alpha_{\eta}=A_0\partial^\alpha U_\eta-\partial^\alpha A_0 U_\eta.
 \ee
Since $f^\alpha_{\eta}$ is a commutator, we obtain that
$f^\alpha_{\eta}\in H^{s_1-|\alpha|-1}(R^n)\subset H^{s_2-|\alpha|}(R^n)$,
where the theorem hypothesis on smoothness of the coefficient matrix $a$ and Theorem~\ref{GrL} were taken into account. Then $\pa^\alpha f_\eta+ f^\alpha_{\eta}\in H^{s_2-\alpha}(R^n)$ giving
$\partial^\alpha U_\eta\in H_{\mathrm{loc}}^{s_2-|\alpha|+2}(R^n)$ by Step (i), which implies $ u\in H^{s_2+2}_{\rm loc}(\Omega)$, i.e. the theorem claim for region (iv).

\paragraph{Step (v)} Now we finally prove the theorem for
  $s_2> s_1-1$,
i.e. for region (v).
Since $f\in H^{s_2}(\Omega')$ on any open set $\Omega'\Subset\Omega$, we have also $f\in H^{s_1-1}(\Omega')$,  i.e., we arrive at the situation covered by Steps (i)-(iv) with $s_2=s_1-1$, which implies $u\in H^{s_1+1}(\Omega')$. If $s_1\le s_2$, we iterate this procedure, obtaining at the end $u\in H^{s_2+2}(\Omega')$, i.e. the theorem claim, if  $s_2-s_1$ is integer, or $u\in H^{s_1+k}(\Omega')$, where $k=\lfloor s_2-s_1+2\rfloor$, otherwise. Recalling in the latter case that $f\in H^{s_2}(\Omega')$  we can apply the corresponding steps from (i)-(iv) again, which finishes the proof for region (v).
\end{proof}

Theorem~\ref{RegTh} gives solution regularity on any sub-domain $\Omega'$ with compact closure $\bar\Omega'\subset\Omega$. The following theorem generalizes it to sub-domain $\Omega'$ with non-compact closure $\bar\Omega'\subset\Omega$ and particularly proves regularity at infinity for exterior (unbounded) domains.

\begin{theorem}\label{RegThInf}
 Let $\Omega$ be $\R^n$ or an open exterior or interior domain with a compact boundary in  $\R^n$, $s_1\in\R$, $s_2>s_1-2$, $u\in H^{s_1}(\Omega')^m$ and $f\in H^{s_2}(\Omega')^m$ on any open set $\Omega'\Subset\Omega$, $m\ge 1$. Let $u$ satisfy either
\\
(a) elliptic (in the Petrovsky sense) system \eqref{2.1-H}, $Au=f$, in $\Omega$ with 
$\{a,b,c\}\in {\cal C}_+^{s_1-1}(\R^n)\bigcap {\cal C}_+^{s_2+1}(\R^n)$
or
\\
(b) elliptic (in the Petrovsky sense) system \eqref{2.1*-H}, $A^*u=f$, in $\Omega$ with 
$\{a,b,c\}\in {\cal C}_+^{1-s_1}(\R^n)\bigcap {\cal C}_+^{-s_2-1}(\R^n),$\\
and in the case of the infinite domain $\Omega$ there exist finite matrices $a_{ij}(\infty):=\lim_{x\to\infty}a_{ij}(x)$ satisfying the ellipticity condition  $\det\sum_{i,j=1}a_{ij}(\infty)\xi_i\xi_j\not=0$. Then $u\in H^{s_2+2}(\Omega')^m$ on any $\Omega'\Subset\Omega$.
\end{theorem}

\begin{proof} The theorem claim for  subdomains $\Omega'\Subset\Omega$ with compact closure is implied by Theorem~\ref{RegTh}. To complete the proof, we have to consider an infinite subdomain $\Omega'\Subset\Omega$ of an infinite domain $\Omega$. Note that the theorem hypothesis $s_2>s_1-2$ implies that either $s_1\not=1$ or $s_2\not=-1$ and thus $a\in \bar C^\mu(\Omega)$ for any $\Omega'\Subset\Omega$ for some $\mu>0$ and particularly, $a\in C(\Omega)$ (maybe after adjusting $a$ on a zero measure set, that we will assume to be done).

The proof follows the pattern of the proof of Theorem~\ref{RegTh} and  we will mostly refer to that proof instead of repeating it whenever possible. We give only a proof for part (a) of the theorem; the proof for part (b) is similar.
\paragraph{Step (i)} Let the coefficients $a,b,c$ be not generally constant, $\Omega$ be either $\R^n$ or an open exterior domain with a compact boundary in $\R^n$.
In the latter case let $u$ be extended outside $\Omega$ to $u^e\in H^{s_1}(\R^n)$, and we will further drop the superscript $e$ for brevity.
Let  $B_{\rho}=B_{0,\rho}$ be an open ball of radius $\rho$ centred at zero.
Let $\rho$ be sufficiently large, so that $B_\rho$ includes the boundary of $\Omega$ (if $\Omega\not=\R^n$). Let us chose a cut-off function $\eta \in C^\infty(\R^n)$ such that  $\eta (x)=1$ in $\R^n\backslash B_{2\rho}$ and $\eta (x)=0$ in $B_\rho$. Denoting $U_\eta(x):=\eta (x)u(x)$ we obtain that  $\supp U_\eta\subset\R^n\backslash B_{\rho}\subset\Omega$.

Then the function $U_\eta$ satisfies equation
 \begin{equation}\label{Lx0inf}
A_{0\infty}U_\eta=\eta f+A_{\eta}u-A_{\infty}^-U_\eta\quad\text{in}\ \R^n.
\end{equation}
Here $A_{0\infty}$ is the principal part of the operator with the constant coefficient matrix $a(\infty)$, i.e.,
\bea
A_{0\infty} U_\eta&:=&-\sum_{i,j=1}^n a_{ij}(\infty)\pa_i\pa_j U_\eta,\\
A_{\eta}u&=& -\sum_{i,j=1}^n (\pa_i\eta )a_{ij}\pa_j u
 -\sum_{i,j=1}^n\pa_i[(\pa_j\eta )a_{ij} u]
 -\sum_{j=1}^n\eta b_j \pa_ju
 - \eta cu,\label{Letainf}\\
 A_{\infty}^-U_\eta&=&-\sum_{i,j=1}^n \pa_i(a_{ij}^-\pa_j U_\eta),
\label{Linfty}\eea
where $a^-(x)=a(x)-a(\infty)$.

Let
\be\label{range2+3inf}
s_2+1\le s_1<s_2+2,
\ee
see Fig.~\ref{zones}. Then by Theorem~\ref{GrL}, we again, as in the proof of Theorem~\ref{RegTh}  arrive at estimate \eqref{Lrho}, where $C_3(\rho)$ is defined by \eqref{C3}.

Let us define $a_\infty^-(x)=\begin{cases}
a^-(x),& x\in \R^n\backslash B_{\rho}\\
\frac{|x|}{\rho}a^-\left(\frac{x\rho}{|x|}\right),& x\in \bar B_{\rho}
\end{cases}$.\\
Then evidently $\|a_\infty^-\|_{ C(\R^n)}=\|a^-\|_{ C(\R^n\backslash B_{\rho})}\to 0$ as $\rho\to\infty$.
Moreover, it is easy to check (see Appendix~\ref{Est}) that
$\|a_\infty^-\|_{ C^{\mu}(\R^n)}\le 3\|a^-\|_{ C^{\mu}(\R^n\backslash B_{\rho})}$ for any $\mu\in[0,1]$
and sufficiently large $\rho$, and
$\|a_\infty^-\|_{ C^{\mu}(\R^n)}\to 0$ as $\rho\to\infty$ if $a^-\in C^{\mu+\epsilon}(\Omega)$  for some $\varepsilon>0$.

Thus, since $\supp U_\eta\subset \R^n\backslash B_{\rho}$, we have from \eqref{Linfty} by Theorem \ref{GrL},
\begin{multline}\label{Lyuestinf}
\|A_{\infty}^-U_\eta\|_{H^{s_2}(\R^n)}
 \le C_4 \| a^-\nabla U_\eta\|_{H^{s_2+1}(\R^n)}=C_4 \| a_\infty^-\nabla U_\eta\|_{H^{s_2+1}(\R^n)}\\
\le
 CC_4 \| a_\infty^-\|_{C^{|s_2+1|+\ve_2/2}(\R^n)}\| \nabla U_\eta\|_{H^{s_2+1}(\R^n)}
 \le CC_4 \| a_\infty^-\|_{C^{|s_2+1|+\ve_2/2}(\R^n)}C_5\| U_\eta\|_{H^{s_2+2}(\R^n)}
 \end{multline}
for  any $\ve_2$ such that
\be\label{ve2inf}
\ve_2\in\R_+(s_2),\quad |s_2+1|+\ve_2/2< 1.
\ee

Applying estimate \eqref{estu} to equation \eqref{Lx0inf} and taking into account estimates \eqref{Lrho} and  \eqref{Lyuestinf}, we then have under conditions \eqref{range2+3inf} and  \eqref{ve2inf},
\begin{eqnarray}\label{estu21inf}
 C_6(\rho)\|U_\eta\|_{H^{s_2+2}(\R^n)}
 &\le& C_7(\rho)\|f\|_{H^{s_2}(\R^n\setminus \bar B_{\rho})}
 +C_8(\rho)\|u \|_{H^{s_1}(\R^n\setminus \bar B_{\rho})},
\end{eqnarray}
 $$
 C_6(\rho):=1-C_1 CC_4C_5\| a_\infty^-\|_{C^{|s_2+1|+\ve_2/2}(\R^n)},\
 C_7(\rho):= C_1C\|\eta \|_{\bar C^{|s_2|+\ve_2}(\R^n\setminus B_{\rho})},\
 C_8(\rho):=C_1C_3(\rho)+  C_7(\rho).
 $$
 The parameter $C_7(\rho)$ and, due to the theorem hypotheses, also  $C_3(\rho)$ and thus $C_8(\rho)$ are  finite  for any $\rho\in (0,\infty)$.

Further in this step we prove that $C_6(\rho)$ is positive for sufficiently large $\rho$ under conditions
 $s_2+1\le s_1<s_2+2$,
$ |s_2+1|<1,$
which correspond to region (i) in Fig. \ref{zones}.

Let first  $s_2=-1$, and consider estimate \eqref{estu21inf} with
$s_2+1=\ve_2=0$.  Since  $a^-(\infty)=0$,  the norm
$\|a^-_\infty\|_{C^{|s_2+1|+\ve_2/2}(\R^n)}=\|a^-_\infty\|_{C(\R^n)}$ for  sufficiently large  $\rho<\infty$ becomes small enough for $C_6(\rho)$ in \eqref{estu21inf} to be positive.

Let now  $0<|s_2+1|<1$.  Due to the theorem hypothesis, there exists $\ve_2\in (0,1-|s_2+1|)$ such that $a^-\in C^{|s_2+1|+\ve_2}(\R^n\setminus B_{\rho})$, which implies
$\| a^-_\infty\|_{C^{|s_2+1|+\ve_2/2}(\R^n)}\to 0$ as $\rho\to 0$.
Thus again  for  sufficiently large  $\rho$, the norm $\| a^-_\infty\|_{C^{|s_2+1|+\ve_2/2}(\R^n)}$ becomes small enough for $C_6(\rho)$ in \eqref{estu21} to be positive.

This means that in the both cases $U_\eta\in H^{s_2+2}(\R^n)$ implying $u\in H^{s_2+2}(\Omega'\setminus B_{2\rho})$ for sufficiently large $\rho$. Taking into account that  $u\in H^{s_2+2}(\Omega'\bigcap B_{3\rho})$ for any $\rho$ by Theorem~\ref{RegTh}, we arrive at the present theorem claim in region (i).

\paragraph{Step (ii)}
Let us prove the theorem under conditions
$
s_2+1\le s_1<s_2+2,\quad -3<s_2\le-2,
$
 that are satisfied in region (ii) in Fig.~\ref{zones}.
 We proceed as in Step (i) but instead of estimate \eqref{Lyuestinf} for the term  $A_\infty^-U_\eta$ we split it into two parts
$$
A_\infty^-U_\eta=A_{\infty 1}^-U_\eta+A_{\infty 2}^-U_\eta, \quad
A_{\infty 1}^-U_\eta:=\sum_{i,j=1}^n \pa_i((\pa_ja_{ij}^-) U_\eta), \quad
A_{\infty 2}^-U_\eta:=-\sum_{i,j=1}^n \pa_i\pa_j(a_{ij}^- U_\eta)
$$
and estimate each of them as follows,
 \begin{multline*}
\|A_{\infty 1}^-U_\eta\|_{H^{s_2}(\R^n)}
\le C_4 \| (\nabla a^-)U_\eta)\|_{H^{s_2+1}(\R^n)}
\le C_4 \| (\nabla a^-)U_\eta\|_{H^{s_1}(\R^n)}\\
\le
CC_4 \|a^-\|_{W_\infty^{|s_1|+1+\ve_1/2}(\R^n)}\|U_\eta\|_{H^{s_1}(\R^n)}
=
CC_4 \|a^-\|_{W_\infty^{|s_1-1|+\ve_1/2}(\R^n)}\|U_\eta\|_{H^{s_1}(\R^n)}
 \end{multline*}
where we took into account that $s_1<0$ in region (ii), and
\begin{multline*}
\|A_{\infty 2}^-U_\eta\|_{H^{s_2}(\R^n)}
\le C_4 \| a^- U_\eta\|_{H^{s_2+2}(\R^n)}
\le CC_4 \| a_0^- U_\eta\|_{H^{s_2+2}(\R^n)}
\le CC_4 \| a^-\|_{C^{|s_2+2|+\ve_2/2}(\ov B_{\rho})}\| U_\eta\|_{H^{s_2+2}(\R^n)}
 \end{multline*}
Taking into account that  $\|A_{\infty 2}^-U_\eta\|_{H^{s_2}(\R^n)}$  can be made arbitrarily small by choosing sufficiently large $\rho$, as in Step (i), since $0\le |s_2+2|<1$, this proves the theorem for region (ii).

\paragraph{Steps (iii)-(v)}
The proofs of the theorem under condition  $s_2<-3$ and under condition  $s_2\ge 0$, in addition to condition \eqref{range2+3inf} coincide word-for-word with the proof in Steps (iii) and (iv), respectively, of Theorem~\ref{RegTh},  while for $s_2>s_1-1$ with the proof in Step (v) of the same theorem.

\end{proof}

\begin{remark}
Conditions on the PDE coefficients in  Theorem~\ref{RegThInf}  can be evidently relaxed to the corresponding conditions for all domains $\Omega'\Subset \Omega$ (implying that the coefficients are extendable from such $\Omega'$ to the whole $\R^n$ such that the conditions hold) supplemented with the continuity of the coefficient $a$ at infinity for the extensions.
\end{remark}

\begin{remark}
The Theorem~\ref{RegThInf} proof works also for domains $\Omega$ with a non-compact boundary and for an open set $\Omega'$ for which there exist another open set $\Omega''$ such that $\Omega'\Subset\Omega''\Subset\Omega$ and a cut-off function $\eta' \in C^\infty(\R^n)$ with sufficient number of bounded derivatives in $\R^n$ such that  $\eta' (x)=1$ in $\Omega'$ and $\eta' (x)=0$ in $\R^n\setminus\Omega''$. In the first paragraph of Step (i) we can chose then a cut-off function $\eta_\rho \in C^\infty(\R^n)$  such that $\eta_\rho (x)=1$ in $\R^n\setminus B_{2\rho}$ and $\eta_\rho (x)=0$ in $B_\rho$. Defining $\eta (x):=\eta'(x)\eta_\rho(x)$ we have $\eta(x)=1$ in $\Omega'\setminus B_{2\rho}$ and $\eta (x)=0$ in $(\R^n\setminus\Omega'')\bigcup B_\rho$. Then the support of
$U_\eta(x):=\eta(x)u(x)$ belongs to $\bar\Omega''\backslash B_{\rho}\subset\Omega$ and we can follow the proof of Theorem~\ref{RegThInf} as before.
\end{remark}

\section{Extensions and generalized co-normal
derivatives for PDE systems with non-smooth coefficients}\label{S3sm3}
\subsection{Extension of partial differential operators}
Let $\ha<s<\tha$ and $\{a,b,c\}\in {\cal C}_+^{s-1}(\overline\Omega)$ (which for the case $s=1$ means $a,b,c\in L_\infty(\Omega)$).
In addition to the operator $A$ defined by \eqref{LH1-H}, let us consider also the {\em aggregate} partial differential operator
 $\check{A}$,
defined as,
\begin{equation}\label{Ltil-H}
    \langle \check{A}u,v \rangle_\Omega:=\check\E(u,v)\quad \forall v\in
    {H}^{2-s}(\Omega),
\end{equation}
where
\be\label{Echeckdef-H}
\check\E(u,v)=\check\E_\Omega(u,v):=
  \sum\limits_{i,j=1}^{n} \left\langle \s{E}^{s-1}(a_{ij}\pa_j u) , \pa_i v\right\rangle_{\Omega}
 +\sum\limits_{j=1}^{n} \left\langle \s{E}^{s_b(s)}(b_{j}\pa_j u ) ,  v\right\rangle_{\Omega}
 +\left\langle \s{E}^{s_c(s)}(c u)  ,  v\right\rangle_{\Omega},
 \ee
$\s{E}^{s-1}: H^{s-1}(\Omega)\to \s H^{s-1}(\Omega)$, $\s{E}^{s_b(s)}: H^{s_b(s)}(\Omega)\to \s H^{s_b(s)}(\Omega)$, $\s{E}^{s_c(s)}: H^{s_c(s)}(\Omega)\to \s H^{s_c(s)}(\Omega)$ are bounded extension operators, which are unique by \cite[Theorem~2.16]{MikJMAA2011} (Theorem~\ref{ExtOper-H} in Appendix~\ref{S2.2-H}) since $-\ha<s-1<\ha$ and $-\ha<s_b(s)\le 0$, $s_c(s)=0$ by \eqref{sbsc}.
Then the bilinear form $\check\E(u,v): H^s(\Omega)\times H^{2-s}(\Omega)\to \C$ is bounded by Theorem \ref{GrL}, implying that the operator $\check{A}: H^{s}(\Omega)\to\s{H}^{s-2}(\Omega)=[{H}^{2-s} (\Omega)]^*$ is bounded, for $\ha<s<\tha$.

Note that by \eqref{dp2}-\eqref{dp3-H} one can rewrite \eqref{Ltil-H} also as
$
    (\check Au,v )_{\Omega}:=\Phi(u,v)\quad \forall v\in
    {H}^{2-s}(\Omega),
$
where $\Phi(u,v)=\ov{\check\E(u,\bar v)}$ is the sesquilinear form.

If $s=1$, i.e. $u,v\in H^1(\Omega)$, then evidently
\bes
\check\E(u,v)=\E(u,v)=
  \int_\Omega\left[\sum\limits_{i,j=1}^{n}  (a_{ij}\pa_j u) \cdot \pa_i v
 +\sum\limits_{j=1}^{n} (b_{j}\pa_j u ) \cdot  v
 + c u \cdot   v\right]\,dx .
 \ees

For $\ha<s<\tha$ and $\{a,b,c\}\in {\cal C}_+^{s-1}(\overline\Omega)$ let us consider also the {\em aggregate} operator
 $\check{A}^*: H^{2-s}(\Omega)\to\s{H}^{-s}(\Omega)=[{H}^{s} (\Omega)]^*$,
defined as,
\begin{equation}\label{Ltil*-H}
    \langle \check{A}^*v,u \rangle_\Omega:=\check\E^*(v,u)\quad \forall u\in
    {H}^{s}(\Omega),
\end{equation}
\begin{multline}\label{E*E-H}
    \check\E^*(v,u)=\ov{\check\E(\bar u,\bar v)}=\Phi(\bar u,v)
=
  \sum\limits_{i,j=1}^{n} \left\langle \bar a_{ij}\pa_j u , \s{E}^{1-s}\pa_i v\right\rangle_{\Omega}
 +\sum\limits_{j=1}^{n} \left\langle \bar b_{j}\pa_j u ,\s{E}^{-s_b(s)} v\right\rangle_{\Omega}
 +\left\langle \bar c u , \s{E}^{-s_c(s)}v\right\rangle_{\Omega}\quad
\end{multline}
by \eqref{Echeckdef-H} since $(\s{E}^{p})^*=\s{E}^{-p}$ for $-\ha<p<\ha$ by \cite[Theorem~2.16]{MikJMAA2011} (Theorem~\ref{ExtOper-H} in Appendix~\ref{S2.2-H}).

Due to Theorem \ref{GrL} and relations \eqref{Ltil*-H}, \eqref{E*E-H} and  \eqref{Ltil-H}, we have the following statement.
\begin{theorem}\label{T4.4c}
If $\ha<s<\tha$ and $\{a,b,c\}\in {\cal C}_+^{s-1}(\overline\Omega)$, then bilinear form \eqref{Echeckdef-H}, $\check\E: \{H^s(\Omega), H^{2-s}(\Omega)\}\to\C$ is bounded and expressions \eqref{Ltil-H},   \eqref{Ltil*-H} define  bounded linear operators
 $
 \check{A}: H^s(\Omega)\to\s{H}^{s-2}(\Omega)$,
 $
 \check{A}^*: H^{2-s}(\Omega)\to\s{H}^{-s}(\Omega),
 $
and the aggregate second Green's identity holds true in the following form,
\begin{equation}\label{A2GI-H}
\langle \check{A}u,\bar v \rangle_\Omega=\langle u, \ov{\check{A}^*v}\rangle_\Omega,\quad u\in
    {H}^{s}(\Omega),\quad v\in H^{2-s}(\Omega), \quad\ha<s<\tha.
\end{equation}
\end{theorem}

For any $u\in H^s(\Omega)$, $\ha<s<\tha$, the
functional $\check{A}u$ belongs to $\s{H}^{s-2}(\Omega)$ and is an
extension of the functional ${A}u\in {H}^{s-2}(\Omega)$ from the
domain of definition  $\s{H}^{2-s}(\Omega)$ to the domain of
definition ${H}^{2-s}(\Omega)$.  Similarly, for any $v\in H^{2-s}(\Omega)$, $\ha<s<\tha$, the
distribution $\check{A}^*v$ belongs to $\s{H}^{-s}(\Omega)$ and is  an extension of the functional ${A}^*v\in {H}^{-s}(\Omega)$ from the
domain of definition  $\s{H}^{s}(\Omega)$ to the domain of
definition ${H}^{s}(\Omega)$.

The distribution $\check{A}u$ is not the only possible extension of the functional ${A}u$, and any functional of the form
\begin{equation}\label{Ext-H}
     \check{A}u +g,\quad  g\in
    {H}^{s-2}_{\partial\Omega}
\end{equation}
gives another extension. On the other hand, any extension of the domain
of definition of the functional ${A}u$ from $\s{H}^{2-s}(\Omega)$ to
${H}^{2-s}(\Omega)$ has evidently form \eqref{Ext-H}. The existence of
such extensions is provided by \cite[Theorem~2.16]{MikJMAA2011} (Theorem~\ref{ExtOper-H} in Appendix~\ref{S2.2-H}).

\subsection{Generalized co-normal derivatives}

Let $\gamma^+: H^s(\Omega)\to H^{s-\frac{1}{2}}(\partial\Omega)$ denote the trace operator, which is bounded on Lipschitz domains for $\frac{1}{2}<s<\frac{3}{2}$.

For $u\in H^s(\Omega)$, $s>\tha$,  and $a\in C(\bar\Omega)$,
the strong (classical) co--normal derivative operator
 \begin{equation}\label{clCD-H}
T^+_cu(x) := \sum_{i,j=1}^n
a_{ij}(x)\,\gamma^+[\pa_j u(x)]\nu_i(x)
\end{equation}
is well defined on $\pO$ in the sense of traces. Here $\gamma^+[\pa_j u]\in H^{s-\tha}(\pO)\subset L_2(\pO)$ if $\tha<s<\frac{5}{2}$, while the outward (to $\Omega$) unit normal vector $\nu(x)$ at
the point $x\in \pO$ belongs to $L_\infty(\pO)$ for the Lipschitz boundary $\pO$, implying  $T^+_cu\in L_2(\pO)$.
Note that for Lipschitz domains, $T^+_c u$ does not generally belong to $H^s(\pO)$, $s>0$, even for infinitely smooth $u$.

A definition  of
the generalized co--normal derivative is given in \cite[Lemma 4.3]{McLean2000-H} for $s=1$ (cf. also \cite[Lemma 2.2]{Kohr-Pintea-Wendland2010-H} for the generalized co--normal derivative on a manifold boundary) and in \cite{MikJMAA2011} for $\ha< s<\tha$ and infinitely smooth coefficients. We can now extend the definition to the range of Sobolev spaces and non-smooth coefficients.

\begin{definition}\label{GCDd-H}
Let $\Omega$ be a Lipschitz domain, $\ha< s<\tha$,  $u\in H^s(\Omega)$, $\{a,b,c\}\in {\cal C}_{+}^{s-1}(\overline\Omega)$, and $Au=\tilde{f}|_{_\Omega}\in {H}^{s-2}(\Omega)$ in
$\Omega$ for some $\tilde{f}\in \s{H}^{s-2}(\Omega)$. Let us
define {\em the generalized co--normal derivative\ }
$T^+(\tilde{f},u) \in H^{s-\tha}(\pO)$  as
\begin{equation*}
\left\langle
 T^+(\tilde{f},u)\,,\, w
\right\rangle _{\pO}:=
\check\E(u,\gamma_{-1} w)-\langle \tilde{f},\gamma_{-1} w \rangle_\Omega
=\langle \check{A}u - \tilde{f},\gamma_{-1} w \rangle_\Omega
  \quad  \forall\ w\in H^{\tha-s} (\pO),
\end{equation*}
where $\gamma_{-1} : H^{\tha-s}(\pO)\to H^{2-s}(\Omega)$ is a bounded
right inverse to the trace operator.
\end{definition}

\begin{theorem}\label{GCDl-H}
Under the hypotheses of Definition \ref{GCDd-H}, the generalized
co--normal derivative
$T^+(\tilde{f},u)$ is independent of the operator $\gamma_{-1}$, the estimate
$
\|T^+(\tilde{f},u)\|_{H^{s-\tha}(\pO)}\le
C_1\|u\|_{H^s(\Omega)} + C_2\|\tilde{f}\|_{\s{H}^{s-2}(\Omega)}
$
takes place, and the first Green's identity holds in the following
form,
\begin{equation}
\label{Tgen-H} \left\langle
 T^+(\tilde{f},u)\,,\, \gamma^+v
\right\rangle _{\pO}
=\check\E(u,v)-\langle \tilde{f},v \rangle_\Omega
=\langle\check{A}u - \tilde{f},v \rangle_\Omega
  \quad
\forall\ v\in H^{2-s}(\Omega).
\end{equation}
\end{theorem}
\begin{proof} The proof of the theorem coincides word-for-word with the proof of its counterpart for infinitely smooth coefficients, Theorem~3.2  in \cite{MikJMAA2011} .
\end{proof}

Because of the involvement of $\tilde{f}$, the
generalized co-normal derivative $T^+(\tilde{f},u)$  is
generally {\em non-linear} in $u$. It becomes linear if a linear
relation is imposed between $u$ and $\tilde{f}$ (including
behaviour of the latter on the boundary $\partial\Omega$), thus
fixing an extension of
$\tilde{f}|_{_\Omega}=Au$ into
$\s{H}^{s-2}(\Omega)$. For example, $\tilde{f}|_{_\Omega}$ can be
extended as
 $\check{f}:=\check{A}u,$
which generally does not coincide with $\tilde{f}$. Then obviously,
$T^{+}(\check{f},u)=T^{+}(\check{A}u,u)=0$, meaning that the co-normal derivatives associated with any other possible extension $\tilde{f}$ appear to be aggregated in $\check{f}$ as
 \be
\label{ftot-H}
\langle \check{f},v \rangle_\Omega=\langle \tilde{f},v \rangle_\Omega+ \left\langle T^+(\tilde{f},u)\,,\, \gamma^+v \right\rangle _{\pO}
  \quad
\forall\ v\in H^{2-s}(\Omega)
 \ee
due to \eqref{Tgen-H}. This justifies the term {\em aggregate} for the extension $\check{f}$, and thus for the operator $\check{A}u$.

As follows from Definition~\ref{GCDd-H}, the generalized co-normal derivative is still linear with respect to the couple $(\tilde f, u)$, i.e.,\
$
T^+(\alpha_1\tilde{f}_1,\alpha_1u_1) + T^+(\alpha_2\tilde{f}_2,\alpha_2u_2)=
T^+(\alpha_1\tilde{f}_1+\alpha_2\tilde{f}_2,\alpha_1u_1+\alpha_2u_2)
$
for any complex numbers $\alpha_1,\alpha_2$.

In fact, for a given function $u\in H^s(\Omega)$, $\ha<s<\tha$, any
distribution $\tau\in H^{s-\tha}(\pO)$ may be nominated as a co-normal
derivative of $u$, by an appropriate extension $\tilde{f}$ of the
distribution $Au\in {H}^{s-2}(\Omega)$ into $\s{H}^{s-2}(\Omega)$. This extension is
again given by the second Green's identity \eqref{Tgen-H} re-written as
follows (cf. \cite[Section 2.2, item 4]{Agranovich2003RMS-H} for $s=1$),
\begin{equation}
\label{Luext-H}
 \langle \tilde{f},v \rangle_\Omega:=
 \check\E(u,v)-\left\langle \tau ,  \gamma^{+}v \right\rangle _{\pO}=
\langle \check{A}u-\gamma^{+*}\tau,v \rangle_\Omega
  \quad  
\forall\ v\in H^{2-s} (\Omega).
\end{equation}
Here the operator $\gamma^{+*} : H^{s-\tha}(\pO) \to
\s{H}^{s-2}(\Omega)$ is adjoint to the trace operator, $\langle
\gamma^{+*}\tau, v\rangle_\Omega :=\left\langle \tau , \gamma^{+}v
\right\rangle _{\pO}$ for all $\tau\in H^{s-\tha}(\pO)$ and $v\in
H^{2-s} (\Omega)$. Evidently, the distribution $\tilde{f}$ defined by
\eqref{Luext-H} belongs to $\s{H}^{s-2}(\Omega)$ and is an extension of
the distribution $Au$ into $\s{H}^{s-2}(\Omega)$ since $\gamma^{+}v=0$
for $v\in \s{H}^{2-s}(\Omega)$.

For $u\in C^1(\ov{\Omega})\subset H^1(\Omega)$,  one can take $\tau$ equal to the strong co-normal derivative, $T^+_c u \in L_\infty(\pO)$, and relation \eqref{Luext-H} can be considered as the {\em classical extension} of $f=Au\in H^{-1}(\Omega)$ to $\tilde f_c\in \s H^{-1}(\Omega)$, which is evidently linear.

For a sufficiently smooth function $v$ and $a,b\in C(\bar\Omega)$, let
 \begin{equation*}\label{clCDm-H}
 T^+_{*c}v(x) := \sum\limits_{i,j=1}^n
\bar a_{ji}^\top(x)\,\gamma^+[\pa_j v(x)]\nu_i(x)+\sum_{i=1}^n \, \bar b_{i}^\top(x)\gamma^+v(x)\nu_i
 \end{equation*}
be the strong (classical) modified co-normal derivative (it corresponds to  $\s{\mathfrak{B}}_\nu v$ in \cite{McLean2000-H}), associated with the  operator $A^*$.

If $v\in H^{2-s}(\Omega)$, $\{a,b,c\}\in {\cal C}_{+}^{s-1}(\overline\Omega)$, $\ha< s<\tha$,  and $A^*v=\tilde{f}_*|_{_\Omega}$ in
$\Omega$ for some $\tilde{f}_*\in \s{H}^{-s}(\Omega)$, we
define {\em the generalized modified co--normal derivative\ } $ T^+_*(\tilde{f}_*,v) \in H^{\ha-s}(\pO)$,  associated with the  operator $A^*$, similar to  Definition \ref{GCDd-H}, as
 $$
\left\langle
  T^+_*(\tilde{f}_*,v)\,,\, w
\right\rangle _{\pO}:=
\check\E^*(v,\gamma_{-1} w)-\langle \tilde{f}_*,\gamma_{-1} w \rangle_\Omega
  \quad  \forall\ w\in H^{s-\ha} (\pO).
 $$
As in Theorem \ref{GCDl-H}, this leads to the following first Green's identity for the function $v$,
\begin{equation}
\label{Tgen*-H} \left\langle
 T^+_*(\tilde{f}_*,v)\,,\, \gamma^{+}u 
\right\rangle _{\pO}
=\check\E^*(v,u)-\langle \tilde{f}_*,u \rangle_\Omega
  \quad  
\forall\ u\in H^{s}(\Omega),
\end{equation}
which by \eqref{E*E-H}  implies
 \begin{equation}
\label{Tgen*bar-H} \left\langle
\gamma^{+}u,\ov{ T^+_*(\tilde{f}_*,v)}
\right\rangle _{\pO}
=\check\E(u,\bar v)-\langle u, \ov{\tilde{f}}_* \rangle_\Omega
  \quad  
\forall\ u\in H^{s}(\Omega).
\end{equation}
If, in addition, $Au=\tilde{f}|_{_\Omega}$ in $\Omega$ with some
$\tilde{f}\in \s{H}^{s-2}(\Omega)$, then combining \eqref{Tgen*bar-H} and
the first Green's identity \eqref{Tgen-H} for $u$, we arrive at the following generalized second Green's identity,
\begin{equation} \label{2.5sg*-H}
\langle \tilde{f},\bar v \rangle_\Omega\
 -\langle u, \ov{\tilde{f}}_*\rangle _{_\Omega}=
 \left\langle \gamma^{+}u, \ov{ T^+_*(\tilde{f}_*, v)}\right\rangle _{_\pO}
 - \left\langle T^+(\tilde{f},u)\,,\, \ov{\gamma^{+}v}\right\rangle _{_\pO}
 .
\end{equation}
By \eqref{Tgen*-H}, \eqref{Ltil*-H} and \eqref{Tgen-H}, \eqref{Ltil-H}, this, of course, leads to the aggregate second Green's identity \eqref{A2GI-H}.

\subsection{Generalized weak settings of boundary value problems}\label{NewPVPs-H}
Similar to the case of infinitely smooth coefficients in \cite[Section 3.2]{MikJMAA2011}, let us consider the generalized BVP weak settings  for PDE system \eqref{2.1-H} on an interior Lipschitz domain for $\ha<s<\tha$ and $\{a,b,c\}\in {\cal C}_{+}^{s-1}(\overline\Omega)$.

{\em The Dirichlet problem:} for $f\in H^{s-2}(\Omega)$ and $\varphi_0\in H^{s-\ha}(\pO)$, find $u\in H^s(\Omega)$ such that
\begin{eqnarray}\label{wD-H}
    \langle Au,v \rangle_\Omega&=&\langle f,v \rangle_\Omega\quad \forall v\in
    \s{H}^{2-s}(\Omega),\\
    \gamma^+u&=&\varphi_0\quad\mbox{on}\ \pO,\label{bD-H}
\end{eqnarray}
where $Au$ is defined by \eqref{LH1-H}.

{\em The Neumann problem:} for $\check f\in \s{H}^{s-2}(\Omega)$, find $u\in H^s(\Omega)$ such that
\begin{eqnarray}\label{wN-H}
    \langle \check Au,v \rangle_\Omega&=&\langle \check f,v \rangle_\Omega\quad \forall v\in
    {H}^{2-s}(\Omega),
\end{eqnarray}
where $\check Au$ is defined by \eqref{Ltil-H}.

{\em The mixed (Dirichlet-Neumann) problem:} for $\check f_m\in [{H}^{2-s}_{0} (\Omega,{\partial_D\Omega })]^*$ and $\varphi_0\in H^{s-\ha}(\partial_D\Omega)$, find $u\in H^s(\Omega)$ such that
\begin{eqnarray}\label{wM-H}
    \langle \check A_{\partial_D\Omega }u,v \rangle_\Omega&=&\langle \check f_m,v \rangle_\Omega\quad \forall v\in
    {H}^{2-s}_{0} (\Omega,\partial_D\Omega ),\\
    \gamma^+u&=&\varphi_0\quad\mbox{on}\ \partial_D\Omega.\label{wM2-H}
\end{eqnarray}
Here  $\check{A}_{\partial_D\Omega }: H^s(\Omega) \to
[{H}^{2-s}_{0} (\Omega,{\partial_D\Omega })]^*$ is the {\em mixed aggregate} operator defined
as
 $$
    \langle \check{A}_{\partial_D\Omega }u,v \rangle_\Omega:=
    \langle \check{A}u,v \rangle_\Omega=
\check\E(u,v)\quad \forall\ v\in {H}^{2-s}_{0} (\Omega,\partial_D\Omega ).
 $$
where, respectively, the Dirichlet and Neumann parts of the boundary, $\partial_D\Omega $  and $\partial_N\Omega =\pO\backslash \overline{\partial_D\Omega}$ are nonempty, open
sub--manifolds of $\pO$, and ${H}^{s}_{0} (\Omega,\partial_D\Omega)=\{w\in {H}^{s}(\Omega): \gamma^+ w=0$ on $\partial_D\Omega\}$.
The operator $\check{A}_{\partial_D\Omega }$ is bounded by the same argument as the aggregate operator $\check{A}$. For any $u\in H^s(\Omega)$, the
distribution $\check{A}_{\partial_D\Omega }u$ belongs to $[{H}^{2-s}_{0} (\Omega,{\partial_D\Omega })]^*$ and is an
extension of the functional ${A}u\in {H}^{s-2}(\Omega)$ from the
domain of definition  $\s{H}^{2-s}(\Omega)={H}^{2-s}_0(\Omega)\subset {H}^{2-s}_{0} (\Omega,\partial_D\Omega)$ to the domain of
definition ${H}^{2-s}_{0} (\Omega,\partial_D\Omega )$, and a restriction of the functional $\check{A}u\in \s{H}^{s-2}(\Omega)$ from the
domain of definition  ${H}^{2-s}(\Omega)\supset {H}^{2-s}_{0} (\Omega,\partial_D\Omega)$ to the domain of
definition ${H}^{2-s}_{0} (\Omega,\partial_D\Omega)$.

Note that one can take $v=\bar w$ to make the settings \eqref{wD-H}-\eqref{bD-H}, \eqref{wN-H} and \eqref{wM-H}-\eqref{wM2-H} in terms of the sesquilinear inner product and look more like the usual variational formulations, cf. e.g. \cite{LiMa1-H}.

The Dirichlet problem setting \eqref{wD-H}-\eqref{bD-H} coincides with the usual one, c.f. \cite{McLean2000-H}, (i.e., does not need a generalization), and the co-normal derivative does not evidently participate in it.
The Neumann and mixed problems are formulated in terms of the aggregate right hand sides  $\check f$ and $\check f_m$, respectively, prescribed on their own, i.e., without necessary splitting them into the given right hand side of the PDE inside the domain $\Omega$ and the part related with the co-normal derivative prescribed on the boundary. If, however, a PDE right hand side extension $\tilde f$ and an associated non-zero generalized co-normal derivative $T^+(\tilde{f},u)=\tau$ are prescribed instead, then $\check f$  can be expressed through it by relation \eqref{ftot-H} and  $\check f_m$ by relation
\begin{eqnarray*}
&&\langle \check{f}_m,v \rangle_\Omega=\langle \tilde{f},v \rangle_\Omega+ \left\langle \tau\,,\, \gamma^+v \right\rangle _{\partial_N\Omega }=\langle \tilde{f}+\gamma^{+*}\tau,v \rangle_\Omega
\quad \forall\ v\in {H}^{2-s}_{0} (\Omega,\partial_D\Omega ),
\end{eqnarray*}
also obtained from \eqref{ftot-H}, where it is taken into account that the trace $\gamma^+v$ belongs to $\s H^{s-\ha}(\partial_N\Omega )$ for $v\in {H}^{2-s}_{0} (\Omega,\partial_D\Omega )$ and $\gamma^{+*}: H^{s-\tha}(\partial_N\Omega)\to [{H}^{2-s}_{0} (\Omega,\partial_D\Omega )]^*$ is a continuous operator adjoint to the operator $\gamma^+$.

Thus the co-normal derivative does not enter, in fact,  the generalized weak settings of the Dirichlet, Neumann or mixed problem, implying that the non-uniqueness of  $T^+(\tilde{f},u)$ for a given function $u\in H^s(\Omega)$, $\ha<s<\tha$, does not influence the BVP weak settings, (cf. \cite[Section 2.2, item 4]{Agranovich2003RMS-H} for $s=1$).
On the other hand, for a given $u\in H^s(\Omega)$ the aggregate right hand sides $\check f$ and $\check f_m $ are uniquely determined by $u$ from \eqref{wN-H}, \eqref{wM-H}, as are, of course, $f$ and $\varphi_0$ by \eqref{wD-H},  \eqref{bD-H}/\eqref{wM2-H}.

Remark also that the formulation of the Neumann and mixed BVPs in terms of the aggregate right hand side can be also illustrated by a physical interpretation. For the Neumann problem, for example, if $A$ is a partial differential operator of the Lam\'e system of linear elasticity in a body $\Omega\subset\R^3$ for the displacement vector $u\in H^{1}(\Omega)$, then $\tilde f\in \tilde H^{-1}(\Omega)$ is the distributed volume force vector acting on the body and $T^+(\tilde{f},u)=\tau\in H^{-\ha}(\partial\Omega)$ is the prescribed traction vector on the boundary. Then $\tau\in H^{-\ha}(\partial\Omega)$ from the mechanical point of view is indistinguishable from the corresponding volume force $\gamma^{+*}\tau\in\tilde H^{-1}(\Omega)$  concentrated on the boundary surface and thus can be summed up with $\tilde f$ to produce the aggregate right hand side $\check{f}$.

\section{Canonical co-normal derivative for PDE systems with non-smooth coefficients} \label{S-CCD}
\subsection{Canonical operator extension and co-normal derivative}
As we have seen above, for an arbitrary $u\in H^s(\Omega)$, $\ha<s<\tha$, the co-normal derivative $T^+(\tilde{f},u)$ is generally  non-uniquely determined by $u$. An exception is $T^{+}(\check{A}u,u)\equiv 0$, which was in fact implemented in the revised weak  setting of the boundary value problems in Section~\ref{NewPVPs-H}.  But such zero co-normal derivative evidently differs from the strong co-normal derivative ${T}^{+}_c u$, given by \eqref{clCD-H} for sufficiently smooth $u$.
Another one way of making the generalized co-normal derivative unique for $u\in H^1(\Omega)$ was presented in \cite[Lemma 5.1.1]{Hsiao-Wendland2008-H} and is in fact associated with an extension of
$Au\in H^{-1}(\Omega)$ to $\tilde f\in\widetilde H^{-1}(\Omega)$, such that $\tilde f$ is orthogonal in $H^{-1}(\R^n)$ to $H^{-1}_\pO\subset H^{-1}(\R^n)$. However it appears (see \cite[Lemma A.1]{MikJMAA2011}), that even for infinitely smooth functions $f$ such extension $\tilde f$ does not generally belong to $L_2(\R^n)$, which implies that  the so-defined co-normal derivative operator from \cite[Lemma 5.1.1]{Hsiao-Wendland2008-H} is not a bounded extension of the strong co-normal derivative operator.

Nevertheless, we can  point out some subspaces of $H^s(\Omega)$, $\ha<s<\tha$, where a unique definition of the co-normal derivative by $u$ is still possible and leads to the strong co-normal derivative for sufficiently smooth $u$. Following \cite{MikJMAA2011}, we define below one such sufficiently wide subspace.

\begin{definition}\label{Hst-H}
Let $s\in\R$ and $A_*:H^s(\Omega)\to {\cal D}^*(\Omega)$ be a
linear operator. For $t\ge -\ha$, we introduce a space
 $
 H^{s,t} (\Omega;A_*):=\{g:\;g\in H^s (\Omega),\ A_*g|_\Omega=\tilde{f}_g|_\Omega,\
\tilde{f}_g\in
\s{H}^{t}(\Omega)\}
 $
equipped with the graphic norm,
 $ \|g\|_{ H^{s,t} (\Omega;A_*)}^2:= \|g\|_{ H^s (\Omega)}^2+ \|\tilde{f}_g\|_{\s{H}^{t}(\Omega)}^2
 $.
 \end{definition}

If $s_1\le s_2$ and $t_1\le t_2$, then we have the
embedding, $ H^{s_2,t_2} (\Omega;A_*)\subset H^{s_1,t_1}
(\Omega;A_*)$. Some other properties of the space $H^{s,t} (\Omega;A_*)$ studied  in \cite[Section 3.2]{MikJMAA2011} are provided in Appendix~\ref{S2.2-H}.

We will further use the space $H^{s,t} (\Omega;A_*)$ for the case when the operator $A_*$ is the operator $A$ from \eqref{Ldist-H} or the formally adjoint operator $A^*$ from \eqref{LH1*-H}.

\begin{definition}\label{Dce-H} Let $s\in\R$, $t\ge -\ha$. The operator $\tilde A$ mapping functions $u\in H^{s,t}(\Omega;A)$ to the extension of the distribution $Au\in {H}^t(\Omega)$ to $\s{H}^t(\Omega)$ will be called {\em the canonical extension} of the operator $A$.
\end{definition}

\begin{remark}
If $s\in\R$, $t\ge -\ha$, then  $\|\tilde Au\|_{\s{H}^t(\Omega)}\le \|u\|_{H^{s,t}(\Omega;A)}$ by the definition of the space $H^{s,t}(\Omega;A)$, i.e., the linear operator
 $
\tilde A: H^{s,t}(\Omega;A)\to \s{H}^t(\Omega)
 $
is continuous.
Moreover, if $\ -\ha<t<\ha$, then by Theorem~\ref{ExtOper-H} and uniqueness of the extension of ${H}^t(\Omega)$ to $\s{H}^t(\Omega)$, we have the representation $\tilde A:=\s E^tA$.
\end{remark}

\begin{remark}\label{NSD}Note that in the case of non-smooth coefficients of the operator $A$, the inclusion $u\in H^s(\Omega)$, $s>3/2$, does not generally imply that $u\in H^{s,t}(\Omega;A)$ for some $t\ge-\ha$, unlike the case of infinitely (or at least sufficiently) smooth coefficients. Particularly, even $u\in\D(\bar\Omega)$ does not generally belong to $H^{1,-\ha}(\Omega;A)$  unless   $a\in \bar C^\mu(\Omega)$ for some $\mu>1/2$ (and $b,c\in L_\infty(\Omega)$) by Theorem~\ref{GrL}, i.e., the usual assumption $a\in L_\infty(\Omega)$ is not generally sufficient for this.
\end{remark}

As in \cite[Definition 3]{MikhailovIMSE06-H} for scalar PDEs, let us define the {\em canonical} co-normal
derivative operator. This extends  \cite[Theorem 1.5.3.10]{Grisvard1985-H} and \cite[Lemma 3.2]{Costabel1988-H} where
co-normal derivative operators acting on functions from $H^{1,0}_p(\Omega;\Delta)$ and
$H^{1,0}(\Omega;A)$, respectively, were defined.

\begin{definition}\label{Dccd-H}
For $u\in H^{s,-\ha}(\Omega;A)$, $\{a,b,c\}\in {\cal C}_{+}^{s-1}(\overline\Omega)$, $\ha<s<\tha$,   we define the {\em canonical
co-normal derivative} as $T^+u:= T^+(\tilde A u,u)\in H^{s-\tha}(\pO)$, i.e.,
\bes
\label{Tcandef-H}
 \left\langle  T^+u\,,\, w\right\rangle _{\pO}
:= \check\E(u,\gamma_{-1} w)-\langle \tilde A u,\gamma_{-1} w \rangle_\Omega
=\langle\check{A}u- \tilde A u,\gamma_{-1} w \rangle_\Omega
  \quad  
\forall\ w\in H^{\tha-s} (\pO),
\ees
where $\gamma_{-1} : H^{s-\ha}(\pO)\to H^{s}(\Omega)$ is a bounded right inverse to the trace
operator.
\end{definition}
Thus, unlike the generalized co-normal derivative, the canonical
co-normal derivative is uniquely defined by the function $u$ and
the operator $A$ only, uniquely fixing an extension of the latter on
the boundary, and is linear in $u$.

Theorem \ref{GCDl-H} for the generalized co-normal derivative and
Definition \ref{Hst-H} imply the following assertion.
\begin{theorem}\label{GCDlc-H}
Under hypotheses of Definition \ref{Dccd-H},  the canonical
co-normal derivative $T^+u$ is independent of the
operator $\gamma_{-1}$, the operator $T^+ : H^{s,-\ha}(\Omega;A)\to
H^{s-\tha}(\pO)$ is continuous, and the first Green's identity holds
in the following form,
\begin{eqnarray*} \label{Tcan-H}
 \left\langle T^{+}u\,,\, \gamma^+v\right\rangle _{_\pO}
 =\left\langle T^+(\tilde A u,u)\,,\, \gamma^+v\right\rangle _{_\pO}
 =\check\E(u,v)-\langle \tilde A u,v \rangle_\Omega
 =\langle \check{A}u-\tilde A u,v \rangle_\Omega
  \quad 
\forall\ v\in H^{2-s} (\Omega) .
\end{eqnarray*}
\end{theorem}

Definitions \ref{GCDd-H} and \ref{Dccd-H} imply that the generalized
co-normal derivative of $u\in H^{s,-\ha}(\Omega;A)$, $\ha<s<\tha$,
for any other extension $\tilde{f}\in \s{H}^{s-2}(\Omega)$ of the
distribution $Au|_\Omega\in {H}^{-\ha}(\Omega)$ can be expressed as
\begin{equation*}
\label{Tgentil-H} \left\langle
 T^+(\tilde{f},u)\,,\, w
\right\rangle _{_\pO}=\left\langle T^{+}u\,,\, w \right\rangle
_{_\pO}+\langle \tilde A u - \tilde{f},\gamma_{-1} w \rangle_\Omega
  \quad  
\forall\ w\in H^{\tha-s} (\pO) .
\end{equation*}

Note that the distributions $\check{A}u-\tilde{f}$, $\check{A}u-\tilde A u$
and $\tilde A-\tilde{f}$ belong to ${H}^{2-s}_{\pO}$ since $\tilde Au$,
$\check{A}u$, $\tilde{f}$ belong to $\s{H}^{2-s}(\Omega)$, while
$\tilde Au|_\Omega=\check{A}u|_\Omega=\tilde{f}|_\Omega=Au|_\Omega\in
{H}^{s-2}(\Omega)$.

Since by Theorem~\ref{GCDlc-H} the canonical co-normal derivative does not depend on the  extension operator $\gamma_{-1}$, the latter can be always chosen such that $\gamma_{-1}w$ has a support only near the boundary, which means that the co-normal derivative $T^+u$ is determined by the behaviour of $u$ near the boundary. We can formalize this in the following statement.
\begin{theorem}\label{GCDlc1-H}
Let $\Omega$ and $\Omega'\subset\Omega$ be interior or exterior open Lipschitz domains, $\pO\subset\pO'$, $u\in H^{s,-\ha}(\Omega;A)$, $u\in H^{s,-\ha}(\Omega';A)$, $\{a,b,c\}\in {\cal C}_{+}^{s-1}(\overline\Omega)$,  $\ha<s<\tha$, while $T^+u$ and $T^{\prime+}u$ be the canonical
co-normal derivatives on $\pO$ and $\pO'$ respectively. Then $T^+u=r_{_\pO}T^{\prime+}u$.
\end{theorem}
\begin{proof}
The proof is word-for-word the proof of the counterpart for infinitely smooth coefficients, Theorem~3.10 in \cite{{MikJMAA2011}}
\end{proof}

Theorem~\ref{GCDlc1-H} can be considered as an alternative definition of the canonical co-normal derivative on $\partial\Omega$, where the domain $\Omega'$ can be chosen arbitrarily small, and particularly can be taken interior when $\Omega$ is exterior (with compact boundary). Note that a similar reasoning holds also for the generalized co-normal derivative.

If  $\ha<s<\tha$, $\{a,b,c\}\in {\cal C}_{+}^{s-1}(\overline\Omega)$ and $v \in H^{2-s,-\ha}(\Omega;A^*)$, then similar to Definitions \ref{Dce-H} and \ref{Dccd-H} we can introduce the {\em canonical extension} $\s A^*$ of the operator $A^{*}$, and the {\em canonical modified
co-normal derivative} $ T^+_*v:=  T^+_*(\s A^* v,v)\in H^{\ha-s}(\pO)$, i.e.,
$$
\left\langle
  T^+_*v\,,\, w
\right\rangle _{\pO}:=
\check\E^*(v,\gamma_{-1} w)-\langle \s A^*v,\gamma_{-1} w \rangle_\Omega
  \quad  \forall\ w\in H^{s-\ha} (\pO).
$$
Then the first Green's identity \eqref{Tgen*bar-H} becomes,
\begin{equation*}\label{Tcon*bar-H}
\left\langle
\gamma^{+}u,\ov{ T^+_* v}
\right\rangle _{\pO}
=\check\E(u,\bar v)-\langle u, \ov{\s A^*v} \rangle_\Omega
  \quad 
\forall\ u\in H^{s}(\Omega).
\end{equation*}
For $v \in H^{2-s,-\ha}(\Omega;A^*)$ and $u\in H^s(\Omega)$, $Au=\tilde{f}|_{_\Omega}$ in $\Omega$, where
$\tilde{f}\in \s{H}^{s-2}(\Omega)$,  the second Green's identity \eqref{2.5sg*-H} takes form,
\begin{equation} \label{2.5s*-H}
\langle \tilde{f},\bar v \rangle_\Omega\
 -\left\langle u,\ov{\s A^*v}\right\rangle _{_\Omega}=
 \left\langle \gamma^{+}u,\ov{ T^+_*v}\right\rangle _{_\pO}
 - \left\langle T^+(\tilde{f},u),\ov{\gamma^{+}v}\right\rangle _{_\pO}.
\end{equation}
 This form was a starting point in formulation and analysis of the extended boundary-domain integral equations in \cite{Liverpool2005UKBIM-H}.

If, moreover, $u \in H^{s,-\ha}(\Omega;A)$,  we obtain from
\eqref{2.5s*-H} the second Green's identity for the canonical extensions and
canonical co-normal derivatives,
\begin{equation} \label{GreenCan*-H}
\left\langle \tilde Au,\bar v\right\rangle _{_\Omega}
-\left\langle u,\ov{\s A^*v}\right\rangle _{_\Omega}=
\left\langle\gamma^{+}u ,\ov{ T^+_*v}\,\right\rangle _{_\pO}
-\left\langle T^{+}u\,,\, \ov{\gamma^{+}v}\right\rangle _{_\pO}
 .
\end{equation}
Particularly, if $u \in H^{1,0}(\Omega;A)$, $v \in H^{1,0}(\Omega;A^*)$, with $a,b,c\in L_\infty(\Omega)$, then \eqref{GreenCan*-H} takes the familiar form, cf. \cite[Lemma 3.4]{Costabel1988-H},
 $$
\int_{\Omega}[\,\ov{v(x)}Au(x)- u(x)\ov{A^*v(x)}\,]dx=
\left\langle \gamma^{+}u,\ov{ T^+_*v}\,\right\rangle _{_\pO}-
 \left\langle T^{+}u\,,\, \ov{\gamma^{+}v}\right\rangle _{_\pO}
.
 $$

\subsection{Classical verses canonical co-normal derivatives}\label{S3}

In this section we generalize to the case when the PDE coefficients are not infinitely smooth, the results of \cite{MikJMAA2011} on conditions when the canonical co-normal derivative $T^+ u$ coincides with the strong co-normal derivative $T^+_c u$, if the latter does exist in the trace sense.
To do this, we will need higher smoothness of the coefficients than necessary for continuity of the PDEs in Theorems \ref{T4.4} and \ref{T4.4c}. First of all, we make the following observation, c.f. Remark~\ref{NSD}.
\begin{remark}\label{NSD2}
Theorem~\ref{GrL} and Definition~\ref{DC^sigma} imply that if $\{a,b,c\}\in{\cal C}_{+}^{t+1}(\overline\Omega)$, $t\ge-\ha$, then $\D(\ov \Omega)\subset H^{s,t}(\Omega;A)$ (and moreover, $\D(\ov \Omega)\subset H^{s,t+\epsilon}(\Omega;A)$ for some $\epsilon\in\R_+(t)$) for any $s\in\R$.
\end{remark}
Now we are in the position to generalize the density theorem from \cite[Theorem 3.12]{MikJMAA2011} to non-smooth coefficients and exterior domains.

\begin{theorem}\label{densL-H}
Let $\Omega$ be an interior or exterior Lipschitz domain and $s\in\R$,  $-\ha\le t<\ha$. Let $\{a,b,c\}\in{\cal C}_{+}^{s-1}(\overline\Omega)\bigcap{\cal C}_{+}^{t+1}(\overline\Omega)$,  the operator $A$ be elliptic (in the sense of Petrovsky) on $\ov{\Omega}$ and, if $\Omega$ is exterior, there exists a finite $a(\infty):=\lim_{x\to\infty}a(x)$, which also satisfies the ellipticity condition. Then
$\D(\ov{\Omega})$ is dense in $H^{s,t}(\Omega;A)$.
\end{theorem}
\begin{proof}
We adopt here for the non-smooth coefficients and exterior domains the proof from \cite[Theorem 3.12]{MikJMAA2011}.

For every continuous linear functional $l$ on $H^{s,t}(\Omega;A)$
there exist distributions $\tilde{h}\in \s{H}^{-s}(\Omega)$ and
$g\in H^{-t}(\Omega)$ such that
 $
l(u)=\langle \tilde{h},u\rangle_\Omega +\langle g,
\tilde Au\rangle_\Omega\quad \forall\ u\in
H^{s,t}(\Omega;A).
 $

Remark~\ref{NSD2} and the theorem hypothesis on the coefficients imply that $\D(\ov{\Omega})\subset H^{s,t}(\Omega;A)$.
To prove the lemma claim, it suffices to show that any $l$, which
vanishes on $\D(\ov{\Omega})$, will vanish on any $u\in
H^{s,t}(\Omega;A)$.

If $l(\phi)=0$ for any
$\phi\in\D(\ov{\Omega})$, then
 \begin{equation}\label{j=0-H}
\langle \tilde{h},\phi\rangle_\Omega +\langle g,\tilde A\phi\rangle_\Omega=0.
 \end{equation}

Let us consider the case $-\ha<
t<\ha$ first and extend $g$ outside ${\Omega}$ to
$\tilde{g}=\s E^{-t}g\in \s{H}^{-t}(\Omega)$, cf. Theorem~\ref{ExtOper-H}.
Let $\Omega'\supset \ov{\Omega}$ be some domain, where the operator $A$ is still  elliptic. Such domain exists since the coefficients $a(x)$ are continuous and and the ellipticity condition holds in the closed domain $\ov{\Omega}$.
Then equation \eqref{j=0-H} gives
 \begin{multline}\label{j=0Rn}
\langle \tilde{h},\phi\rangle_{\Omega'} +\langle \tilde{g},A\phi\rangle_{\Omega'}=
\langle \tilde{h},\phi\rangle_{\Omega} +\langle \tilde{g},A\phi\rangle_{\Omega}=
\langle \tilde{h},\phi\rangle_{\Omega} +\langle\s E^{-t}g,A\phi\rangle_{\Omega}\\
=\langle \tilde{h},\phi\rangle_{\Omega} +\langle {g},\s E^tA\phi\rangle_{\Omega}=
\langle \tilde{h},\phi\rangle_{\Omega} +\langle {g},\tilde A\phi\rangle_{\Omega}=0
 \end{multline}
for any $\phi\in\D(\Omega')$.  This means
 \be\label{A*g-H} A^*\tilde{g}=-\tilde{h} \quad \mbox{in } \Omega'\ee
in the sense of distributions, where $A^*$ is the operator formally adjoint to $A$.
If $t\le s-2$, then evidently $\tilde{g}\in \s{H}^{2-s}(\Omega)$. If $t> s-2$, then \eqref{A*g-H} and the local regularity Theorem~\ref{RegThInf}(b) implies
$\tilde{g}\in H^{2-s}(\Omega'')$ for any $\Omega''$ such that $\Omega\Subset\Omega''\Subset\Omega'$ and consequently
$\tilde{g}\in \s{H}^{2-s}(\Omega)$.

In the case $t=-\ha$,
one can extend $g\in H^\ha(\Omega)$ outside $\ov{\Omega}$ by zero to $\tilde{g}\in
\s{H}^{\ha-\epsilon}(\Omega)$, $0<\epsilon$, and prove as
in the previous paragraph that $\tilde{g}\in \s{H}^{2-s}(\Omega)$.

If $-\ha<t<\ha$ or [$t=-\ha$, $s\le \tha$]
then for any $u\in H^{s,t}(\Omega;A)$, we have,
 \bes
l(u)=\langle -A^*\tilde g,u\rangle_\Omega +\langle g ,\tilde Au\rangle_\Omega =-\langle\tilde g,Au\rangle_\Omega
+\langle\tilde g,Au\rangle_\Omega =0.
 \ees
Thus $l$ is identically zero.

On the other hand, if $t=-\ha$, $s> \tha$, let $\{\tilde g_k\}\in\D(\Omega)$ be a sequence converging, as $k\to\infty$, to $g$
in $H^{\ha}_0(\Omega)=H^{\ha}(\Omega)$, cf. Theorem~\ref{H0=H-H}, and thus to $\tilde g$ in $\s{H}^{2-s}(\Omega)$. Then for any $u\in H^{s,\ha}(\Omega;A)$, we have,
 \begin{multline*}
l(u)=\langle -A^*\tilde g,u\rangle_\Omega +\langle g ,\tilde Au\rangle_\Omega =\lim_{k\to\infty}\left\{\langle -A^*\tilde g_k,u\rangle_\Omega
+\langle \tilde g_k,\tilde Au\rangle_\Omega\right\}
=\lim_{k\to\infty}\left\{-\langle
\tilde g_k,Au\rangle_\Omega +\langle \tilde g_k, Au\rangle_\Omega\right\}=0,
 \end{multline*}
which completes the proof.
\end{proof}
 
Let us prove an analogue of Lemma~3.13 from \cite{MikJMAA2011}.
\begin{lemma}\label{CCCDLDH}
Let $\Omega$ be a Lipschitz domain, $\ha<s<\tha$,
$\{a,b,c\}\in{\cal C}_{+}^{\ha}(\overline\Omega)$,
$u\in H^{s,-\ha}(\Omega;A)$, and $\{u_k\}\in \D(\ov{\Omega})$
be a sequence such that
\be\label{u-uk-H}
 \|u_k-u\|_{H^{s,-\ha}(\Omega;A)}\to 0\quad \text{as}\
k\to \infty.
 \ee
Then
 $\|T^+_c u_k-T^+ u\|_{H^{s-\tha}(\pO)}\to 0$ as $k\to \infty$.
\end{lemma}
\begin{proof}
By the lemma hypothesis on the coefficients, there exists $\epsilon\in(0,s-\ha)$ such that $\sum_{j=1}^n a_{ij}\partial_j u_k\in H^{\ha+\epsilon}(\Omega)$. Then for any $\gamma_{-1}w\in H^{2-s}(\Omega)$ and $u_k\in \D(\ov{\Omega})$ there exist sequences $\{W_p\}_{p=1}^\infty, \{U_{qi}\}_{q=1}^\infty\in\D(\ov\Omega)$ such that
$$\lim_{p\to\infty}\|\gamma_{-1} w-W_p\|_{H^{2-s}(\Omega)}=0,\quad
\lim_{q\to\infty}\|\sum_{j=1}^n a_{ij}\partial_ju_k-U_{qi}\|_{H^{\ha+\epsilon}(\Omega)}=0$$
and we have
\begin{multline*}
 \check\E(u_k,\gamma_{-1}w)
 -\sum\limits_{j=1}^{n} \left\langle \s{E}^{s_b(s)}(b_{j}\pa_j u_k),\gamma_{-1}w\right\rangle_{\Omega}
 -\left\langle \s{E}^{s_c(s)}(c u_k),  \gamma_{-1}w\right\rangle
 =\sum\limits_{i,j=1}^{n} \left\langle \s{E}^{s-1}(a_{ij}\pa_j u_k) , \pa_i\gamma_{-1}w\right\rangle_{\Omega}
 \\
 =\lim_{p,q\to\infty}\sum\limits_{i=1}^{n} \left\langle \s{E}^{s-1}U_{qi} , \pa_iW_p\right\rangle_{\Omega}
 =\lim_{p,q\to\infty}\sum\limits_{i=1}^{n}\left\{\int_\pO U_{qi}\nu_iW_p\ d\Gamma-
\int_\Omega(\pa_i U_{qi})W_p\ d\Omega\right\} \\
=\sum\limits_{i,j=1}^{n}\left\{\int_\pO( a_{ij}\partial_ju_k)\nu_i w\ d\Gamma-
\left\langle\s{E}^{-\ha+\epsilon}\pa_i (a_{ij}\partial_ju_k),\gamma_{-1} w\right\rangle_\Omega\right\}
 \\
= \left\langle T^+_c u_k, w\right\rangle_\pO
+ \langle \tilde A u_k,\gamma_{-1}w \rangle_\Omega
-\sum\limits_{j=1}^{n} \left\langle \s{E}^{s_b(s)}(b_{j}\pa_j u_k),\gamma_{-1}w\right\rangle_{\Omega}
 -\left\langle \s{E}^{s_c(s)}(c u_k),  \gamma_{-1}w\right\rangle,
\end{multline*}
that is, the first Green's identity holds for the classical co-normal derivative,\\
$
 \E(u_k,\gamma_{-1}w)=\left\langle T^+_c u_k, w\right\rangle_\pO
+ \langle \tilde A u_k,\gamma_{-1}w \rangle_\Omega.
$
Thus we have for any $w\in
H^{\tha-s}(\pO)$,
\begin{multline*}
\left|\left\langle T^{+} u -T^+_c u_k, w\right\rangle _{_\pO}\right|
 =\left|\check\E(u-u_k,\gamma_{-1} w)- \langle \tilde A (u-u_k), \gamma_{-1} w\rangle_\Omega\right|\le
 C\|u-u_k\|_{H^{s,-\ha}(\Omega;A)}\|w\|_{H^{\tha-s}(\pO)},
  \end{multline*}
which implies
$\|T^+_c u_k-T^+ u\|_{H^{s-\tha}(\pO)}\le C\|u-u_k\|_{H^{s,-\ha}(\Omega;A)}\to 0\quad \mbox{as}\quad k\to \infty.$
\end{proof}
Note that a sequence satisfying \eqref{u-uk-H} does always exist for Lipschitz domains by Theorem~\ref{densL-H} since  Definition~\ref{DC^sigma} implies ${\cal C}_{+}^{\ha}(\overline\Omega)\subset{\cal C}_{+}^{s-1}(\overline\Omega)$  if $\ha<s<\tha$.

The following statement gives the equivalence of the classical co-normal derivative (in the trace
sense) and the canonical co-normal derivative, for functions from
$H^s(\Omega)$, $s>\tha$.

\begin{corollary}\label{CCCDHs}
If $\Omega$ is an interior or exterior Lipschitz domain, $\{a,b,c\}\in{\cal C}_{+}^{\ha}(\overline\Omega)$  and $u\in
H^s(\Omega)$, $s>\tha$, then $T^+ u=T^+_c u$.
\end{corollary}
\begin{proof}
The proof coincides with the proof of \cite[Corollary~3.14]{MikJMAA2011} if we remark that $\gamma^+[\partial_j u]\in H^{s-\tha}(\pO)$ for $\tha<s<\frac{5}{2}$ and the condition $\{a,b,c\}\in{\cal C}_{+}^{\ha}(\overline\Omega)$ implies that  $T^+_c u\in L_2(\pO)$ and  $u\in  H^{s,
-\ha}(\Omega;A)\subset H^{1,-\ha}(\Omega;A)$.
\end{proof}

Similar to \cite[page 85]{Necas1967-H} we introduce the following definition.
\begin{definition}\label{DOktoO-H}
Let $\Omega_k$, $\Omega$ be Lipschitz domains. We say that $\Omega_k\to \Omega$ as $k\to\infty$ if $\pO_k$ are represented using the same system of covering charts $\omega_j$ as $\pO$ for all sufficiently large $k$, and
\\
$
\lim_{k\to\infty}|\zeta_{jk}-\zeta_j|_{C^{0,1}(\bar\omega_j)}=0,
$
where $\zeta_{jk}$ and $\zeta_j$ are the corresponding Lipschitz functions for the boundary representation.
\end{definition}

\begin{lemma}\label{T+=limT+cHs}
Let $\Omega$ and $\Omega_k\Subset\Omega$ be Lipschitz domains such that $\Omega_k\to\Omega$ as $k\to \infty$ (cf. Definition~\ref{DOktoO-H}). If  $u\in H^{s,t}(\Omega;A)$  for some $s\in (\ha,\tha)$ and $t\in(-\ha,\ha)$  and $\{a,b,c,\}\in  {\cal C}_{+}^{\ha}(\overline\Omega)$, then
$\langle T^+ u,\gamma^{+}v \rangle_\pO =\lim_{k\to\infty}\langle T^+_c u,\gamma^{+}v \rangle_{\pO_k}$ for any $v\in H^{2-s}(\Omega)$.
\end{lemma}
\begin{proof}
By Theorem~\ref{GCDlc1-H} it suffices to consider only an interior domain $\Omega$.
Let $\Omega'_k:=\Omega\setminus\ov{\Omega_k}$ be the layer between $\pO$ and $\partial\Omega_k$.  By the solution regularity Theorem~\ref{RegTh}, $u\in H^{t'+2}(\Omega_k)$  for some $t'>-\ha$.  On $\pO_k$ then
$T^+ u=T^+_c u\in L_2(\pO_k)$  by Corollary~\ref{CCCDHs}. Then
\begin{multline}\label{3.28a-H}
 \langle T^+ u,\gamma^{+}v \rangle_\pO - \langle T^+_c u,\gamma^{+}v \rangle_{\pO_k}=
 \langle T^+ u,\gamma^{+}v \rangle_\pO - \langle T^+ u,\gamma^{+}v\rangle_{\pO_k}=
 \langle T^+ u,\gamma^{+}v \rangle_{\pO'_k}=\\
  \check\E_{\Omega'_k}(u,v)- \langle \tilde A_{\Omega'_k} u,v \rangle_{\Omega'_k}
  =\check\E_{\Omega'_k}(u,v)- \langle A u,\tilde v_{\Omega'_k} \rangle_{\Omega'_k},
 \end{multline}
where
$\tilde A_{\Omega'_k} u=\s E_{\Omega'_k}^t r_{\Omega'_k}A u\in \s H^t(\Omega'_k)$
and
$\tilde v_{\Omega'_k}=\s E_{\Omega'_k}^{-t}r_{\Omega'_k}v\in\s H^{-t}(\Omega'_k)$
are the unique extensions of $r_{\Omega'_k}A u\in H^t(\Omega'_k)$ and $r_{\Omega'_k}v\in H^{2-s}(\Omega'_k)\subset H^{-t}(\Omega'_k)$, respectively.

By \eqref{Echeckdef-H} and Theorem~\ref{ExtOper-H} we have for the first term on the right hand side of \eqref{3.28a-H}, for $\ha<s\le 1$ and any $\epsilon\in\R_+(s)$,
 \begin{multline}
 |\check\E_{\Omega'_k}(u,v)|\le{C\sum\limits_{i,j=1}^{n}\|a_{ij}\|_{\bar C^{|s-1|+\epsilon}(\bar\Omega)} \|\pa_j u\|_{H^{s-1}(\Omega'_k)} \|\pa_i v\|_{H^{1-s}(\Omega)}\
 +}\\
  C\sum\limits_{j=1}^{n} \|b_{j}\|_{\bar C^{|s-1|+\epsilon}(\bar\Omega)}\|\pa_j u \|_{H^{s-1}(\Omega'_k)}  \|v\|_{H^{1-s}(\Omega)}
 + C\|c\|_{L_\infty(\Omega)} \| u \|_{H^{0}(\Omega'_k)}\|v\|_{H^{0}(\Omega)}\\
 \le
 \{C_1\|\nabla u \|_{H^{s-1}(\Omega'_k)} + C_2\| \nabla u \|_{H^{s-1}(\Omega'_k)}\} \|v\|_{H^{2-s}(\Omega)}
 + C_3\| u \|_{H^{0}(\Omega'_k)}\|v\|_{H^{0}(\Omega)}
 \to 0,\quad k\to\infty \label{Eest1}
\end{multline}
 by Lemma~\ref{Hsto1-H} since the Lebesgue measure of $\Omega'_k$ tends to zero.
For $1<s< \tha$ similarly,
 \begin{multline}
 |\check\E_{\Omega'_k}(u,v)|\le C\sum\limits_{i,j=1}^{n}\|a_{ij}\|_{\bar C^{|s-1|+\epsilon}(\bar\Omega)} \|\pa_j u\|_{H^{s-1}(\Omega)} \|\pa_i v\|_{H^{1-s}(\Omega'_k)}\
 +\\
 C\sum\limits_{j=1}^{n} \|b_{j}\|_{L_\infty(\Omega)}\|\pa_j u \|_{H^{s-1}(\Omega)}  \|v\|_{H^{1-s}(\Omega'_k)}
 + C\|c\|_{L_\infty(\Omega)} \| u \|_{H^{s-1}(\Omega)}\|v\|_{H^{0}(\Omega'_k)}\\
 \le
 \{C_4\|\nabla v \|_{H^{1-s}(\Omega'_k)} + C_5\|v \|_{H^{0}(\Omega'_k)}\} \|u\|_{H^{s}(\Omega)}
+C_6 \| u \|_{H^{s-1}(\Omega)}\|v\|_{H^{0}(\Omega'_k)}
 \to 0,\quad k\to\infty. \label{Eest2}
\end{multline}
The norms of the coefficients $a,b,c$ in \eqref{Eest1} and \eqref{Eest2} are bounded due to the lemma hypothesis.

For the last term in \eqref{3.28a-H} we have by Lemmas \ref{Hsto3-H} and \ref{Hsto1-H},
 \begin{multline*}
|\langle A u,\tilde v_{\Omega'_k} \rangle_{\Omega'_k}|\le
\|A u\|_{H^t(\Omega'_k)}\|\tilde v_{\Omega'_k}\|_{\s H^{-t}(\Omega'_k)}\le
C\|A u\|_{H^t(\Omega'_k)}\|v\|_{ H^{-t}(\Omega)}\le
C\|A u\|_{H^t(\Omega'_k)}\|v\|_{ H^{2-s}(\Omega)}\to 0,\quad k\to\infty,\qquad
 \end{multline*}
if $-\ha<t\le 0$. On the other hand, if $0<t<\ha$, then again by Lemmas \ref{Hsto3-H} and \ref{Hsto1-H},
 \begin{multline*}
|\langle A u,\tilde v_{\Omega'_k} \rangle_{\Omega'_k}|=
|\langle \tilde A_{\Omega'_k} u, v\rangle_{\Omega'_k}|
\le
\|\tilde A_{\Omega'_k} u\|_{\s H^t(\Omega'_k)}\|v\|_{H^{-t}(\Omega'_k)}\le
C\|A u\|_{H^t(\Omega)}\|v\|_{ H^{-t}(\Omega'_k)}\to 0,\quad k\to\infty.
 \end{multline*}
\end{proof}

Lemma~\ref{T+=limT+cHs} allows to show that the classical and canonical co-normal derivatives coincide also in another case (apart from the one in Corollary~\ref{CCCDHs}). First note that $C^1(\ov{\Omega})\subset H^{1}(\Omega)$ for any interior domain $\Omega$ and $C^1(\ov{\Omega'})\subset H^{1}(\Omega')$ for any interior subdomain $\Omega'$ of exterior domain $\Omega$, but  $C^1(\ov{\Omega})$ is not a subset of $H^{1,-\ha}(\Omega;A)$. For $u\in C^1(\ov{\Omega})$, evidently, $\lim_{k\to\infty}\langle T^+_c u,\gamma^{+}v \rangle_{\pO_k}=\langle T^+_c u,\gamma^{+}v \rangle_\pO$ for any $v\in H^{2-s}(\Omega^+)$ if $\Omega_k\to\Omega$ as $k\to \infty$, $\ov{\Omega}_k\subset\Omega$. Then taking into account that ${\cal C}_{+}^{s-1}(\overline\Omega) \subset{\cal C}_{+}^{\ha}(\overline\Omega)$ for $s=1$, Lemma~\ref{T+=limT+cHs} immediately implies the following assertion.

\begin{corollary}\label{CCCDHsgH}
If $\Omega$ is a Lipschitz domain, $\{a,b,c\}\in{\cal C}_{+}^{\ha}(\overline\Omega)$  and  $u\in C^1(\ov{\Omega})\bigcap H^{1,t}_{\mathrm{loc}}(\overline\Omega;A)$ for some $t\in(-\ha,\ha)$, then $T^+ u=T^+_c u$.
\end{corollary}

\section*{APPENDICES}

\section{Some estimates \label{Est}}
We will prove here some estimates used in Step (i) of the proof of Theorem~\ref{RegThInf}.
Let $0\le \mu\le 1$ and
$$
|a_\infty^-|_{C^\mu(B_\rho)}:=\sup_{x',x''\in B_\rho \atop x'\not=x''} \frac{|a_\infty^-(x'')-a_\infty^-(x')|}{|x''-x'|^\mu}\ .
$$
Let $x',x''\in B_\rho$ and $|x''|\ge|x'|$ for definiteness. Then
$$
\frac{|a_\infty^-(x'')-a_\infty^-(x')|}{|x''-x'|^\mu}=
\frac{\left|\frac{|x''|}{\rho}a^-\left(\frac{x''\rho}{|x''|}\right)
-\frac{|x'|}{\rho}a^-\left(\frac{x'\rho}{|x'|}\right)\right|}{|x''-x'|^\mu}\le A+B,
$$
where
$$
A:=\frac{|x''|}{\rho}\frac{\left|a^-\left(\frac{x''\rho}{|x''|}\right)
-a^-\left(\frac{x'\rho}{|x'|}\right)\right|}{|x''-x'|^\mu},\quad
B:=\frac{\left|\ |x''|-|x'|\ \right|}{\rho|x''-x'|^\mu}
\left| a^-\left(\frac{x'\rho}{|x'|}\right)\right|.
$$
The term $A$ can be expressed as
$$
A=\left(\frac{|x''|}{\rho}\right)^{1-\mu}
\frac{\left|a^-\left(\frac{x''\rho}{|x''|}\right)
-a^-\left(\frac{x'\rho}{|x'|}\right)\right|}{|\tilde\Delta|^\mu},
\quad
\tilde\Delta:=\frac{x''\rho}{|x''|}-\frac{x'\rho}{|x'|}\frac{|x'|}{|x''|}\ .
$$
Let $\Delta:=\frac{x''\rho}{|x''|}-\frac{x'\rho}{|x'|}$. Then $|\tilde\Delta|\ge\rho\ge{|\Delta|}/{2}$ if $x'\cdot x''\le 0$, while
$|\tilde\Delta|\ge|\Delta|\ |\sin(\widehat{x',\Delta})|
\ge|\Delta|\sin(\widehat{\pi/4})=|\Delta|/\sqrt2$ if $x'\cdot x''> 0$.
 Thus in the both cases,
$$
A\le 2\left(\frac{|x''|}{\rho}\right)^{1-\mu}
\frac{\left|a^-\left(\frac{x''\rho}{|x''|}\right)
-a^-\left(\frac{x'\rho}{|x'|}\right)\right|}{|\Delta|^\mu}\le 2|a^-|_{C^\mu(\partial B_\rho)}
\le 2|a^-|_{C^\mu(\R^n\setminus B_\rho)}\quad
\mbox{
if $\mu\in[0,1]$.}
$$
On the other hand,
$$
B\le\frac{\big(\ |x''|-|x'|\ \big)^{1-\mu}}{\rho}
\| a^-\|_{C(\partial B_\rho)}\le  \| a^-\|_{C(\R^n\setminus B_\rho)}
$$
for $\mu\in[0,1]$ and $\rho\ge 1$. This implies
$
\|a_\infty^-\|_{C^\mu(B_\rho)}\le 2\|a^-\|_{C^\mu(\R^n\setminus B_\rho)}
$
and considering also the case \mbox{$x'\in\R^n\setminus B_\rho$}, $x''\in B_\rho$ and the case $x',x''\in\R^n\setminus B_\rho$, we arrive at the desired estimate
$
\|a_\infty^-\|_{C^\mu(\R^n)}\le 3\|a^-\|_{C^\mu(\R^n\setminus B_\rho)}.
$

If $a^-\in C^{\mu_1}(\R^n)$  for some $\mu_1$ such that $0\le\mu<\mu_1\le 1$, then
\begin{multline*}
\frac{1}{3}\|a_\infty^-\|_{ C^{\mu}(\R^n)}\le \|a^-\|_{ C^{\mu}(\R^n\backslash B_{\rho})}
\le \|a^-\|_{ C(\R^n\backslash B_{\rho})}+|a^-|_{ C^{\mu}(\R^n\backslash B_{\rho})}\\
=\|a^-\|_{ C(\R^n\backslash B_{\rho})}
+ \sup_{|x'-x''|\le r,\  x'\not=x''\atop x',x''\in \R^n\backslash B_{\rho}} \frac{|a^-(x'')-a^-(x')|}{|x''-x'|^\mu}
+\sup_{|x'-x''|> r\atop x',x''\in \R^n\backslash B_{\rho}} \frac{|a^-(x'')-a^-(x')|}{|x''-x'|^\mu}\\
\le \|a^-\|_{ C(\R^n\backslash B_{\rho})}
+r^{\mu_1-\mu} \sup_{|x'-x''|\le r,\  x'\not=x''\atop x',x''\in \R^n\backslash B_{\rho}} \frac{|a^-(x'')-a^-(x')|}{|x''-x'|^{\mu_1}}
+2r^{-\mu}\sup_{ x\in \R^n\backslash B_{\rho}} |a^-(x)|\\
\le (1+2r^{-\mu})\|a^-\|_{ C(\R^n\backslash B_{\rho})}+r^{\mu_1-\mu}\|a^-\|_{ C^{\mu_1}(\R^n)}\ .
\end{multline*}
Thus for any $\varepsilon>0$ we can chose sufficiently small $r>0$ so that the last term on the right hand side is less than $\varepsilon/2$ and then chose $\rho$ sufficiently large so that the first term on the right hand side is less than $\varepsilon/2$
since $a^-(x)\to 0$ as $x\to\infty$. This means $\|a_\infty^-\|_{ C^{\mu}(\R^n)}\to 0$ as $\rho\to\infty$.

\section{On Sobolev spaces characterization, traces and extensions \label{S2.2-H}}
To make this paper more self-contained we provide here some assertions from \cite{MikJMAA2011} about Sobolev spaces characterization, traces and extensions.

\begin{theorem}\label{H_F=0-H}\cite[Theorem 2.10]{MikJMAA2011}
Let $\Omega$ be a Lipschitz domain  in $\R^n$.

(i)  If $ t\ge -\ha$, then $H^t_{\pO}=\{0\}$.

(ii) If $-\tha< t< -\ha$, then $g\in H^t_{\pO}$ if and only if $g=\gamma^*v$, i.e.,
$
  \langle g, W \rangle_{\R^n}=\langle v, \gamma W\rangle_\pO\quad \forall\
  W\in H^{-t}(\R^n),
$
with $v=\gamma_{-1}^*g\in H^{t+\ha}(\pO)$, i.e.,
$
 \langle v, w\rangle_\pO = \langle g, \gamma_{-1}w \rangle_{\R^n}\quad\forall\
  w\in H^{-t-\ha}(\pO),
$
where $v$ is independent of the choice of the non-unique operators $\gamma_{-1}$, $\gamma_{-1}^*$, and the estimate
$\|v\|_{H^{t+\ha}(\pO)}\le C \|g\|_{H^t(\R^n)}$ holds with $C$
independent of $t$.
\end{theorem}
\begin{theorem}
\label{H0=H-H} \cite[Theorem 2.12]{MikJMAA2011}
Let $\Omega$ be a Lipschitz domain  in $\R^n$ and $s\le\ha$. Then
$\D(\Omega)$ is dense in $H^s(\Omega)$, i.e.,
$H^s(\Omega)=H_0^s(\Omega)$.
\end{theorem}
\begin{theorem}\label{ExtOper-H}\cite[Theorem~2.16]{MikJMAA2011}
Let $\Omega$ be a Lipschitz domain and  $-\tha< s<\ha$, $s\not=-\ha$. There exists a bounded linear extension operator
$\s{E}^s : H^s(\Omega) \to \s{H}^s(\Omega)$, such that
$\s{E}^sg|_\Omega=g$, $\forall\ g\in H^s(\Omega)$. For $-\ha< s<\ha$ the extension operator is unique,  $(\s E^s)^*=\s E^{-s}$ and
 $
 \|\s E^s g\|_{\s H^{s}(\Omega)}\le C \|g\|_{H^{s}(\Omega)},
 $
where $C$ depends only on $s$ and on the maximum of the Lipschitz constants of the representation functions $\zeta_j$ for the boundary $\pO$, see Definition~\ref{DOktoO-H}.
\end{theorem}

\begin{lemma}\label{Hsto1-H}\cite[Lemma~2.17]{MikJMAA2011}
Let $\Omega$ and $\Omega'\subset\Omega$ be open sets, and $s\le 0$. If $u\in H^s(\Omega)$, then $\|u\|_{H^s(\Omega')}\to 0$ as the Lebesgue measure of $\Omega'$ tends to zero.
\end{lemma}

\begin{lemma}\label{Hsto3-H}\cite[Lemma~2.18]{MikJMAA2011}
Let  $\Omega_k\subset\Omega$ be a sequence of Lipschitz domains converging to a Lipschitz domain $\Omega$ and $-\ha< s<\ha$.
If $u\in H^s(\Omega)$ and $\tilde u_k=\s E^s u|_{\Omega_k}$, then  there exists a constant $C$ independent of $u$ and $k$ such that $\|\tilde u_k\|_{\s H^s(\Omega_k)}\le C\|u\|_{H^s(\Omega)}$ for all sufficiently large $k$.
\end{lemma}
\begin{remark}\label{R3.4-H} \cite[Remark~3.14]{MikJMAA2011}
If $s\in\R$, $-\ha<t<\ha$, and $A_*:H^s(\Omega)\to {H}^{t}(\Omega)$ is a linear continuous operator, then
$H^{s,t} (\Omega;A_*)=H^s(\Omega)$ by Theorem~\ref{ExtOper-H}.
\end{remark}

\begin{lemma}\cite[Lemma~3.5]{MikJMAA2011} Let $s\in\R$.
If a linear operator $A_*: H^s(\Omega)\to \D^*(\Omega)$ is
continuous,
then the space $H^{s,t} (\Omega;A_*)$ is complete for any $t\ge
-\ha$.
\end{lemma}

\noindent
{\bf Acknowledgement}\\
This research was supported by the  grant
EP/H020497/1: "Mathematical Analysis of Localized Boundary-Domain Integral Equations
 for Variable-Coefficient Boundary Value Problems" from the EPSRC, UK.



\end{document}